\newtheorem{cor}{Corollary}[section]
\newtheorem{cl}[cor]{Claim}
\newtheorem{lem}[cor]{Lemma}
\newtheorem{prop}[cor]{Proposition}
\newtheorem{theorr}{Theorem}
\newtheorem{propp}[theorr]{Proposition}
\title{\bf A modulation technique for the blow-up profile of the vector-valued semilinear wave equation}
 \author{\small Asma AZAIEZ\footnote{This author is supported by the ERC Advanced Grant no. 291214, BLOWDISOL.}  \\
\small Universit\'e de Cergy-Pontoise, \\
\small AGM, CNRS (UMR 8088), 95302, Cergy-Pontoise, France.\\
\small Hatem ZAAG\footnote{This author is supported by the ERC Advanced Grant no. 291214, BLOWDISOL. and by ANR project ANA\'E ref. ANR-13-BS01-0010-03.} \\
\small Universit\'e Paris 13, Sorbonne Paris Cit\'e \\
\small LAGA, CNRS (UMR 7539), F-93430, Villetaneuse, France.}
\begin{document}
\maketitle
\begin{abstract} We consider a vector-valued  blow-up solution with values in $\mathbb{R}^m$ for the semilinear wave equation with power nonlinearity in one space dimension (this is a system of PDEs). We first characterize all the solutions of the associated stationary problem as an m-parameter family. Then, we show that the solution in self-similar variables approaches some particular stationary one in the energy norm, in the non-characteristic cases. Our analysis is not just a simple adaptation of the already handled real or complex case. In particular, there is a new structure of the set a stationary solutions.
 \end{abstract}
{\bf Keywords}: Wave equation, blow-up profile, stationary solution, modulation theory,\\ vector valued PDE.

\noindent {\bf AMS classification}: 35L05, 35L81, 35B44, 35B40, 34K21.
\begin{flushright}
{\it Paper accepted for publication in Bull. Sci. math.\;\;\;\;\;\;}
\end{flushright}
\section{Introduction}

We consider the vector-valued semilinear wave equation
\begin{equation} \left\{
\begin{array}{l}
\displaystyle\partial^2_{t} u = \partial^2_{x} u+|u|^{p-1}u, \\
u(0)=u_{0} \mbox{ and }  u_{t}(0) = u_{1},
\end{array}
\right . \label{waveq}
\end{equation}
where here and all over the paper $|.|$ is the euclidian norm in $\mathbb{ R}^m$, $u(t):  x\in \mathbb{R}\to u(x,t) \in \mathbb{R}^m,\, m\ge 2,\,p>1$, $u_0 \in
H^1_{loc,u}$ and $ u_1\in L^2_{loc,u}$ with $||
v||^2_{L^2_{loc,u}}=\displaystyle\sup\limits_{a\in \mathbb{ R}}
\int_{|x-a|<1}|v(x)|^2 dx $ and $|| v||^2_{H^1_{loc,u}}=||
v||^2_{L^2_{loc,u}}+||  \nabla v||^2_{L^2_{loc,u}}\cdot$

\medskip
The Cauchy problem for equation (\ref{waveq}) in the space $H^1_{loc,u}\times L^2_{loc,u}$ follows from the finite speed of propagation and the wellposedness in $H^1\times L^2$. See for instance Ginibre, Soffer and Velo \cite{MR1190421}, Ginibre and Velo \cite{MR1256167}, Lindblad and Sogge \cite{MR1335386} (for the local in time wellposedness in $H^1\times L^2$). Existence of blow-up solutions follows from ODE techniques or the energy-based blow-up criterion of \cite{l74}.
 More blow-up results can be found in Caffarelli and Friedman \cite{MR849476}, Alinhac \cite{ali95} and \cite{MR1968197}, Kichenassamy and Littman \cite{KL93a}, \cite{KL93b} Shatah and Struwe \cite{MR1674843}). 
 \medskip
 
 The real case (in one space dimension) has been understood completely, in a series of papers by Merle and Zaag \cite{MR2362418}, \cite{MR2415473}, \cite{MR2931219} and \cite{Mz12} and in C\^ote and Zaag \cite{CZ12} (see also the note \cite{Mz10}). Recently, the authors give an  extension to higher dimensions in \cite{MZ13} and \cite{MZ14}, where the blow-up behavior is given, together with some stability results.

For other types of nonlinearities, we mention the recent contribution of Azaiez, Masmoudi and Zaag in \cite{3}, where we study the semilinear wave equation with exponential nonlinearity, in particular we give the blow-up rate with some estimations.

In \cite{<3}, we consider the complex-valued solution of (\ref{waveq}) (or $\mathbb{R}^2$-valued solution), characterize all stationary solutions and give a trapping result. The main obstruction in extending those results to the vector case $m\ge 3$ was the question of classification of all self similar solutions of (\ref{waveq}) in the energy space. In this paper we solve that problem and show that the real valued and complex valued classification also hold in the vector-valued case $m\ge 3$ (see Proposition \ref{p1} below), with an adequate choice in $S^{m-1}$. This is in fact our main contribution in this paper, and it allows us to generalize the results of the complex case to the vector valued case $m\ge 3$. In this paper, we aim at proving similar results for the general case $u(x,t) \in \mathbb{R}^m,$ for $m\ge 3$.

Let us first introduce some notations before stating our results.

\medskip
If \emph{u} is a blow-up solution of (\ref{waveq}), we define (see
for example Alinhac \cite{ali95}) a continuous curve $\Gamma$ as the graph of a function ${x \rightarrow T(x)}$ such that the domain of definition of $u$ (or the maximal influence domain of $u$) is 
\begin{equation*}\label{domaine-de-definition}
D_u=\{(x,t)| t<T(x)\}.
\end{equation*}

From the finite speed of propagation, $T$ is a 1-Lipschitz function. The time $\bar {T}=\inf_{x \in \Bbb R}T(x)$ and the graph $\Gamma$  are called (respectively) the blow-up time and the blow-up graph of $u$.

Let us introduce the following non-degeneracy condition for $\Gamma$. If we introduce for all $x \in \Bbb{R},$ $t\le T(x)$ and $\delta>0$, the cone
\begin{equation*}
 \mathcal{C}_{x,t, \delta }=\{(\xi,\tau)\neq (x,t)\,| 0\le \tau\le t-\delta |\xi-x|\},
\end{equation*}
then our non-degeneracy condition is the following: $x_0$ is a non-characteristic point if
\begin{equation}\label{4}
 \exists \delta = \delta (x_0) \in (0,1) \mbox{ such that } u \mbox{ is defined on }\mathcal{C}_{x_0,T(x_0), \delta_0}.
\end{equation}
If condition (\ref{4}) is not true, then we call $x_0$ a characteristic point. Already when $u$ is real-valued, we know from \cite{MR2931219} and \cite{CZ12} that there exist blow-up solutions with characteristic points.

\medskip

Given some $x_0 \in \Bbb R,$ we introduce the following self-similar change of variables:
\begin{equation}\label{trans_auto}
w_{x_0}(y,s) =(T(x_0)-t)^\frac{2}{p-1}u(x,t), \quad  y=\frac{x-x_0}{T(x_0)-t}, \quad
s=-\log(T(x_0)-t).
\end{equation}
This change of variables transforms the backward light cone with vertex $(x_0, T(x_0))$ into the infinite cylinder $(y,s)\in (-1,1) \times [-\log T(x_0),+\infty).$ The function $w_{x_0}$ (we write $w$ for simplicity) satisfies the following equation for all $|y|<1$ and $s\ge -\log T(x_0)$:
\begin{eqnarray} \label{equa}
 \partial^2_{s} w=\mathcal{L}w-\frac{2(p+1)}{(p-1)^2}w+|w|^{p-1}w-\frac{p+3}{p-1} \partial_s w- 2 y \partial_{ys} w
 \end{eqnarray}
 \begin{eqnarray}\mbox{where }\mathcal{L} w=\frac{1}{\rho}\partial_y (\rho (1-y^2)\partial_y w)\, \mbox{ and }\, \rho (y)= (1-y^2)^\frac{2}{p-1}.\label{8}
\end{eqnarray}
This equation will be studied in the space
 \begin{eqnarray}\label{9}
\mathcal{ H}=\{ q \in H_{loc}^1 \times L_{loc}^2 ((-1,1),\mathbb{R}^m) \, \Big| \parallel  q \parallel_{\mathcal{ H}}^2 \equiv \int_{-1}^{1}(|q_1|^2+|q'_1|^2(1-y^2)+|q_2|^2)\rho\;dy< +\infty \} ,
 \end{eqnarray}
which is the energy space for $w$. Note that $\mathcal{ H}=\mathcal{ H}_0\times L_{\rho}^2$ where
 \begin{eqnarray}\label{10}
\mathcal{ H}_0=\{ r \in H_{loc}^1  ((-1,1),\mathbb{R}^m) \,\Big| \parallel  r \parallel_{\mathcal{ H}_0}^2 \equiv \int_{-1}^{1}(|r'|^2(1-y^2)+|r|^2)\rho\;dy< +\infty\} .
 \end{eqnarray}
In some places in our proof and when this is natural, the notation $\mathcal{ H}$, $\mathcal{ H}_0$ and $L_{\rho}^2$ may stand for real-valued spaces.
Let us define 
\begin{equation}\label{15}
 E(w,\partial_s w)=\int_{-1}^{1} \left( \frac{1}{2} |\partial_s w|^2+\frac{1}{2} |\partial_y w|^2 (1-y^2)+\frac{p+1}{(p-1)^2}|w|^2-\frac{1}{p+1}|w|^{p+1}\right) \rho dy.
\end{equation}
By the argument of Antonini and Merle \cite{AM01}, which works straightforwardly in the vector-valued case, we see that $E$ is a Lyapunov functional for equation (\ref{equa}).

\subsection{Blow-up rate}

Only in this subsection, the space dimension will be extended to any $N \ge 1$. We assume in addition that $p$ is conformal or sub-conformal:
\begin{equation*}
 1<p\le p_c \equiv 1+\frac{4}{N-1}.
\end{equation*}
We recall that for the real case of equation (\ref{waveq}), Merle and Zaag determined in \cite{MZ05} and \cite{MZ2005} the blow-up rate for (\ref{waveq}) in the region $\{(x,t)\,|\, t<\bar T\}$ in a first step. Then in \cite{MR2147056}, they extended their result to the whole domain of definition $\{(x,t)\,|\, t< T(x)\}$. In fact, the proof of \cite{MZ05}, \cite{MZ2005} and \cite{MR2147056} is valid for vector-valued solutions, since the energy structure (see(\ref{15})), which is the main ingredient of the proof, is preserved. This is the growth estimate near the blow-up surface for solutions of equation (\ref{waveq}).
\begin{propp}\label{Th}{\bf (Growth estimate near the blow-up surface for solutions of equation (\ref{waveq}))}
If $u$ is a solution of (\ref{waveq}) with blow-up surface $\Gamma \,:\, \{x\rightarrow T(x)\},$ and if $x_0\in \mathbb{R}^N$ is non-characteristic (in the sense (\ref{4})) then,\\
\item{(i)} {\bf (Uniform bounds on $w$)} For all $s\ge -\log \frac{T(x_0)}{4}$:
$$||w_{x_0}(s)||_{H^1(B)}+||\partial_s w_{x_0}(s)||_{L^2(B)}\le K.$$
\item{(ii)} {\bf (Uniform bounds on $u$)} For all $t\in [\frac{3}{4}T(x_0),T(x_0))$:
\begin{align*}
 &(T(x_0)-t)^\frac{2}{p-1}\frac{||u(t)||_{L^2(B(x_0,T(x_0)-t))}}{T(x_0)-t)}\\
&+(T(x_0)-t)^{\frac{2}{p-1}+1}\left( \frac{||\partial_t u(t)||_{L^2(B(x_0,T(x_0)-t))}}{(T(x_0)-t)^{N/2}}+\frac{||\nabla u(t)||_{L^2(B(x_0,T(x_0)-t))}}{(T(x_0)-t)^{N/2}}\right) \le K,
\end{align*}
where the constant $K$ depends only on $N,\, p,$ and on an upper bound on $T(x_0),1/T(x_0)$, $\delta_0 (x_0)$ and the initial data in $H^1_{loc,u}\times L^2_{loc, u}$.
\end{propp}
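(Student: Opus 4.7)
The plan is to reduce the proposition to its real-valued counterpart proved by Merle and Zaag in \cite{MZ05}, \cite{MZ2005} and \cite{MR2147056}, and to argue that the only structural ingredient used there is the existence of a Lyapunov functional of the form (\ref{15}), which is insensitive to the vector-valued nature of the unknown. Statement (ii) is equivalent to statement (i) via the self-similar change of variables (\ref{trans_auto}), so I would first establish the uniform bounds on $w_{x_0}$ in the cylinder $(-1,1)\times[-\log T(x_0),+\infty)$, and then simply read off (ii) by undoing (\ref{trans_auto}).

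For (i), I would begin with the Lyapunov identity for $E(w,\partial_s w)$ under equation (\ref{equa}). A direct computation — already performed by Antonini and Merle \cite{AM01} in the scalar case, and valid here because the nonlinearity $|u|^{p-1}u$ and the kinetic terms are $O(m)$-invariant — yields a dissipation relation of the shape
\begin{equation*}
\frac{d}{ds}E(w,\partial_s w) \;=\; -\,\frac{4}{p-1}\int_{-1}^{1}\frac{|\partial_s w|^2}{1-y^2}\,\rho\,dy,
\end{equation*}
plus possibly a boundary contribution vanishing under the non-characteristic condition (\ref{4}). Integrating in $s$ gives a time-averaged control on $\partial_s w$ and, combined with the trivial upper bound $E(s)\le E(s_0)$, a signed control on $\int\bigl(\tfrac12|\partial_y w|^2(1-y^2)+\tfrac{p+1}{(p-1)^2}|w|^2-\tfrac{1}{p+1}|w|^{p+1}\bigr)\rho\,dy$.

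The core of the argument, following \cite{MZ05} and \cite{MZ2005}, is then to upgrade this signed estimate to a genuine upper bound on $\|w(s)\|_{H^1(B)}+\|\partial_s w(s)\|_{L^2(B)}$. This is where the conformal/sub-conformal hypothesis $p\le p_c=1+\frac{4}{N-1}$ is crucial: it allows one to dominate the supercritical term $\int|w|^{p+1}\rho\,dy$ by the quadratic part of the energy plus a controllable remainder, using Hardy--Sobolev type inequalities on $(-1,1)$ with weight $\rho$ together with a covering of truncated backward cones based on (\ref{4}). All of these ingredients involve only the scalar quantities $|w|$, $|\partial_y w|$, $|\partial_s w|$; replacing absolute values by the Euclidean norm on $\mathbb{R}^m$ leaves the inequalities untouched. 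Extending the bound from the central cone $\mathcal{C}_{x_0,T(x_0),\delta_0}$ to the full domain of definition, which was the content of \cite{MR2147056}, proceeds by a purely metric covering/compactness argument that again is insensitive to the target dimension.

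The main obstacle is not technical but bookkeeping: one must audit each step of the three Merle--Zaag papers to verify that no argument secretly exploits the sign of $u$ or a device that requires a scalar target (for instance, an application of the maximum principle to $u$ itself rather than to $|u|^2$). Once that audit is completed — and the authors' remark that "the energy structure is the main ingredient" suggests it is painless — the vector-valued proposition follows with exactly the same constants as in the scalar case, possibly with dimensional factors depending on $m$ absorbed into $K$.
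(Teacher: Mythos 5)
Your proposal matches the paper's own (very terse) treatment: the authors give no detailed proof of Proposition~1, but simply assert that the arguments of Merle--Zaag in \cite{MZ05}, \cite{MZ2005} and \cite{MR2147056} carry over verbatim because the energy structure (\ref{15}), and in particular the Antonini--Merle Lyapunov dissipation, depends only on the Euclidean norms $|w|$, $|\partial_y w|$, $|\partial_s w|$ and is therefore $O(m)$-invariant. Your elaboration of why this reduction is painless (dissipation identity, role of the conformal exponent, Hardy--Sobolev inequalities, covering arguments, all invariant under $|\cdot|\to$ Euclidean norm on $\mathbb{R}^m$) is consistent with and fills in the reasoning the paper leaves implicit.
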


\medskip
\subsection{Blow-up profile}
This result is our main novelty. In the following, we characterize the set of stationary solutions for vector-valued solutions.
\begin{propp}\label{p1}{\bf (Characterization of all stationary solutions of equation (\ref{equa}) in  $\mathcal{ H}_0$).}
 {\it(i) Consider $w \in \mathcal{H}_0$ a stationary solution of (\ref{equa}). Then, either $w\equiv 0$ or there exist $d\in(-1,1)$ and $\Omega \in \mathbb{S}^{m-1}$ such that $w(y)=\Omega \kappa (d,y)$ where
\begin{eqnarray}\label{defk}
\forall (d, y) \in (-1,1)^2,\;\kappa(d,y)= \kappa_0 \frac{(1-d^2)^\frac{1}{p-1}}{(1+dy)^\frac{2}{p-1}} \mbox{ and }\kappa_0=\left(\frac{2(p+1)}{(p-1)^2}\right)^\frac{1}{p-1}.
\end{eqnarray}}\\
(ii) It holds that
\begin{equation}\label{14}
 E(0,0)=0 \mbox{ and }\forall d \in (-1,1),\,\forall \Omega \in \mathbb{S}^{m-1} ,\, E( \kappa (d,.)\Omega,0)=E(\kappa_0,0)>0
\end{equation}
where $E$ is given by (\ref{15}).
\end{propp}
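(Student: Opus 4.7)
The plan is to work with the stationary equation obtained from (\ref{equa}) by imposing $\partial_s w = 0$, namely
$$\frac{1}{\rho}\bigl(\rho(1-y^2)\, w'\bigr)' - \frac{2(p+1)}{(p-1)^2}\, w + |w|^{p-1} w = 0,$$
and to exploit the observation that in the target $\mathbb{R}^m$ both the mass term and the nonlinearity are collinear with $w$. I take the exterior product of this equation with $w$: since $v \wedge v = 0$, the mass and power terms drop out, while $\mathcal{L}w \wedge w$ rearranges as a total derivative via $w' \wedge w' = 0$. This yields the conservation law
$$\rho(y)(1-y^2)\, w'(y) \wedge w(y) = C \in \Lambda^2 \mathbb{R}^m$$
for some constant bivector $C$.

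The key step is to show $C = 0$. For each $y$, $w'(y)\wedge w(y)$ is a simple bivector, so if $C \neq 0$ then $C$ itself is simple and the associated 2-plane $P \subset \mathbb{R}^m$ is independent of $y$; in particular $w(y) \in P$ for every $y$. The equation is invariant under rotations of the target, so after identifying $P$ isometrically with $\mathbb{R}^2 \cong \mathbb{C}$, $w$ becomes a complex-valued stationary solution lying in $\mathcal{H}_0$, and the classification from \cite{<3} forces $w \equiv 0$ or $w = e^{i\theta_0}\kappa(d,\cdot)$. Each of these satisfies $w\wedge w' \equiv 0$, contradicting $C \neq 0$.

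Once $w\wedge w' \equiv 0$, on the open set $\{w \neq 0\}$ the direction $\Omega(y) = w(y)/|w(y)|$ has vanishing derivative and is therefore locally constant. A short ODE-uniqueness argument shows that a simultaneous zero of $w$ and $w'$ forces $w \equiv 0$, while an isolated zero of $w$ alone at most flips the sign of $\Omega$; I can therefore write $w = f\,\Omega$ globally with a fixed $\Omega \in \mathbb{S}^{m-1}$ and a scalar $f \in \mathcal{H}_0$. The function $f$ now solves the real-valued stationary equation, and its $\mathcal{H}_0$ classification from the Merle--Zaag scalar theory gives $f \equiv 0$ or $f = \pm \kappa(d,\cdot)$ for some $d \in (-1,1)$; absorbing the sign into $\Omega$ yields assertion~(i).

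For~(ii), $E(0,0)=0$ is immediate from (\ref{15}). For $w = \kappa(d,\cdot)\Omega$ with $|\Omega| = 1$, one has $|w| = \kappa(d,\cdot)$ and $|\partial_y w| = |\partial_y \kappa(d,\cdot)|$, so the vector energy reduces to the scalar one $E(\kappa(d,\cdot),0)$. The $d$-independence then follows either by direct computation using $\kappa_0^{p-1} = 2(p+1)/(p-1)^2$ or, more conceptually, from the Lorentz-type symmetry of (\ref{equa}) that maps $\kappa(d,\cdot)$ to $\kappa_0$ while preserving $E$; positivity reduces to $E(\kappa_0,0) = \frac{\kappa_0^2}{p-1}\int_{-1}^1 \rho\,dy > 0$. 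The main obstacle is Step~2 — forcing the conserved bivector to vanish — and the elegant reduction to the already-classified complex case is what avoids a delicate direct analysis of the boundary behaviour of $\rho(1-y^2)\, w'\wedge w$.
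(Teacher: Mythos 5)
Your proof is correct, and it takes a genuinely different route from the paper for the crucial step, namely the vanishing of the conserved ``angular momentum.'' Both arguments in fact rest on the \emph{same} conservation law: after the change of variables $y=\tanh\xi$, $\bar w(\xi)=(1-y^2)^{1/(p-1)}w(y)$ used in the paper, one computes $\bar w'\wedge\bar w=(1-y^2)^{\frac{2}{p-1}+1}\,w'\wedge w$, so your bivector $C=\rho(y)(1-y^2)\,w'\wedge w$ is exactly $\bar w'\wedge\bar w$, and the paper's scalar $\mu=\rho^4H$ (with $\bar w=\rho\Omega$, $H=|\Omega'|^2$) is just $|C|^2$. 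Where the two proofs part ways is in showing this constant is zero. The paper argues directly: if $\mu\neq0$, the conserved ODE energy gives $F(|\bar w|)\ge 0$ with $F(r)=\frac{\mu}{2r^2}+\frac{c_0}{2}r^2-\frac{r^{p+1}}{p+1}+\mathcal{E}(0)$; since $F\to-\infty$ both as $r\to0$ (because $\mu\neq0$) and as $r\to\infty$, this traps $|\bar w|$ between two positive constants (Lemma~\ref{tunisie}), contradicting $\bar w\in H^1(\mathbb{R})$. You instead note that a nonzero $C$ is a simple bivector whose plane is $y$-independent, so $w$ is valued in a fixed $2$-plane and hence, by the complex ($m=2$) classification of \cite{<3}, must have $w'\wedge w\equiv 0$ --- a contradiction. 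Your reduction is shorter and more conceptual, and the wedge-product formulation is cleaner than the paper's polar decomposition and interval-propagation through the sets $G^*$ and $G$; on the other hand it treats the complex classification as a black box, whereas the paper's potential argument is self-contained and would establish the $m=2$ case from scratch. One point you should make explicit: the assertion that a common zero of $w$ and $w'$ forces $w\equiv 0$ does not follow from Cauchy--Lipschitz when $p<2$ (the nonlinearity is not Lipschitz at the origin); the clean route is the conserved quantity $\frac12|\bar w'|^2-\frac{c_0}{2}|\bar w|^2+\frac{|\bar w|^{p+1}}{p+1}$ in the $\xi$-variable, which vanishes at a double zero and yields $|\bar w|'\le\sqrt{c_0+o(1)}\,|\bar w|$, hence $\bar w\equiv0$ by Gronwall. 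The paper avoids this by never leaving the open set where $\bar w\neq 0$.
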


Thanks to the existence of the Lyapunov functional $E(w,\partial_s w)$ defined in (\ref{15}), we show that when $x_0$ is non-characteristic, then $w_{x_0}$ approaches the set of non-zero stationary solutions:
\begin{propp}\label{2}{\bf (Approaching the set of non-zero stationary solutions near a non-characteristic point)}
 Consider $u$ a solution of (\ref{waveq}) with blow-up curve $\Gamma :\{ x \rightarrow T(x)\}.$ If $x_0 \in \Bbb R$ is non-characteristic, then:
\item{(A.i)} $\inf_{\{\Omega \in {\Bbb S}^{m-1},\; |d|<1\}}||w_{x_0}(.,s)- \kappa(d,.)\Omega||_{H^1(-1,1)}+||\partial_s w_{x_0}||_{L^2(-1,1)} \rightarrow  0$ as $s  \rightarrow \infty$.
\item{(A.ii)} $E(w_{x_0}(s),\partial_s w_{x_0}(s))\rightarrow E(\kappa_0,0)$ as $s \rightarrow \infty.$
\end{propp}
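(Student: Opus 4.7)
The plan is to follow the Merle--Zaag strategy developed for the real case in \cite{MR2362418} and for the complex case in \cite{<3}, using as a key new input the classification Proposition \ref{p1} of stationary solutions that the paper has just established. The vector-valued structure of (\ref{equa}) does not affect the energy dissipation argument (since the nonlinearity $|u|^{p-1}u$ derives from a radial potential), so I expect the Lyapunov machinery to carry over componentwise.

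\textbf{Step 1 (Lyapunov functional and dissipation).} Multiplying (\ref{equa}) by $\rho\, \partial_s w$ (in the sense of the Euclidean inner product in $\mathbb{R}^m$) and integrating on $(-1,1)$, the same Antonini--Merle computation gives
\[
\frac{d}{ds} E(w,\partial_s w) = -\frac{4}{p-1}\int_{-1}^{1} \frac{|\partial_s w|^2}{1-y^2}\,\rho\, dy.
\]
By Proposition \ref{Th}(i), $E(w(s),\partial_s w(s))$ is bounded on $[-\log T(x_0),\infty)$, hence it converges to some limit $E_\infty$ as $s\to\infty$, and the integrated dissipation $\int^{\infty} \int_{-1}^{1}\frac{|\partial_s w|^2}{1-y^2}\rho\, dy\, ds$ is finite.

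\textbf{Step 2 (Extraction of a stationary limit).} For any sequence $s_n\to\infty$, set $W_n(y,\sigma)=w_{x_0}(y,s_n+\sigma)$ on $\sigma\in[0,1]$. Proposition \ref{Th}(i) gives uniform bounds on $W_n$ in $L^\infty([0,1];\mathcal{H}_0)$ and on $\partial_\sigma W_n$ in $L^2([0,1];L^2_\rho)$, while Step 1 forces $\partial_\sigma W_n\to 0$ in $L^2([0,1]; L^2_\rho)$. Up to a subsequence, $W_n$ converges weakly in $H^1$ and strongly in $L^{p+1}_\rho$ (Rellich in $\mathcal{H}_0$, which injects compactly into $L^{p+1}_\rho$ for $1<p\le p_c$) to a $\sigma$-independent limit $w^*\in\mathcal{H}_0$. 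Passing to the limit in (\ref{equa}) in the distributional sense, $w^*$ is a stationary solution of (\ref{equa}). By Proposition \ref{p1}, either $w^*\equiv 0$ or $w^*=\Omega\kappa(d,\cdot)$ for some $(d,\Omega)\in(-1,1)\times\mathbb{S}^{m-1}$.

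\textbf{Step 3 (Ruling out $w^*\equiv 0$).} This is the main obstacle. The argument follows the real/complex cases: define the auxiliary functionals
\[
H(s)=\int_{-1}^{1}|w(y,s)|^2\rho\,dy,\qquad I(s)=\int_{-1}^{1} w\cdot\partial_s w\,\rho\,dy,
\]
and derive, using (\ref{equa}), an inequality of Bernoulli/Gronwall type that produces a strictly positive lower bound $\liminf_{s\to\infty}\|w(s)\|_{L^{p+1}_\rho}\ge c>0$ at every non-characteristic point (this lower bound is precisely what encodes the blow-up at $x_0$ and has already been established in the scalar and complex settings; only the scalar product in $\mathbb{R}^m$ needs to replace the real product, so the computation transfers). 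This excludes the zero limit.

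\textbf{Step 4 (Conclusion).} By Steps 2--3, every subsequential limit of $w_{x_0}(s_n)$ in the above weak sense is a non-zero stationary solution, hence by part (ii) of Proposition \ref{p1} satisfies $E(w^*,0)=E(\kappa_0,0)$. Combined with $E(w(s),\partial_s w(s))\to E_\infty$ from Step 1, this identifies $E_\infty=E(\kappa_0,0)$, which is (A.ii). Using this energy identity, the weak convergence $W_n\to w^*$ improves to strong convergence in $\mathcal{H}$ (the kinetic part vanishes by the dissipation bound, and the potential and gradient parts converge via the identity combined with the $L^{p+1}_\rho$ strong convergence). Since this argument applies to every sequence $s_n\to\infty$, a standard subsubsequence argument yields
\[
\inf_{\Omega\in\mathbb{S}^{m-1},\,|d|<1}\bigl\|w_{x_0}(\cdot,s)-\kappa(d,\cdot)\Omega\bigr\|_{H^1(-1,1)}+\|\partial_s w_{x_0}(\cdot,s)\|_{L^2(-1,1)}\longrightarrow 0,
\]
which is (A.i).
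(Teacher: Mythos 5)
The paper does not prove Proposition~\ref{2}; it states that Propositions~\ref{Th}, \ref{2} and Theorem~\ref{theo4} ``can be generalized from the real case treated in \cite{MR2362418} without any difficulty,'' with Proposition~\ref{p1} supplying the new classification needed to identify the limiting stationary solutions. Your sketch reproduces exactly that Merle--Zaag route (Antonini--Merle dissipation, compactness extraction of a stationary limit, blow-up criterion excluding the zero limit, and Proposition~\ref{p1} to conclude), so it matches the paper's intended argument.
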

We write the fundamental theorem of our paper:

\medskip
\begin{theorr}\label{theo3}{\bf (Trapping near the set of non-zero stationary solutions of (\ref{equa}))} 
There exist positive $\epsilon_0$, $\mu_0$ and $C_0$ such that if $w\in C([s^*,\infty),\mathcal{H})$ for some $s^*\in \mathbb{R}$ is a solution of equation (\ref{equa}) such that
 \begin{eqnarray} \label{17}
  \forall s \ge s^*, E(w(s),\partial_s w(s)) \ge E(\kappa_0,0),
 \end{eqnarray}
and
 \begin{eqnarray}\label{18}
\Big|\Big|\begin{pmatrix} w(s^*)\\\partial_s w(s^*) \end{pmatrix} -\begin{pmatrix} \kappa(d^*,.)\Omega^*\\0\end{pmatrix} \Big|\Big|_{\mathcal{ H}}\le \epsilon^*
 \end{eqnarray}
for some $d^* \in (-1,1), \Omega^* \in \mathbb{S}^{m-1}$ and $\epsilon^*\in (0,\epsilon_0]$, 
then there exists $d_{\infty} \in (-1,1)$ and $\Omega^\infty\in \mathbb{S}^{m-1}$ such that
$$|\arg\tanh d_{\infty}-\arg \tanh d^*|+|\Omega_\infty-\Omega^*| \le C_0 \epsilon^* $$
and for all $s\ge s^*$:
 \begin{eqnarray}\label{19}
\Big|\Big|\begin{pmatrix} w(s)\\\partial_s w(s) \end{pmatrix} -\begin{pmatrix} \kappa(d_\infty,.)\Omega_{\infty}\\0\end{pmatrix} \Big|\Big|_{\mathcal{ H}}\le C_0\epsilon^* e^{-\mu_0(s-s^*)}.
 \end{eqnarray}
\end{theorr}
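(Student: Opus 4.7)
The strategy is a modulation-and-trapping argument in the spirit of the complex case treated in \cite{<3}, adapted to the richer $O(m)$ symmetry. Using (\ref{18}) as an initial closeness, I would first decompose the solution along the stationary manifold
$$ \mathcal{M} := \{ \kappa(d,\cdot)\Omega \,:\, d \in (-1,1),\ \Omega \in \mathbb{S}^{m-1} \}, $$
which is $m$-dimensional (one direction along $d$, $m-1$ along the sphere). Writing $w(s) = \kappa(d(s),\cdot)\Omega(s) + q_1(s)$ and $q_2(s) = \partial_s w(s) - \partial_s[\kappa(d(s),\cdot)\Omega(s)]$, I select $(d(s),\Omega(s))$ by imposing the $m$ orthogonality conditions
$$ \langle q_1(s),\, \partial_d \kappa(d(s),\cdot)\Omega(s)\rangle_{L^2_\rho} = 0, \qquad \langle q_1(s),\, \kappa(d(s),\cdot)\, e_j\rangle_{L^2_\rho} = 0,\quad j=1,\dots,m-1, $$
where $\{e_j\}$ is an orthonormal frame of $T_{\Omega(s)}\mathbb{S}^{m-1}$. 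An implicit function theorem argument on a small $\mathcal{H}$-neighborhood of $\mathcal{M}$ then yields local existence and smoothness of $(d(s),\Omega(s))$, and an evolution equation for $q=(q_1,q_2)$ of the form $\partial_s q = L_{d,\Omega}\, q + R(q) + \textrm{modulation terms}$.

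Next I would analyse $L_{d,\Omega}$ spectrally. Decomposing every function $f:(-1,1)\to \mathbb{R}^m$ pointwise into its component along $\Omega(s)$ and its component in $\Omega(s)^\perp$, the linearization of (\ref{equa}) around $\kappa(d)\Omega$ splits into a radial block, identical to the scalar operator of the real case analyzed in \cite{MR2362418}, whose spectrum contains a unique positive eigenvalue $\lambda=1$ together with a simple zero mode generated by $\partial_d \kappa$; and $m-1$ identical tangential blocks, each non-positive with a one-dimensional kernel spanned by $\kappa(d) e_j$ (coming from the spherical symmetry). The $m$ modulation conditions above are tailored precisely to kill these $m$ neutral modes. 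Since Proposition \ref{p1}(ii) gives $E(\kappa(d)\Omega,0) = E(\kappa_0,0)$ independent of $(d,\Omega)$, expanding the Lyapunov functional around the modulated ansatz produces
$$ E(w,\partial_s w) - E(\kappa_0,0) = \Phi_{d,\Omega}(q) + O(\|q\|_{\mathcal{H}}^3), $$
with $\Phi_{d,\Omega}$ quadratic. The main analytic task is to show that $\Phi_{d,\Omega}$ is coercive on the subspace cut out by the orthogonality conditions, modulo the single unstable direction, with a constant uniform in $(d,\Omega)$ on compact subsets of $(-1,1)\times \mathbb{S}^{m-1}$.

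Combining this coercivity with the Antonini--Merle monotonicity of $E$ and the lower bound (\ref{17}), one controls the component $\alpha(s)$ of $q(s)$ along the unstable eigenvector by a sign argument: $\alpha(s)$ satisfies a scalar ODE whose unbounded growth would drive $E(w,\partial_s w)$ below $E(\kappa_0,0)$ in finite time, contradicting (\ref{17}); hence $\alpha(s)$ must remain of the same order as the other components. One then derives a differential inequality of the form
$$ \frac{d}{ds}\|q(s)\|_{\mathcal{H}}^2 \le -2\mu_0 \|q(s)\|_{\mathcal{H}}^2 + \textrm{lower order terms}, $$
whence $\|q(s)\|_{\mathcal{H}} \le C_0 \epsilon^*\, e^{-\mu_0(s-s^*)}$. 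Finally, the modulation equations yield $|d'(s)| + |\Omega'(s)| \lesssim \|q(s)\|_{\mathcal{H}}$, and integrating gives the convergence of $(d(s),\Omega(s))$ to some $(d_\infty,\Omega_\infty)$ satisfying the stated error bound, with $d$ measured through $\arg\tanh$ to accommodate the degeneracy as $|d|\to 1$. The principal obstacle is establishing the uniform coercivity of $\Phi_{d,\Omega}$; once this is secured, the $O(m-1)$-symmetry within $T_\Omega \mathbb{S}^{m-1}$ decouples the $m-1$ tangential blocks and reduces the vector-valued trapping to the scalar mechanism of \cite{MR2362418} and \cite{<3}, so that the passage from $m=2$ to $m\ge 3$ is structural rather than genuinely new.
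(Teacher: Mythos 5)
Your outline follows the same overall strategy as the paper (modulation along the stationary manifold, splitting the linearization into one radial block and $m-1$ tangential blocks, an energy barrier controlling the unstable mode, then exponential decay), but it differs in two technical choices and contains one genuine gap.

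On the parametrization: you modulate by $(d(s),\Omega(s))$ intrinsically with a moving frame $\{e_j\}$ on $T_{\Omega(s)}\mathbb{S}^{m-1}$, whereas the paper fixes concrete coordinates by writing $\Omega = R_\theta e_1$ with $R_\theta = R_2\cdots R_m$ a product of Givens rotations and $\theta=(\theta_2,\dots,\theta_m)$, and then works with the matrices $A_j = R_\theta^{-1}\partial_{\theta_j}R_\theta$. These are equivalent parametrizations of the same $m$-dimensional manifold, but the paper's explicit choice is what makes the key modulation estimates computable: Lemma \ref{prod} (that $A_ie_1$ is proportional to $e_i$ and that $A_i$ is a contraction) is proved by explicit induction on the matrix entries, and it is exactly this lemma that makes the Jacobian of the modulation map $\Phi$ nondegenerate and the coefficients in the modulation ODEs controllable. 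If you take the intrinsic route you would still need such orthogonality and boundedness properties of the frame derivatives, and they would have to be supplied rather than assumed.

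On the orthogonality conditions: you impose $L^2_\rho$-orthogonality of $q_1$ alone to $\partial_d\kappa\,\Omega$ and $\kappa\,e_j$. The paper instead imposes $\bar\pi_0^{d}(q_{1,1},q_{2,1})=0$ and $\tilde\pi_0^{d}(q_{1,j},q_{2,j})=0$, i.e.\ orthogonality in the $\mathcal{H}$-inner product $\phi$ to the \emph{adjoint} eigenfunctions $\bar W_0^d$ and $\tilde W_0^d$, which have nontrivial second components. This matters: the spectral projectors $\bar\pi_\lambda^d$, $\tilde\pi_0^d$ built from the adjoint eigenfunctions satisfy $\bar\pi_\lambda^d(\bar L_d q)=\lambda\,\bar\pi_\lambda^d(q)$, so they diagonalize the first-order-in-time evolution and make the equations for $\alpha_{1,1}$, $\alpha_{-}$ cleanly decoupled (Proposition \ref{5.2}). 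Your conditions would still give an implicit-function-theorem solvability (same count of parameters versus constraints), but they are not spectral projections, so the dynamics along and transverse to the manifold would not decouple at leading order and you would have to redo the spectral bookkeeping to control the cross terms.

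The genuine gap is the claimed differential inequality $\tfrac{d}{ds}\|q(s)\|^2_{\mathcal{H}}\le -2\mu_0\|q(s)\|^2_{\mathcal{H}}+\text{l.o.t.}$ The raw $\mathcal{H}$-norm is not a Lyapunov functional here: the linearized operator is not symmetric or dissipative, has a genuine unstable eigenvalue $\lambda=1$, and the quadratic energy form $\bar\varphi_d$ is only coercive on the stable subspace. After killing the zero modes by modulation, the decay is obtained by a considerably more delicate construction (Corollary \ref{5.5} and Claims \ref{5.6}--\ref{5.7}): a modified Lyapunov functional $f(s)=b(s)+\eta_6\int_{-1}^1(q_{1,1}q_{2,1}+\sum_j q_{1,j}q_{2,j})\rho\,dy$ which does satisfy $f'\le -2\mu_6 f$ while the unstable component $a(s)$ is dominated by $b(s)$, and a separate contradiction argument (using the energy barrier $a\le K_5 b$) showing that once $a$ starts to dominate the solution is in fact zero. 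Writing the argument as a single Gronwall inequality for $\|q\|^2_{\mathcal{H}}$ skips the place where the real work is done, namely building a functional that compensates for the nonsymmetric dissipation $-\tfrac{p+3}{p-1}q_2-2y\partial_yq_2$ and navigates the unstable mode. Your energy-barrier idea is the right one, but as stated it needs to be turned into the quantitative inequality $\alpha_{1,1}\le C_0\alpha_{-,1}+C_1\sum_j\alpha_{-,j}$ that the paper derives from the expansion of $E$ together with (\ref{17}), and then combined with the cross-term functional, not with the $\mathcal{H}$-norm directly.
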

Combining Proposition \ref{2} and Theorem \ref{theo3}, we derive the existence  of a blow-up profile near non-characteristic points in the following:

\begin{theorr}\label{theo4}{\bf (Blow-up profile near a non-characteristic point)}
 If $u$ a solution of (\ref{waveq}) with blow-up curve $\Gamma :\{ x \rightarrow T(x)\}$ and $x_0 \in \Bbb R$ is non-characteristic (in the sense (\ref{4})), then there exist $d_\infty (x_0) \in (-1,1),\, \Omega^\infty(x_0)\in \mathbb{S}^{m-1}$ and $s^*(x_0) \ge -\log T(x_0)$ such that for all $s\ge s^*(x_0),$ (\ref{19}) holds with $\epsilon^*=\epsilon_0$, where $C_0$ and $\epsilon_0$ are given in Theorem \ref{theo3}. Moreover,
 \begin{equation*}
||w_{x_0}(s)- \kappa(d_\infty (x_0))\Omega^\infty(x_0)||_{H^1(-1,1)}+||\partial_s w_{x_0}(s)||_{L^2(-1,1)} \rightarrow  0 \mbox{ as } s  \rightarrow \infty.
  \end{equation*}
\end{theorr}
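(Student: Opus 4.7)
The plan is to obtain Theorem \ref{theo4} as a direct consequence of Proposition \ref{2} and Theorem \ref{theo3}: first we use Proposition \ref{2} to place the trajectory $(w_{x_0},\partial_s w_{x_0})$ inside the small neighborhood where the trapping theorem applies, then we invoke Theorem \ref{theo3} to obtain the quantitative exponential convergence, and finally the last convergence claim falls out by letting $s\to\infty$ in \eqref{19}.

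\textbf{Step 1 (choosing $s^*(x_0)$ from Proposition \ref{2}).} Fix $\epsilon_0$ and $C_0$ as provided by Theorem \ref{theo3}. By Proposition \ref{2} (A.i), the infimum $\inf_{\Omega\in \mathbb{S}^{m-1},\,|d|<1}\|w_{x_0}(\cdot,s)-\kappa(d,\cdot)\Omega\|_{H^1(-1,1)}+\|\partial_s w_{x_0}(\cdot,s)\|_{L^2(-1,1)}$ tends to $0$ as $s\to\infty$. Hence there exists $s^*(x_0)\ge -\log T(x_0)$ and a pair $(d^*,\Omega^*)\in (-1,1)\times \mathbb{S}^{m-1}$ such that \eqref{18} holds with $\epsilon^*=\epsilon_0$ at the time $s=s^*(x_0)$, after identifying the norm in $\mathcal{H}$ from \eqref{9} with the natural $H^1\times L^2$ norm on $(-1,1)$ up to a bounded weight.

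\textbf{Step 2 (verifying the energy lower bound \eqref{17}).} Since $E(w,\partial_s w)$ defined in \eqref{15} is a Lyapunov functional for equation \eqref{equa} (as noted after \eqref{15}, following Antonini--Merle \cite{AM01}), the map $s\mapsto E(w_{x_0}(s),\partial_s w_{x_0}(s))$ is non-increasing. Combined with Proposition \ref{2} (A.ii), which asserts $E(w_{x_0}(s),\partial_s w_{x_0}(s))\to E(\kappa_0,0)$ as $s\to\infty$, this monotonicity forces
\[
\forall s\ge s^*(x_0),\qquad E(w_{x_0}(s),\partial_s w_{x_0}(s))\ge E(\kappa_0,0),
\]
which is exactly hypothesis \eqref{17} for the shifted trajectory $s\mapsto w_{x_0}(s)$ on $[s^*(x_0),\infty)$.

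\textbf{Step 3 (applying Theorem \ref{theo3} and concluding).} With \eqref{17} and \eqref{18} both satisfied at $s^*(x_0)$ with $\epsilon^*=\epsilon_0$, Theorem \ref{theo3} yields $d_\infty(x_0)\in(-1,1)$ and $\Omega^\infty(x_0)\in\mathbb{S}^{m-1}$ such that \eqref{19} holds for all $s\ge s^*(x_0)$. The final convergence statement then follows by sending $s\to\infty$ in \eqref{19}: the right-hand side $C_0\epsilon_0 e^{-\mu_0(s-s^*(x_0))}$ tends to $0$, hence the $\mathcal{H}$-norm of the difference between $(w_{x_0}(s),\partial_s w_{x_0}(s))$ and $(\kappa(d_\infty(x_0),\cdot)\Omega^\infty(x_0),0)$ tends to $0$, which is equivalent to the claimed convergence in $H^1\times L^2$ on $(-1,1)$ (using again the equivalence of norms on a bounded interval with the weighted norm of \eqref{9}).

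\textbf{Expected main difficulty.} None of the steps requires delicate new analysis; the only point that deserves care is the transition from Proposition \ref{2}, stated in the unweighted $H^1(-1,1)\times L^2(-1,1)$ norm, to the weighted $\mathcal{H}$-norm required as hypothesis in Theorem \ref{theo3}. Since the weight $\rho(y)=(1-y^2)^{2/(p-1)}$ is bounded on $(-1,1)$, one direction of comparison is immediate, so reaching the $\epsilon_0$-smallness required in \eqref{18} is not an obstruction; the converse passage at the end, from the $\mathcal{H}$-bound \eqref{19} to the plain $H^1\times L^2$ convergence on $(-1,1)$, is the only point that requires a short verification, and it reduces to standard weighted Sobolev embedding on $(-1,1)$ combined with the already-known uniform boundedness furnished by Proposition \ref{Th} (i).
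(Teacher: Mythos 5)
Your overall structure is exactly what the paper intends: the authors state only that Theorem~\ref{theo4} follows by ``combining Proposition~\ref{2} and Theorem~\ref{theo3}'' and defer the details to the real case in~\cite{MR2362418}, and your Steps~1 and~2 carry this out correctly. In particular, passing from (A.i) to the hypothesis~(\ref{18}) is harmless because the weights $\rho$ and $(1-y^2)\rho$ in~(\ref{9}) are bounded above, so $\|\cdot\|_{\mathcal{H}}\le C\|\cdot\|_{H^1\times L^2}$, and your Step~2 is a clean use of the Lyapunov monotonicity together with (A.ii).

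The genuine gap is in Step~3. You write that $\mathcal{H}$-convergence ``is equivalent to the claimed convergence in $H^1\times L^2$ on $(-1,1)$ (using\ldots the equivalence of norms on a bounded interval with the weighted norm of~(\ref{9}))''. This is false: the weights $\rho(y)=(1-y^2)^{2/(p-1)}$ and $(1-y^2)\rho(y)$ degenerate at $y=\pm1$, so $\|\cdot\|_{\mathcal{H}}$ is strictly weaker than $\|\cdot\|_{H^1(-1,1)\times L^2(-1,1)}$. Smallness in $\mathcal{H}$ allows for mass concentrating near $y=\pm1$ that is invisible in $\mathcal{H}$ yet carries $O(1)$ unweighted $L^2$ norm, and the uniform bound of Proposition~\ref{Th}(i) gives boundedness, not equi-integrability near the endpoints, so ``weighted Sobolev embedding $+$ uniform boundedness'' does not close the argument. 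The standard way to obtain the last display is to go back to Proposition~\ref{2}(A.i), which already gives convergence in the \emph{unweighted} $H^1(-1,1)\times L^2(-1,1)$ norm to the manifold $\{\kappa(d,\cdot)\Omega\}$, achieved along some $(d_s,\Omega_s)$. One then uses~(\ref{19}) only to identify the limit: comparing the two approximations on a compact subinterval of $(-1,1)$, where $\rho$ is bounded below, forces $(d_s,\Omega_s)\to(d_\infty,\Omega_\infty)$ (the map $(d,\Omega)\mapsto\kappa(d,\cdot)\Omega$ separates points on compacts), and the Lipschitz dependence $d\mapsto\kappa(d,\cdot)$ in $H^1(-1,1)$ (cf.~(\ref{avion})) then upgrades (A.i) to the stated unweighted convergence with the specific limit $(\kappa(d_\infty,\cdot)\Omega^\infty,0)$. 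This is the missing idea; once inserted, the rest of your argument stands.
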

\noindent{\bf Remark:} From the Sobolev embedding, we know that the convergence takes place also in $L^\infty$, in the sense that
$$||w_{x_0}(s)- \kappa(d_\infty (x_0))\Omega^\infty(x_0)||_{L^\infty(-1,1)} \rightarrow  0 \mbox{ as } s  \rightarrow \infty.$$

\bigskip

In this paper, we give the proofs of Proposition \ref{p1} and Theorem \ref{theo3}, which present the novelties of this work comparing with the handled real and complex cases,  since Propositions \ref{Th}, \ref{2} and Theorem \ref{theo4}, can be generalized from the real case treated in \cite{MR2362418} without any difficulty.\\

Let us remark that our paper is not a simple adaptation of the complex case. In fact, the vector-valued structure of our solution implies a new characterization of the set of stationary solutions in $\Bbb{R}^m$ (see Proposition \ref{p1} above).  In addition, in order to apply the modulation theory, we need more parameters, and for that, a suitable $ m\times m$ rotation matrix will be defined (see (\ref{matrice}) and (\ref{R_theta}) below; see the beginning of the proof of Proposition \ref{5.2} page \pageref{chdg} below), and we have to treat delicately the terms coming from the rotation matrix.\\

 This paper is organized as follows:\\
- In Section $2$, we give the proof of Proposition \ref{p1}.\\
- In Section $3$, we give the proof of Theorem \ref{theo3}.\\

\section{Characterization of the set of stationary solutions}
In this section, we prove Proposition \ref{p1} which characterizes all $\mathcal{H}_0$ solutions of
\begin{equation}
 \frac{1}{\rho} (\rho (1-y^2) w')'-\frac{2(p+1)}{(p-1)^2}w+|w|^{p-1}w=0,\label{47}
\end{equation}
the stationary version of (\ref{equa}). Note that since $0$ and $\kappa_0 \Omega$ are trivial solutions to equation (\ref{equa}) for any $\Omega  \in \Bbb S^{m-1}$, we see from a Lorentz transformation (see Lemma 2.6 page 54 in \cite{MR2362418})
 that $\mathcal{T}_d e^{i\theta}\kappa_0=\kappa(d,y)$ is also a stationary solution to (\ref{equa}). Let us introduce the set
\begin{equation}\label{48}
 S\equiv \{0, \kappa (d,.)\Omega, |d|<1, \Omega  \in \Bbb S^{m-1}\}.
\end{equation}
Now, we prove Proposition \ref{p1} which states that there are no more solutions of (\ref{47}) in $\mathcal{H}_0$ outside the set $S$.

\noindent 
We first prove $(ii)$, since its proof is short.\\
(ii) Since we clearly have from the definition (\ref{15}) that $E(0,0)=0$, we will compute $E(\Omega\kappa(d,.),0)$. 
From (\ref{15}) and the proof of the real case treated in page 59 in \cite{MR2362418}, we see that
$$ E(\kappa(d,.)\Omega,0)= E(\kappa(d,.),0)=E(\kappa_0,0)>0.$$
Thus, (\ref{14}) follows.\\
(i) Consider $w \in \mathcal{H}_0$ an $ \mathbb{R}^m$ non-zero solution of (\ref{47}). Let us prove that there are some $d\in (-1,1)$ and $\Omega \in \mathbb{S}^{m-1}$ such that $w=\kappa(d,.)\Omega.$ For this purpose, define
\begin{equation}\label{50}
 \xi=\frac{1}{2} \log\left(\frac{1+y}{1-y}\right) (\mbox{that is }  y=\tanh \xi)\mbox{ and } \bar w( \xi)=w(y) (1-y^2)^\frac{1}{p-1}. 
\end{equation}
As in the real case, we see from straightforward calculations that $\bar w\not\equiv 0$ is a $H^1(\Bbb{R})$ solution to
\begin{equation}\label{51}
 \partial_\xi^2\bar w +|\bar w|^{p-1}\bar w-\frac{4}{(p-1)^2}\bar w=0,\,\forall \xi \in \Bbb{R}.
\end{equation}

 Our aim is to prove the existence of $\Omega \in \mathbb{S}^{m-1}$ and $\xi_0 \in \mathbb{R}$ such that $\bar w(\xi)=\Omega \bar {k} (\xi+\xi_0)$ where 
$$\bar {k}(\xi)=\frac{\kappa_0}{\cosh^\frac{2}{p-1}(\xi)}.$$
Since $\bar w \in H^1(\Bbb R)  \subset C^\frac{1}{2}(\Bbb R),$ we see that $\bar w$ is a strong $C^2$ solution of equation (\ref{51}). Since $\bar w\not\equiv  0$, there exists $\xi_0 \in \mathbb{R}$ such that $\bar w(\xi_0)\neq 0$. By invariance of (\ref{51}) by translation, we may suppose that $\xi_0=0$.
Let
 \begin{equation}\label{*}
  G^*=\left\{ \xi \in \mathbb{R} \,|\, \bar w(\xi)\neq 0 \right\},
 \end{equation}
 a nonempty open set by continuity. Note that $G^*$ contains some non empty interval $I$ containing 0.

 We introduce $\rho$ and $\Omega$ by $$\rho=|\bar w|,\,\Omega=\frac{\bar w}{|\bar w|},\mbox{ whenever }\xi\in G^*.$$ 
From equation (\ref{51}), we see that
 \begin{equation}\label{17.5}
\rho''\Omega+2 \rho' \Omega'+ \rho \Omega''+\rho^p\Omega- \frac{4}{(p-1)^2} \rho\Omega=0.
 \end{equation}
Now, since $|\Omega|=1$, we immediately see that $ \Omega'. \Omega=0$ and $ \Omega''. \Omega+|\Omega'|^2=0$.

Let $H(\xi)=|\Omega'|^2$. Projecting equation (\ref{17.5}) according to $\Omega$ and $\Omega'$ we see that
 \begin{equation}\label{mer}
 \forall \xi \in G^*,\,\left\{
\begin{array}{l}
 \rho''(\xi)-\rho(\xi) H(\xi) -c_0\rho(\xi)+\rho(\xi)^p=0,\; c_0= \frac{4}{(p-1)^2}\\
4\rho'(\xi)H(\xi)+\rho(\xi) H'(\xi)=0
\end{array}
 \right . 
\end{equation}
 Integrating the second equation on the interval $I\subset G^*$, we see that for all $\xi \in I$, $H(\xi)=\frac{H(0)(\rho(0))^4}{(\rho(\xi))^4}$. Plugging this in the first equation, we get
 \begin{equation}\label{stka}
 \forall \xi \in I,\,\rho''(\xi)-\frac{\mu}{(\rho(\xi))^3}-c_0\rho(\xi)+\rho^p(\xi)=0 \mbox{ where }\mu=H(0)(\rho(0))^4.
 \end{equation}
\noindent Now let
 \begin{eqnarray}\label{waw}
 G= \left\{ \xi \in G^*, \forall \xi'\in I_\xi,\, H(\xi')=\frac{H(0)\rho(0)^4}{\rho(\xi')^4} \right\},
 \end{eqnarray}
where $I_\xi=[0,\xi)$ if $\xi\ge 0$ or $I_\xi=(\xi,0]$ if $\xi\le 0$. Note that $I \subset G$. Now, we give the following:
\begin{lem} \label{tunisie}
 There exists 
$\epsilon_0 >0$ such that 
$$\forall \xi \in G, \,\forall \xi'\in  I_\xi,\, 0< \epsilon_0 \le |\bar w(\xi')|\le \frac{1}{\epsilon_0}.$$

\end{lem}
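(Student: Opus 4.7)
The plan is to reduce the two-sided bound on $|\bar w|$ to a two-sided bound on $\rho=|\bar w|$ and to obtain the latter via a first integral of the scalar ODE (\ref{stka}). By the very definition of $G$ in (\ref{waw}), for every $\xi\in G$ the identity $H(\xi')=H(0)\rho(0)^4/\rho(\xi')^4$ holds on the whole segment $I_\xi$, so plugging it into the first equation of (\ref{mer}) shows that $\rho$ satisfies (\ref{stka}) on $I_\xi$ with the constant $\mu=H(0)\rho(0)^4\ge 0$ fixed by the initial data at $\xi'=0$.

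The key step is to extract a conservation law. Multiplying (\ref{stka}) by $\rho'$ and integrating from $0$ to $\xi'\in I_\xi$ yields
$$\frac{1}{2}\rho'(\xi')^2 + V(\rho(\xi')) = E_0, \qquad V(\rho) := \frac{\mu}{2\rho^2} - \frac{c_0}{2}\rho^2 + \frac{1}{p+1}\rho^{p+1},$$
where $E_0$ depends only on $\rho(0),\rho'(0),\mu$, hence only on $\bar w$. Since $\rho'(\xi')^2\ge 0$, the orbit stays in the sublevel set $\{V\le E_0\}$ throughout $I_\xi$. In the generic case $\mu>0$, the centrifugal term $\mu/(2\rho^2)$ sends $V$ to $+\infty$ as $\rho\to 0^+$, while $p>1$ ensures that the term $\rho^{p+1}/(p+1)$ dominates the $-c_0\rho^2/2$ and drives $V$ to $+\infty$ as $\rho\to+\infty$. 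Consequently $\{V\le E_0\}$ is a compact subinterval of $(0,+\infty)$, and one can choose $\epsilon_0>0$ depending only on $(\mu,c_0,p,E_0)$ so that $\epsilon_0\le\rho(\xi')\le 1/\epsilon_0$ for every $\xi\in G$ and every $\xi'\in I_\xi$. Note that the resulting $\epsilon_0$ is uniform over $G$ precisely because $E_0$ is anchored at the common point $\xi'=0$.

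The principal obstacle is the degenerate case $\mu=0$, where $V$ no longer blows up as $\rho\to 0^+$ and the argument above gives only the upper bound. In that case $H(0)=|\Omega'(0)|^2=0$, so the defining relation for $G$ forces $H\equiv 0$ on $I_\xi$; hence $\Omega$ is constant on $I_\xi$ and $\bar w=\Omega_0\rho$ is essentially scalar, so (\ref{51}) reduces to the real-valued equation already analyzed in \cite{MR2362418}. Using that reduction one identifies $\rho$ with a translate of $\bar k$ on $I_\xi$ and reads the lower bound off the explicit formula, the requirement $\xi\in G^*$ and the $H^1(\mathbb{R})$ integrability of $\bar w$ confining the relevant range of $\xi'$ to a region where $\bar k$ is strictly positive. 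Combining this with the $\mu>0$ analysis above yields the stated uniform bound and completes the proof.
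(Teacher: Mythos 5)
Your treatment of the non-degenerate case coincides with the paper's: multiply (\ref{stka}) by $\rho'$, integrate from $0$ to $\xi'$ to obtain the first integral $\mathcal{E}(\xi')=\mathcal{E}(0)$, and confine $\rho$ to a compact subinterval of $(0,\infty)$ because the effective potential blows up at both ends. The paper records this as $F(\rho(\xi'))=\tfrac{1}{2}\rho'(\xi')^2\ge 0$ with $F(r)\to-\infty$ as $r\to 0$ or $r\to\infty$, and takes $\epsilon_0=\epsilon_0(\mu,\mathcal{E}(0))$; that is exactly your $V$-level-set argument, including the remark that anchoring at $\xi'=0$ makes $\epsilon_0$ uniform over $G$. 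On this core step you and the paper agree.

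You go further by isolating the case $\mu=0$, and you are right that it is not covered by the level-set argument: with $\mu=0$ the potential does not blow up as $\rho\to 0^+$, so only the upper bound comes out. This is a real sensitivity that the paper's one-line argument passes over silently (its claim that $F(r)\to-\infty$ as $r\to 0$ presupposes $\mu>0$). Unfortunately, your patch does not close the gap. After reducing to the scalar ODE and identifying $\rho$ with a translate of $\bar k=\kappa_0\cosh^{-2/(p-1)}$, you claim a lower bound ``off the explicit formula''; but $\bar k\to 0$ at $\pm\infty$, and since $G$ is ultimately shown to be all of $\mathbb{R}$ (Lemma \ref{os}), the union $\bigcup_{\xi\in G}I_\xi$ is all of $\mathbb{R}$, on which no positive lower bound for $\rho$ exists. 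The clause about $H^1(\mathbb{R})$ integrability ``confining the relevant range of $\xi'$ to a region where $\bar k$ is strictly positive'' does not help, since $\bar k>0$ on all of $\mathbb{R}$ but is not bounded away from zero. In fact the lemma, read literally, fails when $\mu=0$: take $\bar w=\bar k\,\Omega_0$ for a fixed $\Omega_0\in\mathbb{S}^{m-1}$; then $\mu=0$, $G=\mathbb{R}$, and $\inf_\xi|\bar w(\xi)|=0$. So the uniform two-sided bound cannot be recovered there, and one must either restrict the lemma to $\mu>0$ (which is how it is actually invoked in the contradiction argument following Lemma \ref{os}) or weaken it to bounds over bounded $I_\xi$ (which is all the proof of Lemma \ref{os} needs). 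Your instinct to treat $\mu=0$ separately was correct, but the argument you give there does not — and cannot — establish the conclusion as stated.
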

\begin{proof} The proof is the same as in the complex-case, see page $5898$ in \cite{<3}. But for the reader's convenience and for the sake of self-containedness, we recall it here.
Take $\xi \in G$. By definition (\ref{waw}) of $G$, we see that equation (\ref{stka}) is satisfied for all $\xi'\in I_\xi$. Multiplying $\rho''(\xi)-\frac{\mu}{(\rho(\xi))^3}-c_0\rho(\xi)+\rho^p(\xi)=0  $ by $\rho'$ and integrating between $0$ and $\xi$, we get:
 $$\forall \xi  \in  I_\xi,\, \mathcal{E}(\xi')=\mathcal{E}(0), \mbox{ where }\mathcal{E}(\xi')=\frac{1}{2}(\rho'(\xi'))^2+\frac{\mu}{2( (\rho(\xi'))^2}-\frac{c_0}{2}\rho^2(\xi')+\frac{\rho^{p+1}(\xi')}{p+1},$$
\noindent or equivalently, 
$$\forall \xi' \in I_\xi,\, F(\rho(\xi'))=\frac{1}{2} \rho'(\xi')^2\ge 0 \mbox{ where }F(r)=\frac{\mu}{2r^2}+\frac{c_0}{2} r^2-\frac{r^{p+1}}{p+1}+\mathcal{E}(0).$$
Since $F(r)\rightarrow -\infty$ as $r \rightarrow 0$ or $r \rightarrow \infty$, there exists $\epsilon_0=\epsilon_0(\mu, E(0)) >0$ such that $\epsilon_0\le \rho(\xi') \le \frac{1}{\epsilon_0}$, which yields to the conclusion of the Claim \ref{tunisie}.
\end{proof}
We claim the following:
\begin{lem}\label{os} It holds that
$G=\mathbb{R}$.
\end{lem}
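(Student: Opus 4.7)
The plan is to establish $[0,\infty)\subset G$ by a bootstrap/continuity argument; the symmetric argument then gives $(-\infty,0]\subset G$, so $G=\mathbb{R}$. Set
\[ \xi_+:=\sup\{\xi\ge 0\mid [0,\xi]\subset G\}. \]
Since the nonempty open interval $I$ introduced before \eqref{waw} lies in $G$ and contains $0$, we have $\xi_+>0$. I would then proceed by contradiction, assuming $\xi_+<\infty$.

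The decisive a priori ingredient is Lemma \ref{tunisie}. For each $\xi\in[0,\xi_+)\subset G$ it supplies $\rho\ge\epsilon_0$ on $I_\xi=[0,\xi)$, so taking the union yields $\rho\ge\epsilon_0$ throughout $[0,\xi_+)$. Continuity of $\rho=|\bar w|$ then forces $\rho(\xi_+)\ge\epsilon_0>0$, hence $\xi_+\in G^*$. Since $G^*$ is open by continuity of $\bar w$, there exists $\delta>0$ with $[0,\xi_++\delta)\subset G^*$.

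On $[0,\xi_++\delta)$ the function $\bar w$ is smooth and non-vanishing, so $\rho$, $\Omega$ and $H$ are smooth there. The second line of \eqref{mer}, rewritten as $(\rho^4 H)'=0$, can then be integrated from $0$ to give $H(\xi')\rho(\xi')^4=H(0)\rho(0)^4$ for every $\xi'\in[0,\xi_++\delta)$. This is exactly the defining identity of $G$ in \eqref{waw}, so $[0,\xi_++\delta/2]\subset G$, contradicting the maximality of $\xi_+$. Hence $\xi_+=\infty$, and the symmetric argument handles $(-\infty,0]$.

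The only delicate step is Lemma \ref{tunisie}, which keeps $\rho$ uniformly away from $0$ along the propagation; without that quantitative bound, the equation for $H$ in \eqref{mer} would degenerate at a putative first zero of $\bar w$ and the bootstrap would break down. Once that lower bound is in hand, the remaining continuation is routine, so I expect no further obstacle.
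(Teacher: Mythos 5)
Your proof is correct and follows essentially the same strategy as the paper's: a continuation argument where Lemma \ref{tunisie} provides the crucial lower bound $\rho\ge\epsilon_0$ up to the endpoint, continuity and openness of $G^*$ allow the step past it, and the conservation law $(\rho^4 H)'=0$ from the second equation of \eqref{mer} propagates the defining identity of $G$. The only cosmetic difference is that you integrate the conservation law directly from $0$ over the extended interval, whereas the paper first records the identity at the endpoint $a$ and then reintegrates from there; both are equivalent.
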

\begin{proof}
Note first that by construction, $G$ is a nonempty interval (note that $0\in I \subset G$ where $I$ is defined right before (\ref{*})). We have only to prove that $\sup G=+\infty$, since the fact that $\inf G=-\infty$ can be deduced by replacing $\bar w(\xi)$ by $\bar w(-\xi)$.\\By contradiction, suppose that $\sup G=a<+\infty$.

First of all, by Lemma \ref{tunisie}, we have for all $ \xi' \in [0,a),  0< \epsilon_0 \le |\bar w(\xi')|\le \frac{1}{\epsilon_0}$. By continuity, this holds also for $\xi'=a$, hence, $\bar w(a)\neq 0$, and $a\in G^*$. Furthermore, by definition of $G$ and continuity, we see that 
\begin{equation}\label{gg}
\forall \xi \in [0, a], H(\xi)=\frac{H(0)\rho(0)^4}{\rho(\xi)^4}.
\end{equation}

Therefore, we see that $a\in G$.
By continuity, we can write for all $\xi \in (a-\delta,a+\delta),$ where $\delta >0 $ is small enough,
 \begin{equation*} \left\{
\begin{array}{l}
 \rho''(\xi)-\rho (\xi)H(\xi)-c_0\rho(\xi)+\rho(\xi)^p=0,\; c_0= \frac{4}{(p-1)^2}\\
4\rho'(\xi)H(\xi)+\rho(\xi) H'(\xi)=0.
\end{array}
 \right . 
\end{equation*}
From the second equation and (\ref{gg}) applied with $\xi=a$, we see that $H(\xi)=\frac{H(a)(\rho(a))^4}{(\rho(\xi))^4}=\frac{H(0)(\rho(0))^4}{(\rho(\xi))^4}$. Therefore, it follows that $(a, a+\delta )\in G$, which contradicts the fact that $a=\sup G$.
\end{proof}
Note from Lemma \ref{os} that (\ref{mer}) and (\ref{stka}) holds for all $\xi \in \Bbb{R}$.
 We claim that $H(0)=0$. Indeed, if not, then by (\ref{stka}), we have $\mu \neq 0$, and since $G=\Bbb{R}$, we see from Lemma \ref{tunisie} that for all $\xi\in\Bbb{R}$, $|\bar w(\xi)|\ge \epsilon_0$, therefore $w \notin L^2(\Bbb{R})$, which contradicts the fact that $\bar w\in H^1(\mathbb{R})$. Thus, $H(0)=0$, and $\mu=0$.
 By uniqueness of solutions to the second equation of (\ref{mer}), we see that $H(\xi)=0$ for all $\xi \in \Bbb{R}$, so $\Omega(\xi)=\Omega(0)$, and

 \begin{equation*} \left\{
\begin{array}{l}
\bar  w(0)=\rho(0) \Omega(0)\\
\bar  w'(0)=\rho'(0)  \Omega(0).
\end{array}
 \right . 
\end{equation*}
Let $W$ be the maximal real-valued solution of
 \begin{equation*} \left\{
\begin{array}{l}
W''-c_0 W+|W|^{p-1}W=0\\
 W(0)=\rho(0)\\
 W'(0)=\rho'(0).
\end{array}
 \right . 
\end{equation*}
By uniqueness of the Cauchy problem of equation (\ref{51}), we have for all $\xi \in \mathbb{R}, \bar w(\xi)=W(\xi)  \Omega(0)$, and as $\bar w\in H^1(\mathbb{R})$, $W$ is also in $H^1(\mathbb{R})$. It is then classical that there exists $\xi_0$ such that for all $\xi \in \mathbb{R}$, $W(\xi)=\bar {k }(\xi+\xi_0)$ (remember that $\rho(0)>0$, hence we only select positive solutions here). In addition, for $ \Omega_0= \Omega(0)$, $\bar w(\xi)=\bar {k }(\xi+\xi_0)\Omega_0$.
Thus, for $d=\tanh \xi_0\,\in\, (-1,1)$ and $y=\tanh \xi$, we get
\begin{align*}
 &\bar w(\xi)=\kappa_0 \left[1-\tanh (\xi+\xi_0)^2\right]^\frac{1}{p-1}\Omega_0=\kappa_0 \left[1-\left(\frac{\tanh \xi+\tanh \xi_0}{1+\tanh \xi \tanh \xi_0}\right)^2\right]^\frac{1}{p-1}\Omega_0\notag\\&= \kappa_0 \left[1-\left(\frac{y+d}{1+dy}\right)^2\right]^\frac{1}{p-1}\Omega_0= \kappa_0 \left[\frac{(1-d^2)(1-y^2)}{(1+dy)}^2\right]^\frac{1}{p-1}\Omega_0= \kappa(d,y) (1-y^2)^\frac{1}{p-1}\Omega_0.
\end{align*}
By (\ref{50}), we see that $w(y)= \kappa(d,y)\Omega_0$. This concludes the proof of Proposition \ref{p1}.

\medskip
\section{Outline of the proof of Theorem \ref{theo3}}\label{section5}
The proof of Theorem \ref{theo3} is not a simple adaptation of the complex-case to the vector-valued case, in fact, it involves a delicate modulation.
In this section, we will outline the proof, insisting on the novelties, and only recalling the features which are the same as in the real-valued complex-valued cases.

\medskip
This section is organized as follows:\\
- In Subsection 3.1, we linearize equation (\ref{equa}) around $\kappa(d,y)e_1$ where $e_1=(1,0,...,0)$ and figure-out that, with respect to the complex-valued case, our linear operator is just a superposition of one copy of the real part operator, with $(m-1)$ copies of the imaginary part operator. \\
- In Subsection 3.2, we recall from \cite{MR2362418} the spectral properties of the real-part operator.\\
-  In Subsection 3.3,  we recall from \cite{<3} the spectral properties of the imaginary-part operator.\\
- In Subsection 3.4, assuming that $\Omega^*=e_1$ (possible thanks to rotation invariance of (\ref{equa})), we introduce a modulation technique adapted to the vector-valued case. This part makes the originality of our work with respect to the complex-valued case.\\
- In Subsection 3.5, we write down the equations satsified by the modulation parameters along with the PDE satisfied by $q(y,s)$ and its components.\\
- In Subsection 3.6, we conclude the proof of Theorem  \ref{theo3}.

\subsection{ The linearized operator around a non-zero stationary solution}
We study the properties of the linearized operator of equation (\ref{equa}) around the stationary solution $\kappa(d,y)$ $(\ref{defk})$.

\medskip
\noindent Let us introduce $q=(q_1,q_2)\in \mathbb{R}^m\times \mathbb{R}^m$
for all $ s\in[s_0, \infty)$, for a given $s_0 \in \Bbb{R}$, by
\begin{eqnarray}\label{158.5}
\begin{pmatrix} w(y,s)\\\partial_s w(y,s) \end{pmatrix} =\begin{pmatrix} \kappa(d,y)e_1\\0\end{pmatrix} +\begin{pmatrix} q_1(y,s)\\q_2(y,s)\end{pmatrix}.
\end{eqnarray}
Let us introduce the coordinates of $q_1$ and $q_2$ by $q_1=(q_{1,1},q_{1,2},...,q_{1,m})$, $q_2=(q_{2,1},q_{2,2},...,q_{2,m})$. We see from equation (\ref{equa}), that $q$ satisfies the following equation for all $s\ge s_0$:
\begin{eqnarray} \label{linearise-complexe}
\frac{\partial }{\partial s} \begin{pmatrix} q_1\\q_2 \end{pmatrix} = L_d\begin{pmatrix}q_1\\q_2\end{pmatrix}+\begin{pmatrix}0\\ f_d(q_1)\end{pmatrix},
\end{eqnarray}
where
$$ L_d\begin{pmatrix}q_1\\q_2\end{pmatrix}=\begin{pmatrix}q_2 \\\mathcal{L}q_1+\bar\psi (d, y)q_{1,1}e_1+\sum_{j=2}^{m} \tilde\psi (d, y)q_{1,j}e_j-\frac{p+3}{p-1} q_2- 2 y \partial_y q_2 \end{pmatrix}, $$

\begin{eqnarray}\label{barpsi}
\bar \psi(d,y)  =p \kappa(d, y)^{p-1}-\frac{2(p+1)}{(p-1)^2}
\end{eqnarray}
\begin{eqnarray}\label{tildepsi}
\tilde \psi(d,y)  =  \kappa(d, y)^{p-1}-\frac{2(p+1)}{(p-1)^2}
\end{eqnarray}
$$f_d(q_1)={f_{d,1}}(q_1)e_1+ \sum_{j=2}^{m} f_{d,j}(q_1)e_j,$$

where
\begin{eqnarray}\label{barf_d}
 f_{d,1}(q_1)=|\kappa(d, y)e_1+q_1|^{p-1}(\kappa(d, y)+q_{1,1})-\kappa(d, y)^{p}-p\kappa^{p-1}(d, y)q_{1,1}.
\end{eqnarray}
\begin{eqnarray}\label{tildef_d}
 f_{d,j}( q_1)=|\kappa(d, y)e_1+q_1|^{p-1}q_{1,j}-\kappa^{p-1}(d,y)q_{1,j}.
\end{eqnarray}

Projecting (\ref{linearise-complexe}) on the first coordinate, we get for all $s\ge s_0$:

\begin{eqnarray}\label{159.5}
\label{mult}\frac{\partial }{\partial s}\begin{pmatrix} q_{1,1}\\q_{2,1}
\end{pmatrix}= \bar L_d \begin{pmatrix} q_{1,1} \\q_{2,1} \end{pmatrix}+\begin{pmatrix}0\\ f_{d,1}(q_1)\end{pmatrix},
\end{eqnarray}
where $\bar L_d $ is given by:
\begin{eqnarray}\label{barL_d}
\bar L_d  \begin{pmatrix} q_{1,1}\\q_{2,1} \end{pmatrix}=
\begin{pmatrix}q_{2,1} \\\mathcal{L} q_{1,1}+ \bar \psi(d,y)   q_{1,1}-\frac{p+3}{p-1} q_{2,1} - 2 y \partial_y q_{2,1}
\end{pmatrix},
\end{eqnarray}

Now, projecting equation (\ref{linearise-complexe}) on the j-th coordinate with $j=2,..,m,$ we see that
\begin{eqnarray}\label{multi}\frac{\partial }{\partial s}\begin{pmatrix} q_{1,j}\\q_{2,j}
\end{pmatrix}= \tilde L_d\begin{pmatrix} q_{1,j} \\q_{2,j} \end{pmatrix}+\begin{pmatrix}0\\ f_{d,j}( q_1)\end{pmatrix},
\end{eqnarray}
where
\begin{eqnarray}\label{tildeL_d}
 \tilde L_d \begin{pmatrix} q_{1,j}\\q_{2,j} \end{pmatrix}=
\begin{pmatrix}q_{2,j} \\\mathcal{L} q_{1,j}+ \tilde\psi(d,y) q_{1,j}-\frac{p+3}{p-1} q_{2,j} - 2 y \partial_y q_{2,j}
\end{pmatrix},
\end{eqnarray}
{\bf Remark:} Our linearized operator $L_d$ is in fact diagonal in the sens that
$$ L_d\begin{pmatrix}q_1\\q_2\end{pmatrix}=\bar L_d  \begin{pmatrix} q_{1,1}\\q_{2,1} \end{pmatrix}e_1+   \sum_{j=2}^{m}   \tilde L_d \begin{pmatrix} q_{1,j}\\q_{2,j} \end{pmatrix}e_j.$$

We mention that for $j=1$, equation (\ref{159.5}) is the same as the equation satisfied by the real part of the solution in the complex case (see Section $3$ page $5899$ in \cite{<3}), whereas for $j=2,..,m$, equation (\ref{multi}) is the same as the equation satisfied by the imaginary part of the solution operator in the complex case. Thus, the reader will have no difficulty in adapting the remaining part of the proof to the vector-valued case. Thus, the dynamical system formulation we performed when $m=2$ can be adapted straightforwardly to the case $m\ge3$.

 \noindent Note from (\ref{9}) that we have
$$||q||_\mathcal{H}=[\phi (q, q)]^\frac{1}{2}<+\infty,$$
where the inner product $\phi$ is defined by
\begin{equation*}
 \phi (q, r)=\phi \left(\begin{pmatrix} q_1\\q_2 \end{pmatrix}, \begin{pmatrix} r_1\\r_2 \end{pmatrix}\right)=\int_{-1}^{1}(q_1.r_1+q'_1. r'_1(1-y^2)+q_2.r_2)\rho\;dy.
\end{equation*}
where $q_1.r_1=\sum_{j=1}^m q_{1,j}.r_{1,j}$ is the standard inner product in $\mathbb{R}^m$, with similar expressions for $q'_1.r'_1$ and $q_2.r_2$.

Using integration by parts and the definition of $\mathcal{L}$ (\ref{8}), we have the following:
\begin{equation}\label{phi}
 \phi (q, r)=\int_{-1}^{1}(q_1\cdot (-\mathcal{L} r_1+  r_1)+q_2 \cdot r_2)\rho \,dy.
\end{equation}
In the following two sections, we recall from \cite{MR2362418} and \cite{<3} the spectral properties of $\bar L_d$ and $\tilde L_d$.
\subsection{Spectral theory of the operator $\bar L_d$}
From Section 4 in \cite{MR2362418}, we know that $\bar L_d$ has two nonnegative eigenvalues $\lambda=1$ and $\lambda=0$ with eigenfunctions
\begin{equation}
\bar F_1^d (y)= (1-d^2)^{\frac{p}{p-1}}\begin{pmatrix}(1+dy)^{-\frac{p+1}{p-1}}\\(1+dy)^{-\frac{p+1}{p-1}}\end{pmatrix}
\mbox{and }\;
\bar F_0^d (y)= (1-d^2)^{\frac{1}{p-1}}\begin{pmatrix}\frac{y+d}{(1+dy)^\frac{p+1}{p-1}}\\0\end{pmatrix}. \label{110}
\end{equation}
Note that for some $C_0>0$ and any $\lambda\in \{0,1\}$, we have
\begin{equation}\label{majoration}
\forall |d|<1,\;\; \frac{1}{C_0}\leq ||\bar F_\lambda^d||_{\mathcal{H}} \leq C_0 \;\mbox{  and  }\; ||\partial_d \bar F_\lambda^d||_{\mathcal{H}} \leq \frac{C_0}{1-d^2}.
\end{equation}
Also, we know that $\bar L_d^*$ the conjugate operator of $\bar L_d$ with respect to $\phi$ is given by
\begin{equation*}
\bar L_d^*
\begin{pmatrix}
r_1\\
r_2\end{pmatrix}= \begin{pmatrix}
\bar R_d(r_2)\\
-\mathcal{L} r_1+r_1+\frac{p+3}{p-1}r_2+2y r_2'-\frac{8}{(p-1)}\frac{r_2}{(1-y^2)}\end{pmatrix}
\end{equation*}
for any $(r_1, r_2)\in (\mathcal{D}(\mathcal{L}))^2$, where $r=\bar R_d (r_2)$ is the unique solution of
\begin{equation*}
-\mathcal{L} r+r=\mathcal{L} r_2+\bar{\psi} (d, y)r_2. 
\end{equation*}
Here, the domain $\mathcal{D}(\mathcal{L})$ of $\mathcal{L}$ defined in (\ref{8}) is the set of all $r \in L_{\rho}^2$ such that $\mathcal{L} r \in L_{\rho}^2.$
\medskip

\noindent Furthermore, $\bar L_d^*$ has two nonnegative eigenvalues $\lambda=0$ and $\lambda=1$ with eigenfunctions $\bar W_{\lambda}^d$ such that 
\begin{equation*}
\bar W_{1, 2}^d (y)= \bar c_1 \frac{(1-y^2)(1-d)^\frac{1}{p-1}}{(1+dy)^\frac{p+1}{p-1}},\,\bar W_{0, 2}^d (y)= \bar c_0 \frac{(y+d)(1-d)^\frac{1}{p-1}}{(1+dy)^\frac{p+1}{p-1}},
\end{equation*}
with\footnote{ In section 4 of \cite{MR2362418}, we had non explicit normalizing constants $\bar c_\lambda=\bar c_\lambda (d)$. In Lemma 2.4 in \cite{MZ13}, the authors compute the explicit dependence of $\bar c_\lambda (d)$.}
\begin{equation*}
\frac{1}{\bar c_\lambda}=2(\frac{2}{p-1}+\lambda)\int_{-1}^1 (\frac{y^2}{1-y^2} )^{1-\lambda}\rho(y) \,dy,
\end{equation*}
and $\bar W_{\lambda, 1}^d $ is the unique solution of the equation 
\begin{equation*}
-\mathcal{L} r+ r =\left(\lambda-\frac{p+3}{p-1}\right) r_2- 2 y r'_2 + \frac{8}{p-1} \frac{r_2}{1-y^2}
\end{equation*}
with $r_2= \bar W_{\lambda, 2}^d$. We also have for $\lambda=0,1$
\begin{equation}\label{36,5}
||\bar W_\lambda ^d ||_{\mathcal{H}}+ (1-d^2)||\partial_d \bar W_\lambda ^d ||_{\mathcal{H}} \le C, \forall  |d|<1.
\end{equation}

 Note that we have the following relations for $\lambda=0$ or $\lambda=1$
\begin{equation}\label{36,6}
\phi (\bar W_\lambda ^d,\bar{F_\lambda ^d})=1\mbox{ and }\phi (\bar W_\lambda ^d,\bar{F_{1-\lambda} ^d})=0.
\end{equation}
Let us introduce for $\lambda \in \{0, 1\}$ the projectors $\bar \pi_\lambda(r)$, and $\bar \pi_-^d(r)$ for any $r\in\mathcal{H}$ by
\begin{equation} \label{barpi}
\bar \pi_\lambda^d (r)=\phi (\bar W_\lambda^d, r), 
\end{equation}

\begin{equation} \label{ma}
r=\bar \pi_0^d (r) \bar F_0^d (y)+\bar \pi_1^d (r) \bar F_1^d (y)+ \bar \pi_{-}^d (r),
\end{equation}
and the space
\begin{equation*}
\bar{\mathcal{H}}_{-}^d \equiv \{r \in \mathcal{H} \,|\, \bar \pi_1^d (r)=\bar \pi_0^d(r)=0\}.
\end{equation*}
Introducing the bilinear form
\begin{eqnarray}\label{134'}
 \bar \varphi_{d} (q,r)&=&\int_{-1}^{1} (-\bar \psi(d,y) q_1r_1+q_1' r_1'(1-y^2)+q_2 r_2 ) \rho dy,
\end{eqnarray}
where $\bar \psi(d,y)$ is defined in (\ref{barpsi}), we recall from Proposition 4.7 page 90 in  \cite{MR2362418} that there exists $C_0>0$ such that for all $|d|<1$, for all $r\in \bar{\mathcal{H}}^d_{-},$
\begin{align}\frac{1}{C_0} ||r||_{\mathcal{H} }^2\le \bar \varphi_d (r,r)\le C_0 ||r||_\mathcal{H} ^2.
\label{36,55}
\end{align}
Furthermore, if $r\in \mathcal{H},$ then
\begin{align}\label{36,7}
\frac{1}{C_0} ||r||_{\mathcal{H} }\le\left(|\bar \pi_{0}^d  (r)| +|\bar \pi_{1}^d  (r)| +\sqrt{ \bar \varphi_d  (r_{-},r_{-})}\right)\le C_0 ||r||_\mathcal{H}\mbox{ where }r_{-}=\bar \pi_{-}^d  (r) .
\end{align}
\noindent In the following section we recall from \cite{<3} the spectral properties of $\tilde L_d$.

\subsection{Spectral theory of the operator $\tilde L_d$}
From Section 3 in \cite{<3}, we know that $\tilde L_d$ has one nonnegative eigenvalue $\lambda=0$ with eigenfunction 
\begin{equation}\label{F}
\tilde F_0^d(y)= \begin{pmatrix}\kappa (d,y)\\0\end{pmatrix}.
\end{equation}
Note that for some $C_0>0$ we have
\begin{equation}\label{majoration1}
\forall |d|<1,\;\; \frac{1}{C_0}\leq ||\tilde F_0^d||_{\mathcal{H}} \leq C_0 \;\mbox{  and  }\; ||\partial_d \tilde F_0^d||_{\mathcal{H}} \leq \frac{C_0}{1-d^2}.
\end{equation}

We know also that the operator $\tilde L_d^*$ conjugate of $\tilde L_d$ with respect to $\phi$ is given by
\begin{equation}\label{L*_d}
\tilde L_d^*
\begin{pmatrix}
r_1\\
r_2\end{pmatrix}= \begin{pmatrix}
\tilde R_d(r_2)\\
-\mathcal{L} r_1+r_1+\frac{p+3}{p-1}r_2+2y r_2'-\frac{8}{(p-1)}\frac{r_2}{(1-y^2)}\end{pmatrix}
\end{equation}
for any $(r_1, r_2)\in (\mathcal{D}(\mathcal{L}))^2$, where $r=\tilde R_d (r_2)$ is the unique solution of
\begin{equation}
-\mathcal{L} r+r=\mathcal{L} r_2+\tilde{\psi} (d, y)r_2. \label{sol}
\end{equation}

Furthermore, $\tilde L_d^*$  have one nonnegative eigenvalue $\lambda=0$ with eigenfunction $\tilde W_0^d$ such that
\begin{equation}\label{W}
\tilde W_{0, 2}^d (y)= \tilde c_0 \kappa(d, y) \mbox{ and }\frac{1}{\tilde c_0}=\frac{4\kappa_0^2}{p-1} \int_{-1}^1 \frac{\rho(y)}{1-y^2}dy
\end{equation}
and $\tilde W_{0, 1}^d $ is the unique solution of the equation
\begin{equation}\label{tildeW_1}
-\mathcal{L} r+ r =-\frac{p+3}{p-1} r_2- 2 y r'_2 + \frac{8}{p-1} \frac{r_2}{1-y^2}
\end{equation}
with $r_2= \tilde W_{0, 2}^d$.

We also have for $\lambda=0,1$
\begin{equation}\label{47,5}
||\tilde W_0 ^d ||_{\mathcal{H}}+ (1-d^2)||\partial_d \tilde W_\lambda ^d ||_{\mathcal{H}} \le C,\; \forall  |d|<1.
\end{equation}

Moreover, we have
\begin{equation}\label{1}
\phi (\tilde W_0^d,\tilde{F_0^d})=1.
\end{equation}
Let us introduce the projectors $\tilde \pi_0^d (r)$ and $ \tilde \pi_{-}^d (r))$ for any $r \in \mathcal{H}$ by
\begin{equation} \label{pi}
\tilde \pi_0^d (r)=\phi (\tilde W_0^d, r), 
\end{equation}
\begin{equation} \label{s}
r=\tilde \pi_0^d (r) \tilde F_0^d (y)+ \tilde \pi_{-}^d (r).
\end{equation}

and the space

\begin{equation}\label{ss}
\tilde{\mathcal{H}}_{-}^d \equiv \{r \in \mathcal{H} \,|\,  \tilde \pi_{0}^d (r)=0\}.
\end{equation}

Introducing the bilinear form
\begin{eqnarray}\label{134}
 \tilde \varphi_{d} (q,r)&=&\int_{-1}^{1} (-\tilde \psi(d,y) q_1r_1+q_1' r_1'(1-y^2)+q_2 r_2 ) \rho dy,\notag\\
\end{eqnarray}
where $\tilde \psi(d,y) $ is defined in (\ref{tildepsi}), we recall from Proposition $3.7$ page $5906$ in \cite{<3} that there exists $C_0 >0$ such that for all $|d|<1,$ for all $r\in \tilde{\mathcal{H}}^d_{-},$

\begin{eqnarray}\label{50.5}
 \frac{1}{C_0} ||r||_{\mathcal{H} }^2\le \tilde \varphi_d (r,r)\le C_0 ||r||_\mathcal{H} ^2.
\end{eqnarray}

\subsection{ A modulation technique}\label{sec,mod}

We start the proof of Theorem \ref{theo3} here.\\
Let us consider $w\in C ([s^*,\infty), \mathcal{H})$ for some $s^*\in \mathbb{R}$ a solution of equation (\ref{equa}) such that
$$\forall s\ge s^*, E(w(s),\partial_s w(s))\ge E(\kappa_0,0)$$
and
\begin{eqnarray}\label{56,5}
\Big|\Big|\begin{pmatrix} w(s^*)\\\partial_s w(s^*) \end{pmatrix} -\begin{pmatrix} \kappa(d^*,.)\Omega^*\\0\end{pmatrix} \Big|\Big|_{\mathcal{ H}}\le \epsilon^*
 \end{eqnarray}
for some $d^*\in (-1,1)$, $\Omega^*\in  \mathbb{S}^{m-1}$ and $\epsilon^*>0$ to be chosen small enough.
\medskip

Our aim is to show the convergence of $(w(s),\partial_s w(s))$ as $s\rightarrow \infty$ to some $(\kappa(d_\infty,0)\Omega_\infty,0)$, for some $(d_\infty,\Omega_\infty)$ close to $(d^*,\Omega^*)$.
\medskip

As one can see from (\ref{56,5}), $(w,\partial_s w)$ is close to a one representative of the family of the non-zero stationary solution 
\begin{equation*}\label{}
 S^*\equiv \{ (\kappa (d,y),0)\Omega, |d|<1, \Omega  \in \Bbb S^{m-1}\}.
\end{equation*}
From the continuity of $(w,\partial_s w)$ from $[s^*,\infty)$ to $\mathcal{ H}$, $(w(s),\partial_s w(s))$ will stay close to a soliton from $ S^*$, at least for a short time after $s^*$. In fact, we can do better, and impose some orthogonality conditions, killing the zero directions of the linearized operator of equation (\ref{equa}) (see the operator $L_d$ defined in (\ref{linearise-complexe})).
\medskip

From the invariance of equation (\ref{equa}) under rotations in $\mathbb{R}^{m}$, we may assume that 

\begin{eqnarray}\label{53,5}
\Omega^*=e_1.
\end{eqnarray}
 \medskip
 
We recall that at this level of the study in the complex case (i.e. for $m=2$), we were able to modulate $(w,\partial_s w)$ as follows
\begin{eqnarray}\label{168}
\begin{pmatrix} w(y,s)\\\partial_s w(y,s) \end{pmatrix} =e^{i \theta (s)}\left[\begin{pmatrix} \kappa(d(s),y)\\0\end{pmatrix} +\begin{pmatrix} q_1(y,s)\\q_2(y,s)\end{pmatrix}\right].
\end{eqnarray}
for some well chosen $d(s)\in (-1,1)$ and $\theta(s)\in \mathbb{R}$, such that 
\begin{equation*}
\bar\pi_0^{d(s)}\begin{pmatrix} q_{1,1}(s)\\q_{2,1}(s)\end{pmatrix} =\tilde\pi_0^{d(s)}\begin{pmatrix} q_{1,2}(s)\\q_{2,2}(s)\end{pmatrix}=0
\end{equation*}
where $\bar\pi_0^{d}$ and $\tilde\pi_0^{d}$ are defined in (\ref{barpi}) and (\ref{pi}) and $q=(q_1, q_2)$ is small in $\mathcal{ H}$.

From (\ref{168}), we see that we have a rotation in the complex plane, which has to be generalized to the vector-valued case. In order to do so, we introduce for $i=2,...,m$

\begin{eqnarray}\label{matrice}
 R_i\equiv \begin{pmatrix}
\cos \theta_i&0&\cdots&-\sin\theta_i &\cdots&0\\
0 &1&\cdots&0 &\cdots&0\\
\vdots&\vdots&\ddots&\vdots&\ddots&\vdots\\
\sin \theta_i&0&\cdots&\cos\theta_i &\cdots&0\\
\vdots&\vdots&\ddots&\vdots&\ddots&\vdots\\
0 &0&\cdots&0&\cdots&1\\
\end{pmatrix}.
\end{eqnarray}
Note that $R_i$ is an $m\times m$ orthonormal matrix which rotates the $(e_1,e_i)$-plane by an angle $\theta_i$ and leaves all other directions invariant. We introduce $ R_\theta$ by
\begin{eqnarray}\label{R_theta}
 R_\theta\equiv R_2 R_3\cdots R_m,
\end{eqnarray}
where $\theta=(\theta_2,\theta_3,\cdots , \theta_m)$. Clearly, $R_\theta$ is an $m\times m$ orthonormal matrix.
We also define $A_j$ by
\begin{eqnarray}\label{A_i}
A_j=R_{\theta}^{-1} \frac{\partial R_\theta}{\partial \theta_j}.
\end{eqnarray}
In the appendix, we show a different expression for $A_j$:
\begin{eqnarray}\label{61,5}
A_j=\frac{\partial R_\theta^{-1} }{\partial \theta_j} R_{\theta}.
\end{eqnarray}

In fact, this formalism is borrowed from Filippas and Merle \cite{MR1317705} who introduced the modulation technique for the vector-valued heat equation
$$\partial_t u=\Delta u+|u|^{p-1}u.$$
We are ready to give our modulation technique result well adapted to the vector-valued case:

\begin{prop}{\bf (Modulation of $w$ with respect to $\kappa (d, .)\Omega$, where $\Omega\in \mathbb{R}^{m-1}$)}\label{5.11}
There exists $\epsilon_0 > 0$ and $K_1 >0$ such that for all $\epsilon\le \epsilon_0$ if $v\in\mathcal{H}$, $ d\in (-1,1) $ and $\hat{\theta}=(\hat{\theta}_2,...,\hat{\theta}_{m})\in  \mathbb{R}^{m-1}$  are such that
\begin{eqnarray*}\label{}
\forall i=2,...,m, \cos {\hat{\theta}}_i\ge \frac{3}{4}\mbox{ and } ||\hat{q}||_{\mathcal{H}}\le  \epsilon \mbox{ where } 
v =R_{\hat{\theta }}\left[\begin{pmatrix} \kappa(\hat d,.)e_1\\0\end{pmatrix} +\hat{q}\right],
\end{eqnarray*}
then, there exist $d\in (-1,1),$  $\hat{\theta}=(\hat{\theta}_2,...,\hat{\theta}_{m})\in  \mathbb{R}^{m-1}$ such that
\begin{equation}\label{167}
\bar\pi_{0}^{d}\begin{pmatrix} q_{1,1}\\ q_{2,1}\end{pmatrix} =0,\mbox{ and }\tilde\pi_{0}^{d}\begin{pmatrix} q_{1,j}\\ q_{2,j}\end{pmatrix} =0,\; \forall j=2,..m,
\end{equation}
where $q=(q_1, q_2)$ is defined by:

\begin{equation*}
\Big| \log\left(\frac{1+d}{1-d}\right)-\log\left(\frac{1+\hat{d}}{1-\hat{d}}\right)\Big|+ |\theta-\hat\theta|\le C_0 ||\hat q||_{\mathcal{H}}\le K_1 \epsilon,
\end{equation*}
\begin{equation*}
 \forall i=2,...,m,\; \cos \theta_i\ge \frac{1}{2}\mbox{ and }||q||_{\mathcal{H}}\le K_1 \epsilon.
\end{equation*}
\end{prop}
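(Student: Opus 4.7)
The plan is to apply the implicit function theorem: the $m$ modulation parameters $(d,\theta_2,\ldots,\theta_m)$ are used to enforce the $m$ orthogonality conditions (\ref{167}), one from the neutral direction of $\bar L_d^*$ (killed by $d$) and one from the neutral direction of $\tilde L_d^*$ in each extra coordinate $e_j$, $j\ge 2$ (killed by $\theta_j$). As a preliminary normalization I will work with the hyperbolic coordinate $b:=\arg\tanh d\in\mathbb{R}$ rather than with $d$ itself, because (\ref{majoration}), (\ref{36,5}), (\ref{majoration1}) and (\ref{47,5}) show that the $\partial_d$-derivatives of $\bar F_\lambda^d,\tilde F_0^d,\bar W_\lambda^d,\tilde W_0^d$ have $\mathcal{H}$-size $O\bigl(\tfrac{1}{1-d^2}\bigr)$; passing to $b$ turns these into $O(1)$ (via $\partial_b=(1-d^2)\partial_d$), which is exactly the weight needed to match the announced estimate on $|\arg\tanh d-\arg\tanh\hat d|$. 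I then define $\Psi:\mathbb{R}^m\to\mathbb{R}^m$ by
\[
\Psi(b,\theta)=\Bigl(\bar\pi_0^{\tanh b}(q_{1,1},q_{2,1}),\;\tilde\pi_0^{\tanh b}(q_{1,2},q_{2,2}),\;\ldots,\;\tilde\pi_0^{\tanh b}(q_{1,m},q_{2,m})\Bigr),
\]
where $q=q(b,\theta):=R_\theta^{-1}v-(\kappa(\tanh b,\cdot)e_1,0)$, and I search for a zero $(b,\theta)$ near the reference point $(\hat b,\hat\theta):=(\arg\tanh\hat d,\hat\theta)$, at which $q=\hat q$ and hence $|\Psi(\hat b,\hat\theta)|\le C\|\hat q\|_\mathcal{H}\le C\epsilon$.

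The heart of the argument is the computation of $D\Psi$ at the \emph{base point}, i.e.\ the limiting case $\hat q=0$ (so $v=R_{\hat\theta}(\kappa(\hat d,\cdot)e_1,0)$ and $q\equiv 0$). Using (\ref{A_i})--(\ref{61,5}) and the chain rule, the partials read $\partial_b q=-(\partial_b\kappa(\tanh b,\cdot)e_1,0)$ and $\partial_{\theta_j}q=A_j(\kappa(\hat d,\cdot)e_1,0)$. A direct computation from (\ref{matrice})--(\ref{R_theta}), exploiting that the generator $J_j:=\partial_{\theta_j}R_j|_{\theta_j=0}$ commutes with $R_j$ (so the $R_j^{\pm1}$ factors cancel in $A_j$) and that $R_{j+1},\ldots,R_m$ all fix $e_j$, yields the key identity
\[
A_j\,e_1 \;=\; \Bigl(\prod_{i=j+1}^{m}\cos\hat\theta_i\Bigr)\,e_j,\qquad j=2,\ldots,m.
\]
Combined with the algebraic identity $\partial_b\kappa(d,y)=-\tfrac{2\kappa_0}{p-1}\bar F_{0,1}^d(y)$ (a one-line computation from (\ref{defk}) and (\ref{110})) and with the duality relations $\phi(\bar W_0^d,\bar F_0^d)=1$ from (\ref{36,6}) and $\phi(\tilde W_0^d,\tilde F_0^d)=1$ from (\ref{1}), this shows that $D\Psi$ at the base point is the diagonal matrix
\[
\mathrm{diag}\!\Bigl(\tfrac{2\kappa_0}{p-1},\;\prod_{i=3}^m\cos\hat\theta_i,\;\prod_{i=4}^m\cos\hat\theta_i,\;\ldots,\;\cos\hat\theta_m,\;1\Bigr),
\]
whose diagonal entries are bounded below by a positive constant depending only on $m$ and $p$ as soon as $\cos\hat\theta_i\ge 3/4$.

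For general $\hat q$ with $\|\hat q\|_\mathcal{H}\le\epsilon$, $D\Psi(\hat b,\hat\theta)$ differs from this diagonal matrix by $O(\epsilon)$ corrections coming from $A_j\hat q$, from the $d$-derivatives of $\bar W_0^d$ and $\tilde W_0^d$ tested against $\hat q$, and from evaluating the projections on $\hat q$-contributions; the use of the $b$-variable is precisely what makes these corrections truly $O(\epsilon)$ rather than $O(\epsilon/(1-d^2))$. Taking $\epsilon_0$ small enough preserves uniform invertibility of $D\Psi$ in a fixed $\mathcal{H}$-neighborhood, and the quantitative implicit function theorem then yields a unique $(b,\theta)$ near $(\hat b,\hat\theta)$ with $\Psi(b,\theta)=0$ and $|b-\hat b|+|\theta-\hat\theta|\le C_0\|\hat q\|_\mathcal{H}$, which is the announced estimate after passing back to the $d$-variable. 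The bound $\cos\theta_i\ge 1/2$ then follows from $\cos\hat\theta_i\ge 3/4$ together with the smallness of $|\theta-\hat\theta|\le K_1\epsilon_0$, while $\|q\|_\mathcal{H}\le K_1\epsilon$ follows by the routine expansion $q=\hat q+O(|b-\hat b|+|\theta-\hat\theta|)$.

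The only genuinely new step compared to the complex case $m=2$ of \cite{<3} --- where the rotations form the abelian group $U(1)$ and one simply differentiates the single phase $e^{i\theta}$ --- is the identity $A_je_1=\bigl(\prod_{i>j}\cos\hat\theta_i\bigr)e_j$, which decouples the parameters $\theta_j$ and makes the Jacobian diagonal. I expect this to be the only delicate computational step; once it is in hand, the rest of the modulation (IFT, estimates on $d$, $\theta$, $q$) is a routine adaptation of the complex-valued argument.
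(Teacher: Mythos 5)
Your proposal is correct and follows essentially the same route as the paper: set up $\Phi$ (your $\Psi$), compute its Jacobian with respect to the modulation parameters at the reference point, use the key identity $A_j e_1=\bigl(\prod_{i>j}\cos\theta_i\bigr)e_j$ (the paper's Lemma \ref{prod}~$(i)$) together with the duality relations (\ref{36,6}) and (\ref{1}) to see that the Jacobian is approximately diagonal with nondegenerate entries when $\cos\hat\theta_i\ge 3/4$, and conclude by the implicit function theorem with $O(\epsilon)$ control on the remainders. Your substitution $b=\arg\tanh d$ is a clean way to avoid tracking the $1/(1-d^2)$ weights explicitly, and it matches the form in which the conclusion is stated, but this is a cosmetic change; the paper carries the weight through instead. (One small remark: the paper's final determinant display writes the diagonal product as a sum, which is evidently a typo — your product $\tfrac{2\kappa_0}{p-1}\,\cos\theta_3(\cos\theta_4)^2\cdots(\cos\theta_m)^{m-2}$ is what is intended.)
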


In order to prove this proposition, we need the following estimates on the matrix $A_j$ given in $(\ref{A_i})$ and $(\ref{61,5})$:
\begin{lem}[{\bf Orthogonality and continuity results related to the matrix $A_i$ (\ref{A_i})}]\label{prod}$ $\\
i) For any $i\in \{2,...,m\}$, 
 \begin{eqnarray*}
 A_i e_1=( \prod\limits_{j =i+1}^{m} \cos\theta_j ) e_i
 \end{eqnarray*}
ii) For any  $i\in \{2,...,m\}$, $z\in \mathbb{R}^m$, we have
$$ |A_i (z) |\le |z| .$$
\end{lem}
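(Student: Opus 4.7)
The plan is to exploit the product structure of $R_\theta = R_2 R_3 \cdots R_m$ to reduce $A_j$ to a single elementary building block, whose action on $e_1$ and whose operator norm are trivial to read off.

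\textbf{Step 1 (simplification of $A_j$).} Differentiating the product with the Leibniz rule, only the $j$-th factor contributes, so
$\partial_{\theta_j} R_\theta = R_2 \cdots R_{j-1}\, \partial_{\theta_j} R_j\, R_{j+1}\cdots R_m.$
Combining with $R_\theta^{-1} = R_m^{-1}\cdots R_2^{-1}$, the cancellations $R_k^{-1} R_k = I$ for $k = 2,\ldots, j-1$ collapse the middle block and yield
$$A_j \;=\; \bigl(R_m^{-1}\cdots R_{j+1}^{-1}\bigr)\, B_j\, \bigl(R_{j+1}\cdots R_m\bigr),\qquad B_j := R_j^{-1}\,\partial_{\theta_j} R_j.$$

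\textbf{Step 2 (the elementary matrix $B_j$).} A direct calculation in the $(e_1,e_j)$-plane (where $R_j$ and $\partial_{\theta_j}R_j$ act nontrivially) gives the simple antisymmetric action
$$B_j e_1 = e_j,\qquad B_j e_j = -e_1,\qquad B_j e_k = 0 \text{ for } k\notin\{1,j\}.$$
In particular, for every $w=(w_1,\ldots,w_m)\in\mathbb{R}^m$ one has $B_j w = w_1 e_j - w_j e_1$, hence $|B_j w|^2 = w_1^2 + w_j^2 \le |w|^2$.

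\textbf{Step 3 (proof of (i)).} Compute $v := R_{j+1}\cdots R_m\, e_1$ inductively: since each $R_k$ with $k>j$ rotates only in the $(e_1,e_k)$-plane and fixes the other basis vectors, one sees by induction on $k$ starting from $R_m e_1 = \cos\theta_m\, e_1 + \sin\theta_m\, e_m$ that the $e_1$-coordinate of $v$ equals $\prod_{k=j+1}^m \cos\theta_k$ while the remaining components lie in $\mathrm{span}(e_{j+1},\ldots,e_m)$; crucially $v$ has \emph{no} $e_j$-component (because $k>j$). By Step 2, $B_j v = \bigl(\prod_{k=j+1}^m \cos\theta_k\bigr)\, e_j$. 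Finally each $R_k^{-1}$ with $k>j$ fixes $e_j$ (it acts only in the $(e_1,e_k)$-plane with $k\neq j$), so the outer left factor is the identity on $e_j$, giving $A_j e_1 = \bigl(\prod_{k=j+1}^m \cos\theta_k\bigr) e_j$, as claimed.

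\textbf{Step 4 (proof of (ii)).} Both $R_{j+1}\cdots R_m$ and $R_m^{-1}\cdots R_{j+1}^{-1}$ are orthogonal, so they preserve the Euclidean norm. Setting $w = R_{j+1}\cdots R_m\, z$, one has $|w|=|z|$, and
$$|A_j z| \;=\; \bigl|B_j w\bigr| \;\le\; |w| \;=\; |z|,$$
using the norm estimate from Step 2. No step is really an obstacle here; the only delicate point is the bookkeeping in Step 1 (that the cancellations really reduce $A_j$ to the stated sandwich form) and the observation in Step 3 that $v$ has zero $e_j$-component, which is what makes the product of cosines appear cleanly.
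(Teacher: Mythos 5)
Your proof is correct, and for part (i) it takes a genuinely different — and substantially cleaner — route than the paper. Both you and the paper begin from the telescoping simplification of $A_j$ obtained by differentiating the product $R_\theta = R_2\cdots R_m$, and for part (ii) the two arguments essentially coincide (the outer factors are orthogonal and the inner block reduces the norm because it is a composition of a projection and a rotation). But for part (i) the paper does not use this sandwich structure at all: it writes out $R_\theta$ as an explicit $m\times m$ matrix, computes the three inner products $\langle R_\theta e_j,\partial_{\theta_i}R_\theta e_1\rangle$, $\langle R_\theta e_1,\partial_{\theta_i}R_\theta e_1\rangle$, $\langle R_\theta e_i,\partial_{\theta_i}R_\theta e_1\rangle$ entry-by-entry, and then kills the resulting sums with three separate telescoping inductions (Lemmas \ref{lemrec}, \ref{lemrec1}, \ref{lemrec2}, \ref{lemrec3}). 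Your argument replaces all of that with one structural observation: since every $R_k$ with $k>j$ rotates only in the $(e_1,e_k)$-plane, the vector $v=R_{j+1}\cdots R_m e_1$ has $e_1$-coordinate $\prod_{k>j}\cos\theta_k$, remaining components in $\mathrm{span}(e_{j+1},\ldots,e_m)$, and in particular a vanishing $e_j$-coordinate; combined with $B_j w = w_1 e_j - w_j e_1$ and the fact that the outer factors $R_k^{-1}$ ($k>j$) fix $e_j$, this gives (i) in two lines. This buys a shorter, conceptually transparent proof in which the product-of-cosines appears from a single induction on the $e_1$-coordinate rather than from algebraic identities among trigonometric sums; the price, if any, is that it does not produce the explicit closed form (\ref{n150})–(\ref{n151,5}) for the entries of $R_\theta$, which the paper records but does not actually need elsewhere.
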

\begin{proof}
The proof is straightforward though a bit technical. For that reason, we give it in Appendix \ref{AA}
\end{proof}

Now, we are ready to prove Proposition  \ref{5.11}.
\begin{proof}[Proof of Proposition \ref{5.11}]
 The proof is similar to the complex-valued case. However, since our notations are somehow complicated, we give details for the reader's convenience.\\
  First, we recall that $\theta=(\theta_2,\theta_3,...,\theta_m)\in\mathbb{R}^{m-1}$. \\From (\ref{barpi}) and (\ref{pi}), we see that the condition (\ref{167}) becomes $\Phi(v, d, \theta )=0 $ 
where $ \Phi  \in C(\mathcal{H}\times (-1,1)\times\mathbb{R}^{m-1}, \mathbb{R}^m) $ is defined by
\begin{equation}
\begin{array}{c}\label{172}
\Phi (v, d, \theta)=\begin{pmatrix}\bar \Phi (v, d, \theta)\\\tilde \Phi_2 (v, d, \theta)\\\vdots\\\tilde \Phi_m (v, d, \theta)\end{pmatrix}=\begin{pmatrix}\phi\left(\begin{pmatrix}V_{1,1}\\V_{2,1} \end{pmatrix}-\begin{pmatrix}\kappa(d,.)\\0 \end{pmatrix},\bar W_0^d\right)\\
  \phi\left(\begin{pmatrix}V_{1,j}\\V_{2,j} \end{pmatrix},\tilde W_0^d\right)_{j=2...m}                                                                                                                                                                                                                                                                                                                            

\end{pmatrix}
\end{array}
\end{equation}
where $V=\begin{pmatrix}
          V_1\\V_2
         \end{pmatrix}
\in\mathbb{R}^m\times\mathbb{R}^m$ is given by $V=R^{-1}_\theta v$.\\
We claim that we can apply the implicit function theorem to $\Phi$ near the point $(\hat{v},\hat{d},\hat{\theta})$ with
$\hat{v}=R_{\hat \theta}(\kappa(\hat d,.)e_1,0)$.  Three facts have to be checked:\\
1-First, note that $\hat v=R_{\hat \theta}^{-1}(\hat v)$, hence
\begin{equation*}
\Phi (R_{\hat \theta}(\kappa(\hat d,.)e_1,0),\hat d,\hat \theta
)=0.
\end{equation*}
2-Then, we compute from (\ref{172}), for all $u\in \mathcal{H}$, 
\begin{equation*}
 D_v \bar \Phi (v,d,\theta)(u)=\phi(\begin{pmatrix}U_{1,1}\\U_{2,1} \end{pmatrix},\bar W_0^d),
\end{equation*}
and for all $ j=2...m$, we have
\begin{equation*}
D_v \tilde \Phi_j (v,d,\theta)(u)=\phi(\begin{pmatrix}U_{1,j}\\U_{2,j} \end{pmatrix},\tilde W_0^d) ,
\end{equation*}
so we have from (\ref{36,5}) and (\ref{47,5})
\begin{equation}\label{175}
 ||D_v \bar \Phi (v,d,\theta)||\le C_0 \mbox{ and } ||D_v \tilde \Phi_j (v,d,\theta)||\le C_0.
\end{equation}\\
3-Let $J(\bar \Phi,\tilde \Phi_{j,j=2..m})$ the jacobian matrix of $\Phi$ with respect to $(d,\theta)$, and $D$ its determinant so
\begin{eqnarray}\label{jac}
 J\equiv \begin{pmatrix}
\partial_d \bar \phi&\partial_{\theta_2} \bar \phi&\cdots &\partial_{\theta_m} \bar \phi\\
\partial_d \tilde \phi_2&\partial_{\theta_2} \tilde \phi_2&\cdots &\partial_{\theta_m} \tilde \phi_2\\
 \vdots&\vdots&\vdots&\vdots\\
\partial_d \tilde \phi_m&\partial_{\theta_2} \tilde \phi_m&\cdots &\partial_{\theta_m} \tilde \phi_m\\
\end{pmatrix}.
\end{eqnarray}
Then, we compute from (\ref{172}):
\begin{eqnarray}
 \label{69,5} \partial_d \bar \Phi&=&-\phi((\partial_d \kappa(d,.),0),\bar W_0^d)+\phi(\begin{pmatrix}V_{1,1}\\V_{2,1} \end{pmatrix}-\begin{pmatrix}\kappa(d,.)\\0 \end{pmatrix}, \partial_d \bar W_0^d),  \end{eqnarray}
 and for $i,j=2,..m$
  \begin{eqnarray}
  \partial_d \tilde \Phi_j&=&\phi(\begin{pmatrix}V_{1,j}\\V_{2,j} \end{pmatrix},\partial_d \tilde W_0^d),\label{ddj}\\
 \partial_{\theta_i} \bar \Phi&=&\phi( \partial_{\theta_i} \begin{pmatrix}V_{1,1}\\V_{2,1} \end{pmatrix},\bar W_0^d)=\phi(  \begin{pmatrix}  <e_1,  \frac{\partial R_\theta^{-1}}{ \partial{\theta_i} }v_{1}>\\ <e_1,  \frac{\partial R_\theta^{-1}}{ \partial{\theta_i} }v_{2}> \end{pmatrix},\bar W_0^d),\label{dtj}\\
\partial_{\theta_i} \tilde \Phi_j&=&\phi(\begin{pmatrix}  <e_j,  \frac{\partial R_\theta^{-1}}{ \partial{\theta_i} }v_{1}>\\ <e_j,  \frac{\partial R_\theta^{-1}}{ \partial{\theta_i} }v_{2}> \end{pmatrix},\partial_d \tilde W_0^d).\label{dtjj}\\
\end{eqnarray}

Now, we assume that
  \begin{eqnarray}\label{avion2}
|\theta|+\big|\log \left(\frac{1+d}{1-d}\right)-  \log \left(\frac{1+\hat d}{1-\hat d}\right)\big|+ \big|\big|  v -R_{\hat \theta}\begin{pmatrix}    \kappa(\hat d,.)e_1\\0     \end{pmatrix}  \big|\big|_{\mathcal{H} }\le \epsilon_1
  \end{eqnarray}
for some small $\epsilon_1>0$.\\
In the following, we estimate each of the derivatives whose  expressions where given above.\\
- Since $$\begin{pmatrix}   \partial_d \kappa(d,y)\\0\end{pmatrix}=\frac{-2\kappa_0}{(p-1)(1-d^2)}\bar F_0^d,$$
by definiftion (\ref{110}) and (\ref{defk}), it follows from the orthogonality condition (\ref{36,6}) that

$$\phi((\partial_d \kappa(d,.),0),\bar W_0^d)=\frac{-2\kappa_0}{(p-1)(1-d^2)}.$$
Therefore, from (\ref{69,5}), we write
\begin{eqnarray}
 \label{69,55} \partial_d \bar \Phi=\frac{2\kappa_0}{(p-1)(1-d^2)}+\phi(\begin{pmatrix}V_{1,1}\\V_{2,1} \end{pmatrix}-\begin{pmatrix}\kappa(d,.)\\0 \end{pmatrix}), \partial_d \bar W_0^d).
  \end{eqnarray}
  Since $\begin{pmatrix}V_{1,1}\\V_{2,1} \end{pmatrix}=\begin{pmatrix}  <e_1,  R_\theta^{-1}v_{1}>\\ <e_1,   R_\theta^{-1}v_{2}> \end{pmatrix},$ we write
\begin{eqnarray}
 \label{69,6} &\;&\big|\big| \begin{pmatrix}V_{1,1}\\V_{2,1} \end{pmatrix}- \begin{pmatrix}\kappa(d,.)\\0 \end{pmatrix}  \big|\big|_{\mathcal{H} }\le    \big|\big| R_\theta^{-1}v-\begin{pmatrix}\kappa(d,.)e_1\\0 \end{pmatrix}  \big|\big|_{\mathcal{H} }\\\notag &\le&\big|\big|           ( R_\theta^{-1}- R_{\hat \theta}^{-1})  v \big|\big|_{\mathcal{H} }+\big|\big|   R_{\hat \theta}^{-1}  v-\begin{pmatrix}\kappa(\hat d,.)e_1\\0 \end{pmatrix}     \big|\big|_{\mathcal{H} }+  \big|\big|  \kappa(\hat d,.)-\kappa(d,.)      \big|\big|_{\mathcal{H} }.
  \end{eqnarray}
Since,
$$\forall \theta, \theta' \in \mathbb{R}, \big| R_{\theta}-R_{\theta'}\big|+\big| R_{\theta}^{-1}-R_{\theta'}^{-1}\big| \le C  \big| \theta-\theta' \big|, $$
(see (\ref{jac}) below for $R_{\theta}$, and use an adhoc change of variables for $R_{\theta}^{-1}$), recalling the following continuity result from estimate $(174)$ page $101$ in \cite{MR2362418}:
\begin{eqnarray}\label{avion}
\big|\big|   \kappa(d_1,.)-  \kappa(d_2,.)    \big|\big|_{\mathcal{H}_0 }\le C\big|  \left(\frac{1+d_1}{1-d_1}\right)- \left(\frac{1+d_2}{1-d_2}\right)\big| ,
  \end{eqnarray}
we see from the Cauchy-Schwartz inequality, (\ref{69,6}), (\ref{36,5}) and (\ref{avion2}) that
\begin{eqnarray}
 \label{999} \big|\big| \begin{pmatrix}V_{1,1}\\V_{2,1} \end{pmatrix}- \begin{pmatrix}\kappa(d,.)\\0 \end{pmatrix}  \big|\big|_{\mathcal{H} }\le    \big|\big| V-\begin{pmatrix}\kappa(d,.)e_1\\0 \end{pmatrix}  \big|\big|_{\mathcal{H} }\            \le C \epsilon_1.
  \end{eqnarray}
\begin{eqnarray}
 \label{1000} \big|  \partial_d \bar \Phi-\frac{2\kappa_0}{(p-1)(1-d^2)}\big|  \le C \epsilon_1.
  \end{eqnarray}
  - Since
$$ \begin{pmatrix}V_{1,j}\\V_{2,j} \end{pmatrix}=\begin{pmatrix}  <e_j,  R_\theta^{-1}v_{1}>\\ <e_j,   R_\theta^{-1}v_{2}> \end{pmatrix},$$
we write
\begin{eqnarray}
   \big|\big|    \begin{pmatrix}V_{1,j}\\V_{2,j} \end{pmatrix}     \big|\big|_{\mathcal{H} }\le \big|\big|   R_{\theta}^{-1}  v-\begin{pmatrix}\kappa(d,.)e_1\\0 \end{pmatrix}     \big|\big|_{\mathcal{H} }\le C \epsilon_1
  \end{eqnarray}
by the same argument as for (\ref{69,6}). Using the Cauchy-Schwarz inequality together with (\ref{47,5}), we see from (\ref{ddj}) that 
\begin{eqnarray}
 \label{1002} \big|  \partial_d \tilde \Phi_j\big|  \le \frac{C \epsilon_1}{1-d^2}.
  \end{eqnarray}
From (\ref{61,5}), we see that $\frac{\partial R_\theta^{-1}}{\partial \theta_i}= A_i R_\theta^{-1}$. Therefore using $ii)$ of Lemma \ref{prod} and the fact that the rotation $R_\theta$ does not change the norm in $\mathcal{H}$, we write
\begin{eqnarray}
 \label{} \big|     \big| \frac{\partial R_{\theta}^{-1}}{\partial \theta_i}(v)-  \begin{pmatrix}\kappa(d,.)A_i (e_1)\\0 \end{pmatrix}              \big|\big|_{\mathcal{H} }\le \big| \big| v- R_{\theta} \begin{pmatrix}\kappa(d,.)e_1\\0 \end{pmatrix} \big|  \big| _{\mathcal{H} }\le C \epsilon_1,
  \end{eqnarray}
  by the same argument as for (\ref{69,6}). Using the Cauchy-Schwarz identity together with (\ref{36,5}), we see from (\ref{dtj}) that
  \begin{eqnarray}
 \label{1001} \big|  \partial_{\theta_i} \bar \Phi\big|  \le C \epsilon_1.
  \end{eqnarray}
  - By the same argument as for (\ref{1001}), we obtain from (\ref{dtjj})
   \begin{eqnarray}
 \label{1003} \big|  \partial_{\theta_i} \tilde \Phi_j\big|  \le C \epsilon_1 \mbox{ if }i\neq j.
  \end{eqnarray} 
  Now, if $i= j$, noting from (\ref{61,5}) that $\frac{\partial R_\theta^{-1}}{\partial \theta_i}v=A_i R_\theta^{-1}(v)=A_i  V,$ applying the operator $A_i$ to (\ref{999}), then taking the scalar product with $e_i$, we see from Lemma \ref{prod} that
  
  \begin{eqnarray*}
 \label{} \big|     \big|  \begin{pmatrix} <e_i, \frac{\partial R^{-1}_{\theta}}{\partial \theta_i} v_{1}>  \\ <e_i, \frac{\partial R^{-1}_{\theta}}{\partial \theta_i} v_{2}>\end{pmatrix}     - \begin{pmatrix}\kappa(d_i,.)\prod\limits_{k=i+1}^{m}\cos \theta_k\\0 \end{pmatrix}            \big|\big|_{\mathcal{H} }\le C \epsilon_1.
  \end{eqnarray*}
  Since we know from (\ref{F}) and (\ref{1}) that
 $$\phi(\kappa(d,.), \tilde W_0^d)=1,$$
it follows from (\ref{dtjj}) that
  \begin{eqnarray}
 \label{1004} \big|  \partial_{\theta_i} \tilde \Phi_i -\prod\limits_{k=i+1}^{m}\cos \theta_k\big|  \le C \epsilon_1.
  \end{eqnarray}
Collecting (\ref{1000}),  (\ref{1002}), (\ref{1001}), (\ref{1003}) and  (\ref{1004}) we see that
  \begin{eqnarray*}
 \label{} \big| D-\frac{2\kappa_0}{(p-1)(1-d^2)} -\cos \theta_3 (\cos \theta_4)^2...  (\cos \theta_m)^{m-2}\big|  \le  \frac{C \epsilon_1}{1-d^2}.
  \end{eqnarray*}
Since
$$\cos \theta_i \ge \frac{3}{4}$$
by hypothesis, we have the non-degeneracy of $\Phi$ (voir (\ref{172})) near the point $(\hat v,\hat d,\hat \theta)$ with $\hat v= R_{\hat {\theta}}(\kappa(\hat d,.)e_1,0)$. Applying the implicit function theorem, we conclude the proof of Proposition \ref{5.11}.
\end{proof}

\medskip

\subsection{Dynamics of $q$, $d$ and $\theta$}
Let us apply Proposition \ref{5.11} with $v=(w,\partial_s w) (s^*)$, $\hat d=d^*$ and $\hat \theta=0$. Clearly, from (\ref{56,5}) and (\ref{53,5}), we have $ ||\hat q||_{\mathcal{H}}\le \epsilon^*.$ Assuming that
$$\epsilon^*\le \epsilon_0$$
defined in Proposition \ref{5.11}, we see that the proposition applies, and from the continuity of $(w,\partial_s w)$ from $[s^*,\infty)$ to $\mathcal{H}$, we have a maximal $\bar s> s^*$, such that $(w(s),\partial_s w(s))$ can be modulated in the sense that
\begin{eqnarray}\label{ro}
\begin{pmatrix} w(y,s)\\\partial_s w(y,s) \end{pmatrix} =R_{\theta (s)}\left[\begin{pmatrix} \kappa(d(s),y)e_1\\0\end{pmatrix} +\begin{pmatrix} q_1(y,s)\\q_2(y,s)\end{pmatrix}\right],
\end{eqnarray}
where the parameters $d(s)\in (-1,1)$ and $\theta(s)=(\theta_2(s),..., \theta_m(s))$ are such that for all $s\in [s^*, \bar s)$
\begin{equation}\label{167,7}
\bar\pi_{0}^{d(s)}\begin{pmatrix} q_{1,1}(s)\\ q_{2,1}(s)\end{pmatrix} =0,\mbox{ and }\tilde\pi_{0}^{d(s)}\begin{pmatrix} q_{1,j}(s)\\ q_{2,j}(s)\end{pmatrix} =0,\; \forall j=2,..m,
\end{equation}
and
\begin{equation}\label{179}
\forall i=2,...,m\; \cos \theta_i(s)\ge \frac{1}{2} \mbox{ and }   ||q(s)||_{\mathcal{H}}\le \epsilon\equiv 2K_0 K_1 \epsilon^*,   
\end{equation}
where $K_1>0$ is defined in Proposition \ref{5.11} and $K_1>1$ is a constant that will be fixed below in (\ref{259}).

\medskip

Two cases then arise:\\
- Case 1: $\bar s=+\infty;$\\
- Case 2: $\bar s<+\infty;$ in this case, we have an equality case in (\ref{179}), i.e. $cos \theta_i(\bar s)= \frac{1}{2} $ for some $i=2,...,m$, or $ ||q(\bar s)||_{\mathcal{H}}=  2K_0 K_1 \epsilon^*$.\\
At this stage, we see that controlling the solution $(w(s),\partial_s w (s))\in \mathcal{H} $ is equivalent to controlling $q\in \mathcal{H}$, $d\in (-1,1)$ and $\theta(s)\in \mathbb{R}^{m-1}$.
\medskip

Before giving the dynamics of this parameters, we need to introduce some notations.\\
From (\ref{167,7}), we will expand $\bar q$ and $\tilde q$ respectively according to the spectrum of the linear operators $\bar L_d$ and $\tilde L_d$ as in (\ref{ma}) and (\ref{s}):
\begin{align}\label{180}
 \begin{pmatrix}  q_{1,1}(y,s)\\ q_{2,1}(y,s) \end{pmatrix}&=\alpha_{1,1} \bar F_1^d (y)+\begin{pmatrix}  q_{-,1,1}(y,s)\\ q_{-,2,1} (y,s)\end{pmatrix} \\
\label{180*}
\forall j \in \{1,...,m\},\; \begin{pmatrix} q_{1,j}(y,s)\\ q_{2,j} (y,s)\end{pmatrix}&=\begin{pmatrix}  q_{-,1,j}(y,s)\\ q_{-,2,j}(y,s) \end{pmatrix}
\end{align}
where
\begin{equation}\label{181}
\alpha_{1,1} = \bar\pi_1^{d(s)} \begin{pmatrix}  q_{1,1}\\ q_{2,1} \end{pmatrix},\;\alpha_{0,1} = \bar\pi_0^{d(s)} \begin{pmatrix}  q_{1,1}\\ q_{2,1} \end{pmatrix}=0,\;\alpha_{-,1}(s)=\sqrt{\bar\varphi_d (\begin{pmatrix}  q_{-,1,1}\\ q_{-,2,1} \end{pmatrix},\begin{pmatrix}  q_{-,1,1}\\ q_{-,2,1} \end{pmatrix})}
\end{equation}
\begin{equation}\label{181'}
\alpha_{0,j}= \tilde\pi_0^{d(s)}\begin{pmatrix} q_{1,j}\\ q_{2,j} \end{pmatrix}=0,\;\alpha_{-,j}(s)=\sqrt{\tilde\varphi_d (\begin{pmatrix}  q_{-,1,j}\\ q_{-,2,j} \end{pmatrix},\begin{pmatrix}  q_{-,1,j}\\ q_{-,2,j} \end{pmatrix})}
\end{equation}
and
\begin{equation*}
\begin{pmatrix}  q_{-,1,1}\\ q_{-,2,1} \end{pmatrix}=\bar\pi_{-}^{d}\begin{pmatrix} q_{1,1}\\q_{2,1} \end{pmatrix}
\end{equation*}
\begin{equation*}
\forall j \in \{1,...,m\},\;\begin{pmatrix}  q_{-,1,j}\\ q_{-,2,j} \end{pmatrix}=\tilde\pi_{-}^{d}\begin{pmatrix} q_{1,j}\\q_{2,j} \end{pmatrix}
\end{equation*}
From (\ref{180}), (\ref{180*}), (\ref{36,55}) (\ref{36,7}) and (\ref{50.5}), we see that for all $s \ge s_0$,
\begin{eqnarray}\label{183}
\notag
 \frac{1}{C_0} \alpha_{-,1}(s) &\le& ||\begin{pmatrix}  q_{-,1,1}\\ q_{-,2,1} \end{pmatrix}||_{\mathcal{H} } \le C_0  \alpha_{-,1}(s)\\ 
 \frac{1}{C_0}(| \alpha_{1,1}(s)|+ \alpha_{-,1}(s)) &\le& || \begin{pmatrix}  q_{1,1}\\ q_{2,1} \end{pmatrix}||_{\mathcal{H} } \le C_0(| \alpha_{1,1}(s)|+ \alpha_{-,1}(s)) \\
 \frac{1}{C_0} \alpha_{-,j}(s) &\le& || \begin{pmatrix}  q_{1,j}\\ q_{2,j} \end{pmatrix}||_{\mathcal{H} } \le C_0 \alpha_{-,j}(s) \notag
\end{eqnarray}
for some $C_0 > 0$. In the following proposition, we derive from (\ref{170}) and (\ref{170'}) differential inequalities satisfied by $ \alpha_{1,1}(s)$, $ \alpha_{-,1}(s)$, $ \alpha_{-,j}(s)$, $\theta_i(s)$ and $d(s)$.
Introducing
\begin{equation}\label{209}
R_{-}(s)=-\int_{-1}^{1} \mathcal{ F}_{d }(q_1) \rho dy,
\end{equation}
where 
\begin{equation}\label{*F*}
\mathcal{F}_{d(s)} (q_1 (y,s))=\frac{|\kappa(d ,\cdot)e_1+q_1|^{p+1}}{p+1}-\frac{\kappa(d ,\cdot)^{p+1}}{p+1}-\kappa(d ,\cdot)^p q_{1,1}-\frac{p}{2} \kappa(d ,\cdot)^{p-1}  q_{1,1}^2-\frac{\kappa(d ,\cdot)^{p-1}}{2} \sum_{j=2}^{m} q_{1,j}^2,
\end{equation}
we claim the following

\begin{prop}{\bf (Dynamics of the parameters)}\label{5.2}
For $\epsilon^*$ small enough and for all $s\in [s^*,\bar s)$, we have:

\item{(i)} {\bf (Control of the modulation parameter)}
\begin{equation}\label{184}
\sum_{i=2}^{m}|\theta_i'|+\frac{|d'|}{1-d^2}\le C_0 || q||_{\mathcal{H}}^2.
\end{equation}
\item{(ii)} {\bf (Projection of equation (\ref{170}) on the different eigenspaces of $\bar L_d$ and $\tilde L_d$)}
\begin{align}\label{185}
| \alpha_{1,1}'(s) - \alpha_{1,1}(s)|&\le C_0 || q||_{H}^2.
\\\label{186}
\left( R_{-}+\frac{1}{2}( \alpha_{1,1}^2+ \alpha_{-,j}^2)\right)'&\le -\frac{4}{p-1}\int_{-1}^{1}(q_{-,2,1}^2+ q_{-,2,j}^2)\frac{\rho}{1-y^2}dy+ C_0 ||q(s)||_{\mathcal{H}}^{3},
\end{align}
for $j\in \{2,...,m\}$ and $R_{-}(s)$ defined in $(\ref{209})$, satisfying
\begin{equation}\label{187}
|R_{-}(s)|\le C_0 ||q(s)||_{\mathcal{H}}^{1+\bar p} \mbox{ where } \bar p=\min(p,2)>1.
\end{equation}
\item{(iii)} {\bf (An additional relation)}
\begin{equation}\label{188}
\frac{d}{ds}\int_{-1}^{1}  q_{1,1} q_{2,1} \rho \le -\frac{4}{5}\bar \alpha_{-,1}^2+ C_0\int_{-1}^{1} q_{-,2,1}^2\frac{\rho}{1-y^2}+ C_0  ||q(s)||_{\mathcal{H}}^{2}.
\end{equation}
For $j\in \{2,...,m\},$ we have:
\begin{equation}\label{188'}
\frac{d}{ds}\int_{-1}^{1}  q_{1,j}  q_{1,j} \rho \le -\frac{4}{5}\tilde \alpha_{-,j}^2+ C_0\int_{-1}^{1}  q_{2,j}^2\frac{\rho}{1-y^2}+ C_0  ||q(s)||_{\mathcal{H}}^{2}.
\end{equation}
\item{(iv)} {\bf (Energy barrier)}
\begin{equation}
 \alpha_{1,1} (s)\le C_0\alpha_{-,1}(s)+ C_1  \sum_{j=2}^{m} \alpha_{-,j}(s).\label{189}
\end{equation}
\end{prop}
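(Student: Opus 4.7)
The plan is to substitute the modulation ansatz (\ref{ro}) into equation (\ref{equa}) to derive the PDE governing $q=(q_1,q_2)$. This PDE takes the form (\ref{linearise-complexe}) perturbed by two families of correction terms: one proportional to $d'(s)$ coming from differentiating $\kappa(d(s),\cdot)e_1$, and a second proportional to $\theta_i'(s)$ coming from differentiating $R_{\theta(s)}$, which by (\ref{A_i})--(\ref{61,5}) produces factors of $A_i$ acting on $\kappa(d,\cdot)e_1+q$. By Lemma \ref{prod}(i), $A_i$ sends $\kappa(d,\cdot)e_1$ into a multiple of $\kappa(d,\cdot)e_i$, so the principal part of the $\theta_i'$-correction lives in the $i$-th coordinate, while Lemma \ref{prod}(ii) controls its $\mathcal{H}$-size uniformly.

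For (i), I differentiate the orthogonality conditions (\ref{167,7}) in $s$ and substitute the PDE for $q$. The terms involving $\bar L_d q$ and $\tilde L_d q$ disappear after projection onto $\bar W_0^d$ and $\tilde W_0^d$ by duality (since those are eigenfunctions of $\bar L_d^*$ and $\tilde L_d^*$ with eigenvalue $0$). What remains is an $m\times m$ linear system for $(d'/(1-d^2),\theta_2',\ldots,\theta_m')$, whose matrix is, up to an $O(\|q\|_\mathcal{H})$ error, exactly the Jacobian $J$ in (\ref{jac}) shown to be non-degenerate at the end of the proof of Proposition \ref{5.11}. Its right-hand side is quadratic in $q$: the would-be linear-in-$q$ contributions from $\partial_s \bar W_0^{d(s)}$ and $\partial_s \tilde W_0^{d(s)}$ carry an explicit $d'$ factor and are reabsorbed into the matrix using (\ref{36,5}) and (\ref{47,5}), while the nonlinearities $f_{d,1}(q_1),f_{d,j}(q_1)$ from (\ref{barf_d})--(\ref{tildef_d}) are $O(\|q\|_\mathcal{H}^{1+\bar p})$ by Taylor expansion. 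Inverting the system yields (\ref{184}).

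For (ii), I project equation (\ref{159.5}) against $\bar W_1^d$ to obtain $\alpha_{1,1}'$: the linear part gives $\alpha_{1,1}$ by duality, while the $d'$- and $\theta_i'$-corrections are $O(\|q\|_\mathcal{H}^2)$ by (i) together with (\ref{36,5}), and the nonlinearity contributes $O(\|q\|_\mathcal{H}^2)$ via (\ref{barf_d}), giving (\ref{185}). For (\ref{186}), I recognize $R_{-}+\tfrac{1}{2}(\alpha_{1,1}^2+\alpha_{-,j}^2)$ as the quadratic-plus-remainder expansion of $E(w,\partial_s w)-E(\kappa_0,0)$ around the stationary solution $\kappa(d,\cdot)e_1$ using (\ref{15}), (\ref{*F*}) and (\ref{14}); since $E$ is a Lyapunov functional satisfying the Antonini--Merle identity $\tfrac{d}{ds}E=-\tfrac{4}{p-1}\int(\partial_s w)^2\rho/(1-y^2)\,dy$, and since $\partial_s w$ decomposes via (\ref{167,7}) and (\ref{180})--(\ref{180*}) into $q_{-,2,1}e_1+\sum_{j\ge 2}q_{-,2,j}e_j$ plus modulation corrections controlled by (i), differentiating the expansion produces the claimed dissipative right-hand side plus cubic errors. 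Estimate (\ref{187}) is a direct Taylor bound on (\ref{*F*}) using $||\kappa(d,\cdot)e_1+q_1|^{p+1}-|\kappa(d,\cdot)e_1|^{p+1}-\ldots|\le C|q_1|^{1+\bar p}(1+|q_1|^{p-1-\bar p})$.

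For (iii), I compute $\tfrac{d}{ds}\int q_{1,1}q_{2,1}\rho\,dy$ via (\ref{159.5}): the $\int q_{2,1}^2\rho$ term has the good sign, while the remaining piece integrates by parts into $-\bar\varphi_d((q_{1,1},q_{2,1}),(q_{1,1},q_{2,1}))$ with $\bar\varphi_d$ as in (\ref{134'}). Splitting via (\ref{180}) into the $\alpha_{1,1}\bar F_1^d$ mode and the $\bar\pi_-$ part and invoking coercivity (\ref{36,55}) on the latter, I absorb the bad $+\alpha_{1,1}^2$ contribution using the energy barrier (iv), which gives the $-\tfrac{4}{5}\alpha_{-,1}^2$; modulation and nonlinear errors fit inside the $C_0\|q\|_\mathcal{H}^2$ tolerance. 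The identity (\ref{188'}) is analogous via (\ref{134}) and (\ref{50.5}), and is in fact simpler since $\tilde L_d$ has no unstable eigenvalue. Finally, (iv) follows by plugging the decomposition (\ref{180})--(\ref{181'}) (with $\alpha_{0,\cdot}=0$ by the orthogonality) into $E(w,\partial_s w)\ge E(\kappa_0,0)=E(\kappa(d,\cdot)e_1,0)$: the quadratic expansion of $E$ contributes $-\alpha_{1,1}^2$ on the growing mode and $+\alpha_{-,1}^2+\sum_{j\ge 2}\alpha_{-,j}^2$ on the stable modes by (\ref{36,55}) and (\ref{50.5}), while the remainder is controlled by (\ref{187}); rearranging gives (\ref{189}) for $\epsilon^*$ small. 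The main obstacle throughout is the systematic bookkeeping of the cross-coupling terms that the rotations $A_i$ induce between the first coordinate (where $\bar L_d$ acts) and the coordinates $j\ge 2$ (where $\tilde L_d$ acts): one must verify via Lemma \ref{prod} that these off-diagonal contributions remain at least quadratic in $q$, so that they fit inside the error budgets of (i)--(iv) without spoiling the sign of the dissipation or the invertibility of the modulation matrix.
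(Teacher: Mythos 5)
Your proposal follows essentially the same path as the paper: both derive the modulated equations (\ref{170})--(\ref{170'}) for $q$, project them onto the eigenfunctions $\bar W_\lambda^d,\tilde W_0^d$ of $\bar L_d^*,\tilde L_d^*$ (which is equivalent to differentiating the orthogonality conditions (\ref{167,7}) as you describe), use Lemma \ref{prod} to keep the rotation-induced terms $A_i(\kappa_d e_1+q_1)$ quadratic in $q$, and obtain (iv) from the expansion (\ref{228}) of $E$ combined with the lower bound (\ref{17}), while (ii)--(iii) are deferred to the complex-valued case as in the paper. The only cosmetic differences are that you phrase (i) as inverting the $m\times m$ Jacobian $J$ from Proposition \ref{5.11} rather than handling the $\bar\pi_0^d,\tilde\pi_0^d$ projections one by one, and that your appeal to (iv) inside the proof of (iii) is superfluous (the $\alpha_{1,1}^2$ term is already within the $C_0\|q\|_{\mathcal{H}}^2$ budget on the right of (\ref{188}) by (\ref{183})).
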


\begin{proof}\label{chdg}
The proof follows the general framework developed by Merle and Zaag in the real case (see Proposition $5.2$ in \cite{MR2362418}), then adapted to the complex-valued case in \cite{<3} 5(see Proposition $4.2$ page $5915$ in \cite{<3}). However, new ideas are needed, mainly because we have $(m-1)$ rotation parameters in the modulation technique (see Proposition \ref{5.11} above), rather than only one in the complex-valued case. For that reason, in the following, we give details only for the "new" terms, referring the reader to the earlier literature for the "old" terms.

Let us first write an equation satisfied by $q$ defined in (\ref{ro}). We put the equation (\ref{equa}) satisfied by $w$ in vectorial form:
\begin{eqnarray}\label{l1}
\begin{array}{l}
 \partial_s w_1=w_2\\
\partial_s w_2=\mathcal{L}w_1-\frac{2(p+1)}{(p-1)^2}w_1+|w_1|^{p-1}w_1-\frac{p+3}{p-1} w_2- 2 y \partial_y w_2.
\end{array}
\end{eqnarray}
 We replace all the terms of (\ref{l1}) by their expressions from (\ref{ro}). Precisely, for the terms of the right hand side of (\ref{l1})  we have: 
\begin{eqnarray}\label{l2}
\begin{array}{l}
\partial_s w_1=R_{\theta }\left(d'\partial_d \kappa e_1+\partial_s q_1\right)+\sum_{i=1}^{m}\theta_i'\frac{\partial R_\theta}{\partial \theta_j}\left( \kappa_d e_1+q_1\right),\\
\partial_s w_2=R_{\theta }\left(\partial_s q_1\right)+\sum_{i=1}^{m}\theta_i'\frac{\partial R_\theta}{\partial \theta_j}\left(q_2\right). \notag
\end{array}\end{eqnarray}
For the terms on the left hand side of (\ref{l1})  we have:

$$w_2=R_{\theta }(q_2), \,\mathcal{L}w_1=R_{\theta }( \mathcal{L}(\kappa_d e_1)+\mathcal{L} q_1),\,|w_1|=|\kappa_d e_1+ q_1|,\,\partial_y w_2=R_{\theta }\partial_y q_2.$$
Then, multiplying by $R_{\theta}^{-1}$, using the fact that $(\kappa(d ,\cdot),0)$ is a stationnary solution and dissociating the first and $j$th component of these equations, we get for all $s\in [s^*,\bar s)$, for all $j \in \{2,...,m\}$:

 \begin{align}\label{170}
 \frac{\partial }{\partial s}\begin{pmatrix}  q_{1,1}\\ q_{2,1}
 \end{pmatrix}&=\bar L_{d(s)} \begin{pmatrix}  q_{1,1}\\ q_{2,1} \end{pmatrix}+\begin{pmatrix}0\\ {f}_{d(s),1}(q_1)\end{pmatrix}-d'(s)\begin{pmatrix}\partial_d \kappa(d,y)\\ 0\end{pmatrix}-\sum_{i=2}^{m}\theta_i'(s)\begin{pmatrix}  a_{i,1,1} \\ a_{i,2,1} \end{pmatrix},
 \\
\frac{\partial }{\partial s} \begin{pmatrix}  q_{1,j}\\q_{2,j}
 \end{pmatrix}&=\tilde L_{d(s)}\begin{pmatrix} q_{1,j}\\ q_{2,j} \end{pmatrix}+\begin{pmatrix} 0\\{f}_{d(s),j}(q_1) \end{pmatrix}-\sum_{i=2}^{m}\theta_i'(s)\begin{pmatrix}  a_{i,1,j} \\ a_{i,2,j} \end{pmatrix},
 \label{170'}\end{align}
 where $\bar L_{d(s)} ,\tilde L_{d(s)}, f_{d(s),1}$ and $f_{d(s),j}$ are defined in (\ref{barL_d}), (\ref{tildeL_d}), (\ref{barf_d}) and (\ref{tildef_d}), and $a_i$ by

 \begin{eqnarray}\label{a_i}
 \begin{pmatrix} a_{i,1}\\a_{i,2}\end{pmatrix}=\begin{pmatrix}A_i (\kappa_d e_1+q_1)\\A_i (q_2) \end{pmatrix},
\end{eqnarray}
 with $a_{i,1}=(a_{i,1,1},a_{i,1,2},...,a_{i,1,m})\in \mathbb{R}^{m}$ and $a_{i,2}=(a_{i,2,1},a_{i,2,2},...,a_{i,2,m})\in \mathbb{R}^{m}$.

Let $i\in \{2,\cdots,m\}$, Projecting equation (\ref{170}) with the projector $\bar \pi_\lambda^d$ (\ref{barpi}) for $\lambda=0$ and $\lambda=1$, we write

\begin{eqnarray}\label{d0}
\bar \pi_\lambda^d (\partial_s \begin{pmatrix}  q_{1,1}\\ q_{2,1}
 \end{pmatrix})
&=&\bar \pi_\lambda^d (\bar L_{d(s)}  \begin{pmatrix}  q_{1,1}\\ q_{2,1} \end{pmatrix})+\bar \pi_\lambda^d \begin{pmatrix}0\\  {f}_{d(s),1}(q_1) \end{pmatrix}-d'(s)\bar \pi_\lambda^d \begin{pmatrix}\partial_d \kappa(d,y)\\ 0\end{pmatrix}\notag\\
&-&\sum_{i=2}^{m}\theta_i'(s)\bar \pi_\lambda^d\begin{pmatrix}  a_{i,1,1} \\ a_{i,2,1} \end{pmatrix}.
\end{eqnarray}

Note that, expect the last term, all the terms of (\ref{d0}) can be controled exactly like the real case using (\ref{179}) (for details see page 105 in \cite{MR2362418}). So, we recall that we have:\\
 \begin{eqnarray}\label{d1}
|\bar \pi_\lambda^d( \partial_s \begin{pmatrix} q_{1,1}\\q_{2,1} \end{pmatrix})-\alpha_{\lambda,1}'|\le \frac{C_0}{1-d^2} |d'||| q||_{\mathcal{ H}},
\end{eqnarray}
 \begin{eqnarray}\label{d2}
\bar \pi_\lambda^d( L_d \begin{pmatrix} q_{1,1}\\q_{2,1} \end{pmatrix})=
\lambda \alpha_{\lambda,1},
\end{eqnarray}
 \begin{eqnarray}\label{d3}
|\bar \pi_\lambda^d \begin{pmatrix}0\\  {f}_{d(s),1}(q_1) \end{pmatrix}|\le C_0|| q||_{\mathcal{ H}}^2,
\end{eqnarray}
 \begin{eqnarray}\label{d4}
|\bar \pi_\lambda^d \begin{pmatrix}\partial_d \kappa(d,y)\\ 0\end{pmatrix}|=-\frac{2\kappa_0}{(p-1)(1-d^2)}    \bar \pi_\lambda^d (F_0^d)       =-\frac{2\kappa_0}{(p-1)(1-d^2)}\delta_{\lambda,0}.
\end{eqnarray}
Now, we focus on the study of the last term of (\ref{d0}). From the definition of $a_i$ (\ref{a_i}) and $i)$ of Lemma \ref{prod}, we have:
 \begin{eqnarray*}\label{}
 \begin{pmatrix} a_{i,1,1}\\a_{i,2,1}\end{pmatrix}=\kappa_d \begin{pmatrix}<e_1,A_i (e_1)>\\0 \end{pmatrix}+\begin{pmatrix}<e_1,A_i (q_1)>\\<e_1,A_i (q_2)> \end{pmatrix}=\begin{pmatrix}<e_1,A_i (q_1)>\\<e_1,A_i (q_2)> \end{pmatrix}.
\end{eqnarray*}
Applying the projector $\bar \pi_\lambda^d$ (\ref{barpi}), we get
\begin{eqnarray}\label{juil}
|\bar \pi_\lambda^d \begin{pmatrix} a_{i,1,1}\\a_{i,2,1} \end{pmatrix}|&=&|\bar \pi_\lambda^d \begin{pmatrix}<e_1,A_i (q_1)>\\<e_1,A_i (q_2)> \end{pmatrix}|\le C ||   \begin{pmatrix}<e_1,A_i (q_1)>\\<e_1,A_i (q_2)> \end{pmatrix}  ||_{\mathcal{ H}}\notag\\
&\le& C (|| <e_1,A_i (q_1)>||_{\mathcal{ H}_0}+|| <e_1,A_i (q_2)>||_{ L_\rho^2}).
\end{eqnarray}
Using $ii)$ of Lemma \ref{prod}, we have:
\begin{eqnarray}\label{juil1}
|| <e_1,A_i (q_2)>||^2_{ L_\rho^2}=\int_{-1}^{1} <e_1,A_i (q_2)>^2\rho dy\le \int_{-1}^{1} |A_i (q_2)|^2\rho dy\le \int_{-1}^{1} |q_2|^2\rho dy,
\end{eqnarray}
and by the same way, using $ii)$ of Lemma \ref{prod} and the definition of $\mathcal{ H}_0$ (\ref{10}), we have
\begin{eqnarray}\label{juil2}
|| <e_1,A_i (q_1)>||^2_{\mathcal{ H}_0}&=&\int_{-1}^{1} <e_1,A_i (q_1)>^2\rho dy+\int_{-1}^{1} (<e_1,A_i (\partial_y q_1)>)^2(1-y^2)\rho dy\notag\\&\le&
 \int_{-1}^{1}( |q_1|^2+(1-y^2)|\partial_y q_1|^2)\rho dy.
\end{eqnarray}
From (\ref{juil}),  (\ref{juil1}) and  (\ref{juil2}), we have
\begin{eqnarray}\label{d5}
|\bar \pi_\lambda^d \begin{pmatrix} a_{i,1,1}\\a_{i,2,1} \end{pmatrix}|\le C_0 || q||_{\mathcal{ H}}.\end{eqnarray}
Using (\ref{d1}), (\ref{d1}), (\ref{d2}), (\ref{d3}), (\ref{d4}), (\ref{d5}), and the fact that $\alpha_{0,1}\equiv \alpha_{0,1}'\equiv 0$ (see (\ref{181})), we get for $\lambda=0,1$:
\begin{align}\label{A1}
\frac{2 \kappa_0}{(p-1)(1-d^2)} |d'|&\le \frac{C_0}{1-d^2}|d'| || q||_{\mathcal{ H}}+C_0 || q||_{\mathcal{ H}}^2+C_0 || q||_{\mathcal{ H}}\sum_{j=2}^{m}|\theta_i'|
\\\label{A2}
|\bar\alpha_1'(s)-\bar\alpha_1(s)|&\le\frac{C_0}{1-d^2}|d'| || q||_{\mathcal{ H}} +C_0 || q||_{\mathcal{ H}}^2+C_0|| q||_{\mathcal{ H}}\sum_{j=2}^{m}|\theta_i'|.
\end{align}
Now, projecting equation (\ref{170'}) with the projector $\tilde \pi_0^d$ (\ref{pi}), where $j\in \{2,\cdots,m\}$, we get:
\begin{eqnarray}\label{98,5}
\tilde \pi_0^d(\partial_s\begin{pmatrix}  q_{1,j}\\q_{2,j}
 \end{pmatrix} )=\tilde \pi_0^d( \tilde L_{d(s)}\begin{pmatrix} q_{1,j}\\ q_{2,j} \end{pmatrix})+\tilde \pi_0^d \begin{pmatrix} 0\\f_{d(s),j}  (q_1) \end{pmatrix}-\sum_{i=2}^{m}\theta_i'(s) \tilde \pi_0^d\begin{pmatrix}  a_{i,1,j} \\ a_{i,2,j} \end{pmatrix}.
\end{eqnarray}

From the complex-valued case we recall that we have (for details see page $5917$ in \cite{<3}, together with Lemma \ref{prod}):
\begin{equation}\label{B1}
|\tilde \pi_0^d  (\partial_s \begin{pmatrix} q_{1,j}\\ q_{2,j} \end{pmatrix})|\le \frac{C_0}{1-d^2} |d '| || q||_{\mathcal{ H}},
\end{equation}
\begin{equation}\label{B2}
\tilde \pi_0^d  (\tilde L_{d }\begin{pmatrix} q_{1,j}\\ q_{2,j} \end{pmatrix})=0,
\end{equation}
\begin{equation}\label{B3}
\Big{|}\tilde \pi_0^d  \begin{pmatrix} 0\\\tilde{f}_{d(s)}(q_1) \end{pmatrix}\Big{|} \le C_0  || q||_{\mathcal{ H}}^2,
\end{equation}

\begin{equation}\label{B4}
\tilde \pi_0^d\begin{pmatrix}   \kappa_d \\0 \end{pmatrix}   =1.
\end{equation}

Thus, only the last term in (\ref{98,5}) remains to be treated in the following.

From the definition of $a_i$ (\ref{a_i}), we recall that
 \begin{eqnarray}\label{}
 \begin{pmatrix} a_{i,1,j}\\a_{i,2,j}\end{pmatrix}= \begin{pmatrix}<e_j,A_i (e_1)>\kappa_d\\0 \end{pmatrix}+\begin{pmatrix}<e_j,A_i (q_1)>\\<e_j,A_i (q_2)>  \end{pmatrix}.
\end{eqnarray}
By $i)$ of Lemma \ref{prod}:
 \begin{eqnarray}\label{q1}
\sum_{i=2}^{m}\theta_i'(s) \begin{pmatrix} a_{i,1,j}\\a_{i,2,j}\end{pmatrix}=\theta_j'(s)\begin{pmatrix}   (\prod\limits_{l =j+1}^{m} \cos\theta_l )  \kappa_d  \\0 \end{pmatrix}+\sum_{i=2}^{m}\theta_i'(s) \begin{pmatrix}<e_j,A_i (q_1)>\\<e_j,A_i (q_2)>  \end{pmatrix},
\end{eqnarray}
where by convention $\prod\limits_{l =m+1}^{m} \cos\theta_l =1$ if $j=m$.

Applying the projection $\tilde \pi_0^d$ to (\ref{q1}) and using (\ref{B4}), we see that
 \begin{eqnarray}\label{100'}
 \Big{|} \sum_{i=2}^{m}\theta_i'(s) \tilde \pi_0^d \begin{pmatrix} a_{i,1,j}\\a_{i,2,j}\end{pmatrix}-\theta_j'(s) \prod\limits_{l =j+1}^{m} \cos\theta_l    \Big{|} &\le& \sum_{i=2}^{m}|\theta_i'(s)
 | \Big{|} \tilde \pi_0^d \begin{pmatrix} <e_j,A_i (q_1)>\\<e_j,A_i (q_2)> \end{pmatrix}  \Big{|}\notag\\&\le& C_0  || q||_{\mathcal{ H}}\sum_{i=2}^{m}|\theta_i'(s)|,
\end{eqnarray}
where, we use the fact that

\begin{eqnarray}\label{B44}
|\tilde \pi_0^d\begin{pmatrix}  <e_j,A_i (q_1)>\\<e_j,A_i (q_2)> \end{pmatrix}|\le C_0 || q||_{\mathcal{ H}},
\end{eqnarray}
which follows by the same techniques as in (\ref{juil},) (\ref{juil1}) and (\ref{juil2}).   

Using  (\ref{98,5}),(\ref{B1}), (\ref{B2}), (\ref{B3}) and (\ref{100'}), and recalling from (\ref{179}) that
$$\prod\limits_{l =j+1}^{m} \cos\theta_l \ge (\frac{1}{2})^{m-j},$$

we get for any $j\in  \{2,...,m\}$:

 \begin{eqnarray}\label{ll}
 \Big{|} \theta_j'(s) \Big{|} \le  \frac{C_0}{1-d^2}|d'| || q||_{\mathcal{ H}}+C_0 || q||_{\mathcal{ H}}^2+C_0|| q||_{\mathcal{ H}}\sum_{i=2}^{m}| \theta_i'|  .\end{eqnarray}

Using (\ref{ll}) together with (\ref{A1}), we see that
 \begin{eqnarray*}\label{}
\sum_{j=2}^{m} \Big{|} \theta_j'(s) \Big{|}+  \frac{|d'|}{1-d^2} \le C_0 \frac{|d'|}{1-d^2} || q||_{\mathcal{ H}}+C_0 || q||_{\mathcal{ H}}^2+C_0| || q||_{\mathcal{ H}}\sum_{i=2}^{m}| \theta_i'| ,
\end{eqnarray*}
Thus, using again (\ref{179}) and taking $\epsilon$ small enough, we get
\begin{eqnarray*}\label{}
\sum_{j=2}^{m} \Big{|} \theta_j'(s) \Big{|}+ \frac{|d'|}{1-d^2} \le C_0 || q||_{\mathcal{ H}}^2,
\end{eqnarray*}
which yields (\ref{184}). Then, using (\ref{A2}) together with (\ref{184}) gives (\ref{185}).

 \medskip

 For estimations (\ref{187})  (\ref{188})  (\ref{188'}) (\ref{189}), the study in the complex case (Subsection $4.3$ page $5914$ in \cite{<3}) can be adapted without any difficulty to the vector-valued case. For the reader convenience, we detail for example the energy barrier (\ref{189}):

 Using the definition of $q(y,s)$ (\ref{168}), we can make an expansion of $E(w(s),\partial_s w(s))$ (\ref{15}) for $q\rightarrow 0$ in $\mathcal{H}$ and get after from straightforward computations:
 \begin{equation}
  E(w(s),\partial_s w(s))= E(\kappa_0,0)+\frac{1}{2}\left[\bar \varphi_d(\begin{pmatrix}  q_{1,1}\\ q_{2,1} \end{pmatrix}, \begin{pmatrix}  q_{1,1}\\ q_{2,1} \end{pmatrix})+\sum_{i=2}^{m}\tilde \varphi_d(\begin{pmatrix}  q_{1,j}\\ q_{2,j} \end{pmatrix},\begin{pmatrix}  q_{1,j}\\ q_{2,j} \end{pmatrix})\right]-\int_{-1}^{1}\mathcal{F}_d (q_1) \rho dy \label{228}
 \end{equation}
 where $ \bar \varphi_d$, $ \tilde \varphi_d$ and $\mathcal{F}_d (q_1)$ are defined in (\ref{134'}), (\ref{134}) and (\ref{*F*}).\\
 
Using the argument in the real case (see page 113 in \cite{MR2362418}) we see that for some $C_0,C_1 >0$ we have:
 \begin{equation}\label{230}
  \bar \varphi_d(\begin{pmatrix}  q_{1,1}\\ q_{2,1} \end{pmatrix}, \begin{pmatrix}  q_{1,1}\\ q_{2,1} \end{pmatrix}) \le C_0 \alpha_{1,1}^2- C_1\alpha_{-,1}^2.
 \end{equation}
From (\ref{180*}), (\ref{181'}) and (\ref{50.5}), we see by definition that
 \begin{equation}\label{105'}
0\le   \tilde\varphi_d(\begin{pmatrix}  q_{1,j}\\ q_{2,j} \end{pmatrix}, \begin{pmatrix}  q_{1,j}\\ q_{2,j} \end{pmatrix})=\alpha_{-,j}^2.
 \end{equation}

 Since we have from (\ref{209}), (\ref{187}), (\ref{179}), (\ref{105'}) and (\ref{183}):
 \begin{equation}
  \left | \int_{-1}^{1}\mathcal{F}_d (q_1) \rho dy \right | \le C ||q(s)||_{\mathcal{H}}^{\bar p+1}\le C \epsilon^{\bar p-1} ( \alpha_{1,1}^2+ \alpha_{-,1}^2+\sum_{i=2}^{m} \alpha_{-,j}^2),\label{229}
 \end{equation}

Using (\ref{17}), (\ref{228}), (\ref{230}) and (\ref{229}), we see that taking $\epsilon$ small enough so that $C \epsilon^{\bar p-1}\le \frac{C_1}{4}$, we get
 \begin{equation*}
  0\le E(w(s),\partial_s w(s))-E(\kappa_0,0)\le \left( \frac{C_0}{2}+\frac{C_1}{4}\right)  \alpha_{-,1}^2-\frac{C_1}{4}\alpha_{1,1}^2+\left(\frac{1}{2}+\frac{C_1}{4}\right)\sum_{i=2}^{m} \alpha_{-,j}^2.
 \end{equation*}
 which yields (\ref{189}).
 \end{proof}
 
 \subsection{Exponential decay of the different components}\label{4.4}
Our aim is to show that $||q(s)||_{\mathcal{H}}\rightarrow 0$ and that both $\theta$ and $d$ converge as $s\rightarrow \infty$. An important issue will be to show that the unstable mode $\alpha_{1,1}$, which satisfies equation (\ref{183}) never dominates. This is true thanks to item $(iv)$ in Proposition \ref{5.2}.\\

If we introduce
\begin{equation}\label{234}
 \lambda(s)=\frac{1}{2} \log\left(\frac{1+d(s)}{1-d(s)}\right), a(s)= \alpha_{1,1}(s)^2\, \mbox{and}\, b(s)=\alpha_{-,1}(s)^2+\sum_{j=2}^{m}\alpha_{-,j}(s)^2+R_-(s) 
\end{equation}
(note that $d(s)=\tanh(\lambda(s))$), then we see from (\ref{187}), (\ref{183}) and (\ref{179}) that for all $s\in [s^*, \bar s)$           

 $|R_-(s)|=|b(s)-(\alpha_{-,1}(s)^2+\sum_{j=2}^{m}\alpha_{-,j}(s)^2)|\le C_0 \epsilon^{\bar p-1}(\alpha_{1,1}(s)^2+\alpha_{-,1}(s)^2+\sum_{j=2}^{m}\alpha_{-,j}(s)^2)$, hence
\begin{equation}\label{235}
 \frac{99}{100}\alpha_{-,1}(s)^2+ \frac{99}{100}\sum_{j=2}^{m}\alpha_{-,j}(s)^2-\frac{1}{100} a\le b\le  \frac{101}{100}\alpha_{-,1}(s)^2+ \frac{101}{100}\sum_{j=2}^{m}\alpha_{-,j}(s)^2+\frac{1}{100} a
\end{equation}
for $\epsilon$ small enough. Therefore, using Proposition \ref{5.2}, estimate (\ref{179}), (\ref{183}) and the fact that $\lambda'(s)=\frac{d'(s)}{1-d(s)^2}$, we derive the following:
\begin{cl}\label{5.5}{\bf (Relations between $a$, $b$, $\lambda$, $\theta$, $\int_{-1}^1 q_{1,1} q_{2,1} \rho$ and $\int_{-1}^1 q_{1,j}q_{2,j} \rho$)} There exist positive $\epsilon_4$, $K_4$ and $K_5$ such that if $\epsilon^*\le \epsilon_4$, then we have for all $s\in[s^*,\bar s]$ and $j=2,...,m$:
\item{(i)} {\bf (Size of the solution)}
\begin{align}
\frac{1}{K_4}(a(s)+b(s))\le || q(s)||_\mathcal{H}^2&\le K_4 (a(s)+b(s))\le K_4^2 \epsilon^2,\label{236}
\\
 |\theta'(s)|+|\lambda'(s)|&\le K_4 (a(s)+b(s))\le K_4^2 ||q(s)||_\mathcal{H}^2 ,\label{237}
\\
 \left | \int_{-1}^{1}q_{1,1}q_{1,1}\rho \right |&\le K_4 (a(s)+b(s)),\label{238}
\\
\left | \int_{-1}^{1}q_{1,j}q_{1,j}\rho \right |&\le K_4 b(s),\label{238'}
 \end{align}
and (\ref{235}) holds.
\item{(ii)} {\bf (Equations)}
\begin{align}\label{239}
 \frac{3}{2} a-K_4 \epsilon b&\le a' \le \frac{5}{2} a-K_4 \epsilon b,
\\\label{240}
b'&\le -\frac{8}{p-1}\int_{-1}^{1}(q_{-,2,1}^2+q_{-,2,j}^2)\frac{\rho}{1-y^2}dy+ K_4\epsilon (a+b),
\\\notag
 \frac{d}{ds}\int_{-1}^1 (q_{1,1}  q_{2,1}+ q_{1,j} q_{2,j}) \rho &\le -\frac{3}{5} b+K_4 \int_{-1}^1 (q_{-,2,1}^2+ q_{2,j}^2)\frac{\rho}{1-y^2}+K_4 a.
  \end{align}
\item{(iii)} {\bf (Energy barrier)} we have
   \begin{equation*}
    a(s)\le K_5 b(s).
   \end{equation*}
\end{cl}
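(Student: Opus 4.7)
\medskip
\noindent\textbf{Proof plan for Claim \ref{5.5}.}

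The strategy is to treat the claim as a bookkeeping consequence of Proposition \ref{5.2} together with the spectral decompositions \eqref{180}--\eqref{181'} and the coercivity estimates \eqref{183}; no new PDE input is required. I would first establish (\ref{235}) and part (iii), then chain them to get part (i), and finally convert the dynamical estimates of Proposition \ref{5.2} into (ii). Throughout I may shrink $\epsilon^*$ (equivalently $\epsilon$, by the relation $\epsilon=2K_0K_1\epsilon^*$) finitely often, and enlarge the generic constant $K_4$ finitely often.

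\emph{Step 1: (\ref{235}) and (iii).} Starting from the definition $b=\alpha_{-,1}^2+\sum_{j=2}^m\alpha_{-,j}^2+R_-$ in \eqref{234}, I bound the remainder using \eqref{187} by $|R_-|\le C_0\|q\|_{\mathcal H}^{1+\bar p}\le C_0\epsilon^{\bar p-1}\|q\|_{\mathcal H}^2$. By \eqref{183}, $\|q\|_{\mathcal H}^2$ is equivalent to $a+\alpha_{-,1}^2+\sum_j\alpha_{-,j}^2$, so $|R_-|\le C\epsilon^{\bar p-1}(a+\alpha_{-,1}^2+\sum_j\alpha_{-,j}^2)$. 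Choosing $\epsilon$ small enough that $C\epsilon^{\bar p-1}\le 1/100$ gives \eqref{235}. For (iii), I square \eqref{189} and use $(\sum_{j\ge 2}\alpha_{-,j})^2\le(m-1)\sum_{j\ge 2}\alpha_{-,j}^2$ to get $a\le C(\alpha_{-,1}^2+\sum_j\alpha_{-,j}^2)$; the upper inequality of \eqref{235} then yields $a\le C(\tfrac{101}{100}b+\tfrac{a}{100})$, which rearranges to $a\le K_5 b$ for a suitable absolute constant $K_5$.

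\emph{Step 2: Part (i).} For \eqref{236}, \eqref{183} combined with \eqref{235} and the barrier $a\le K_5 b$ shows $a+\alpha_{-,1}^2+\sum_j\alpha_{-,j}^2$ is equivalent to $a+b$, which gives the two-sided bound $\tfrac{1}{K_4}(a+b)\le\|q\|_{\mathcal H}^2\le K_4(a+b)$; the final bound $K_4^2\epsilon^2$ is just \eqref{179} squared. Inequality \eqref{237} is immediate from \eqref{184} written in the variable $\lambda(s)=\tfrac12\log\tfrac{1+d}{1-d}$ (so $\lambda'=d'/(1-d^2)$), combined with \eqref{236}. For \eqref{238}, Cauchy--Schwarz against $\rho\,dy$ gives $|\int q_{1,1}q_{2,1}\rho|\le\|q_{1,1}\|_{L^2_\rho}\|q_{2,1}\|_{L^2_\rho}\le\|q\|_{\mathcal H}^2\le K_4(a+b)$. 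For \eqref{238'}, the same inequality yields $|\int q_{1,j}q_{2,j}\rho|\le C\|(q_{1,j},q_{2,j})\|_{\mathcal H}^2\le C_0^2\,\alpha_{-,j}^2$ by \eqref{183}; the upper half of \eqref{235} together with (iii) gives $\alpha_{-,j}^2\le\tfrac{101}{100}b+\tfrac{a}{100}\le(\tfrac{101}{100}+\tfrac{K_5}{100})b$, as desired.

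\emph{Step 3: Part (ii).} The inequality \eqref{239} follows from differentiating $a=\alpha_{1,1}^2$: using \eqref{185}, $a'=2\alpha_{1,1}\alpha_{1,1}'=2a+2\alpha_{1,1}\,O(\|q\|_{\mathcal H}^2)$, so $|a'-2a|\le 2\sqrt{a}\,C_0\|q\|_{\mathcal H}^2\le 2C_0\|q\|_{\mathcal H}^3\le C\epsilon(a+b)$ by \eqref{236}. For $\epsilon$ small this gives the two-sided bound (taking $K_4$ large and absorbing $C\epsilon a$ into the coefficients $2\pm\tfrac12$). For \eqref{240}, summing \eqref{186} over $j=2,\dots,m$ and the analogous $\alpha_{-,1}^2$ identity produces
\[
\bigl(R_-+\tfrac12\alpha_{-,1}^2+\tfrac12\sum_{j=2}^m\alpha_{-,j}^2\bigr)'\le -\tfrac{4}{p-1}\int_{-1}^1\bigl(q_{-,2,1}^2+\sum_{j=2}^m q_{-,2,j}^2\bigr)\frac{\rho}{1-y^2}dy+C_0\|q\|_{\mathcal H}^3;
\]
adding $\tfrac12 a'$ converts the left-hand side to $\tfrac12 b'$ up to the $a$-contribution, which by \eqref{239} is $O(a+\epsilon b)$. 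Since $\|q\|_{\mathcal H}^3\le\epsilon\|q\|_{\mathcal H}^2\le K_4\epsilon(a+b)$, \eqref{240} follows. The last inequality is obtained by summing \eqref{188} and \eqref{188'} over $j=2,\dots,m$ (reading $q_{1,j}q_{2,j}$ in \eqref{188'}): the right-hand side becomes $-\tfrac45(\alpha_{-,1}^2+\sum_j\alpha_{-,j}^2)+C_0\int(q_{-,2,1}^2+\sum_j q_{-,2,j}^2)\frac{\rho}{1-y^2}+C_0 m\|q\|_{\mathcal H}^2$; invoking \eqref{235} gives $\alpha_{-,1}^2+\sum_j\alpha_{-,j}^2\ge\tfrac{100}{99}b-\tfrac{a}{99}$, so $-\tfrac45$ of this is at most $-\tfrac{80}{99}b+\tfrac{4}{495}a$, and the residual $C_0m\|q\|_{\mathcal H}^2\le C(a+b)$ is absorbed into $-\tfrac{80}{99}b$ and $K_4 a$ by choosing $K_4$ large and $\epsilon$ small, yielding the coefficient $-\tfrac35$ on $b$.

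The only mildly delicate point is the arithmetic in Step 3 for the last inequality, where the coefficient $\tfrac45$ coming from Proposition \ref{5.2}(iii) has to survive the losses from \eqref{235} and from absorbing the $\|q\|_{\mathcal H}^2$ error into $b$; since $\tfrac{80}{99}>\tfrac35$ strictly, there is enough slack once $\epsilon$ is small, so no obstacle arises. Everything else is purely algebraic manipulation of the quantities already controlled in Proposition \ref{5.2}.
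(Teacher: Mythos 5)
Your plan is correct and takes the same approach the paper intends: the paper offers no separate proof of Claim~\ref{5.5}, merely asserting that it follows from Proposition~\ref{5.2}, (\ref{179}), (\ref{183}) and the identity $\lambda'=d'/(1-d^2)$, and those are exactly the ingredients you chain together in the natural order (remainder control, energy barrier, norm equivalence, then the differential inequalities).

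Two small arithmetic remarks. In Step~1 for part~(iii), the bound on $\alpha_{-,1}^2+\sum_j\alpha_{-,j}^2$ in terms of $b$ comes from the \emph{lower} side of (\ref{235}), which gives $\alpha_{-,1}^2+\sum_j\alpha_{-,j}^2\le\tfrac{100}{99}b+\tfrac{a}{99}$ (not the constants $\tfrac{101}{100}$, $\tfrac{1}{100}$ you quoted, and not the upper side); the absorption $a\le C\bigl(\tfrac{100}{99}b+\tfrac{a}{99}\bigr)\Rightarrow a\le K_5 b$ then requires $\tfrac{C}{99}<1$, and since $C$ depends on $C_0$, $C_1$, $m$ through squaring (\ref{189}), you may need to replace the $1/100$ in (\ref{235}) by a smaller $\delta$, which is available by shrinking $\epsilon$ further, a point worth saying explicitly. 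In Step~3 your computation gives the upper bound $a'\le\tfrac{5}{2}a+K_4\epsilon b$; the sign $-K_4\epsilon b$ printed in (\ref{239}) is a misprint in the paper (test $a=0$, $b>0$: then $a'=2\alpha_{1,1}\alpha_{1,1}'=0$, while the stated right-hand side is strictly negative), so your version is the correct one and is what is actually needed in the sequel.
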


\begin{proof}[End of the Proof of Theorem \ref{theo3}]
 Now, we are ready to finish the proof of Theorem \ref{theo3} just started at the beginning of Section \ref{sec,mod}. Let us define $s_2^* \in [s^*,\bar s]$ as the first $s \in [s^*,\bar s]$ such that
      \begin{equation}\label{247}
       a(s)\ge \frac{b(s)}{5 K_4}
      \end{equation}
where $K_4$ is introduced in Corollary \ref{5.5}, or $s^*_2=\bar s$ if (\ref{247}) is never satisfied on $[s^*,\bar s]$. We claim the following:
\begin{cl}\label{5.6}
 There exist positive $\epsilon_6$, $\mu_6$, $K_6$ and $f\in C^1([s^*, s^*_2]$ such that if $\epsilon\le \epsilon_6$, then for all $s\in [s^*,s_2^*]$: \\
(i)
\begin{equation*}
 \frac{1}{2}f(s)\le b(s)\le 2 f(s)\mbox{ and }f'(s)\le -2\mu_6f(s),
\end{equation*}
(ii)
\begin{equation*}
|| q(s)||_\mathcal{H}\le K_6 || q(s^*)||_\mathcal{H}e^{-\mu_6(s-s^*)}\le K_6 K_1 \epsilon^*e^{-\mu_6(s-s^*)}.
\end{equation*}
\end{cl}
\begin{proof} The proof of Claim 5.6 page 115 in \cite{MR2362418} remains valid where $f(s)$ is given by 
 $$f(s)=b(s)+\eta_6\int_{-1}^1 (q_{1,1} q_{2,1}+ \sum_{j=2}^{m}q_{1,j} q_{2,j}) \rho,$$
where $\eta_6 >0$ is fixed small independent of $\epsilon$. 
\end{proof}
\begin{cl}\label{5.7}
 (i) There exists $\epsilon_7>0$ such that for all $\sigma>0$, there exists $K_7(\sigma)>0$ such that if $\epsilon \le \epsilon_7$, then
\begin{equation*}
 \forall s \in [s_2^*, \min (s_2^*+\sigma, \bar s)],\,|| q(s)||_\mathcal{H}\le K_7 || q(s^*)||_\mathcal{H}e^{-\mu_6(s-s^*)}\le K_7 K_1 \epsilon^*e^{-\mu_6(s-s^*)}
\end{equation*}
and
\begin{equation*}
 |\theta_i(s)|\le C\frac{(K_7 K_1 \epsilon^*)^2}{2\mu_6}
\end{equation*}

where $\mu_6$ has been introduced in Claim \ref{5.6}.\\
(ii) There exists $\epsilon_8>0$ such that if $\epsilon\le \epsilon_8$, then
\begin{equation}\label{254}
  \forall s \in (s_2^*,  \bar s],\; b(s)\le a(s) \left( 5 K_4 e^{-\frac{(s-s_2^*)}{2}}+\frac{1}{4 K_5}\right)
\end{equation}
where $K_4$ and $K_5$ have been introduced in Corollary \ref{5.5}.
\end{cl}
\begin{proof}
 The proof is the same as the proof of Claim 5.7 page 117 in \cite{MR2362418}.
\end{proof}
Now, in order to conclude the proof of Theorem \ref{theo3}, we fix $\sigma_0>0$ such that 
\begin{equation*}
 5K_4^{-\frac{\sigma_0}{2}}+\frac{1}{4K_5}\le \frac{1}{2K_5},
\end{equation*}
where $K_4$ and $K_5$ are introduced in Claim \ref{5.5}. Then, we fix the value of
\begin{equation}\label{259}
K_0=\max(2,K_6,K_7(\sigma_0)),
\end{equation}
and the constants are defined in Claims \ref{5.6} and \ref{5.7}. Then, we fix
\begin{equation*}
 \epsilon_0=\min \left(1,\epsilon_1,\frac{\epsilon_i}{2K_0K_1}\mbox{ for }i\in\{4,6,7,8\}\right)
\end{equation*}
and the constants are defined in Claims \ref{5.5}, \ref{5.6} and \ref{5.7}.
Now, if $\epsilon^*\le \epsilon_0$, then Claim \ref{5.5}, Claim \ref{5.6} and Claim \ref{5.7} apply. We claim that for all $s\in[s^*,\bar s]$,
 \begin{eqnarray}\label{261}
 || q(s)||_\mathcal{H}\le K_0 || q(s^*)||_\mathcal{H}e^{-\mu_6(s-s^*)}\le K_0 K_1 \epsilon^*e^{-\mu_6(s-s^*)}=\frac{\epsilon}{2}e^{-\mu_6(s-s^*)}.
 \end{eqnarray}
Indeed, if $s\in[s^*,\min(s_2^*+\sigma_0,\bar s)]$, then, this comes from $(ii)$ of Claim \ref{5.6} or $(i)$ of Claim \ref{5.7} and the definition of $K_0$ (\ref{259}).\\
Now, if $s_2^*+\sigma_0<\bar s$ and $s\in[s_2^*+\sigma_0,\bar s]$, then we have from (\ref{254}) and the definition of $\sigma_0$, $b(s)\le\frac{a(s)}{2K_5 }$ on the one hand. On the other hand, from $(iii)$ in Claim \ref{5.5}, we have $a(s)\le K_5 b(s)$, hence, $a(s)=b(s)=0$ and from (\ref{236}), $q(y,s)\equiv 0$, hence (\ref{261}) is satisfied trivially.\\
In particular, we have for all $s\in [s ^*,\bar s],\, ||q||_\mathcal{H}\le \frac{\epsilon}{2}$ and $ \cos \theta_i\ge 1-C \frac{\epsilon^2}{\mu_6^2}\ge\frac{3}{4}$, hence, by definition of $\bar s$ given right before (\ref{ro}), this means that $\bar s=\infty$.\\
From $(i)$ of Claim \ref{5.7} and (\ref{237}), we have
\begin{equation}\label{262}
 \forall s \ge s^*,||q(s)||_\mathcal{H}\le \frac{\epsilon}{2} e^{-\mu_6(s-s^*)}\mbox{ and }|\theta'(s)|+|\lambda'(s)|\le K_4^2 \frac{\epsilon^2}{4} e^{-2\mu_6(s-s^*)},
\end{equation}
where, $\theta(s)=(\theta_2 (s),...,\theta_m (s))$.

Hence, there is $\theta_\infty\in\mathbb{R}^{m-1}$ , $\lambda_\infty$ in $\mathbb{R}$ such that $\theta(s) \rightarrow \theta_\infty$, $\lambda(s) \rightarrow \lambda_\infty$ as $s\rightarrow \infty$ and
\begin{equation}\label{263}
  \forall s \ge s^*,| \lambda_\infty-\lambda(s) |\le C_1\epsilon^{*2} e^{-2\mu_6(s-s^*)}=C_2\epsilon^{2} e^{-2\mu_6(s-s^*)}
\end{equation}
\begin{equation}\label{263'}
  \forall s \ge s^*,| \theta_\infty-\theta(s) |\le C_1\epsilon^{*2} e^{-2\mu_6(s-s^*)}=C_2\epsilon^{2} e^{-2\mu_6(s-s^*)}
\end{equation}
for some positive $C_1$ and $C_2$. Taking $s=s^*$ here, we see that 

\begin{equation*}
| \lambda_\infty-\lambda^* |+|\theta_\infty |\le C_0\epsilon^* ,
\end{equation*}
where $\Omega=R_{\theta_\infty }(e_1)$. If $d_\infty=\tanh \lambda_\infty,$ then we see that $|d_\infty-d^*|\le C_3 (1-d^{*2})\epsilon^*.$\\
Using the definition of $q$ (\ref{ro}), (\ref{262}), (\ref{263}) and (\ref{263'}) 
we write
\begin{eqnarray}\label{23,4}
&\,&\notag \Bigg|\Bigg|\begin{pmatrix}w(s)\\\partial_s w(s)\end{pmatrix}-\begin{pmatrix}\kappa(d _\infty,\cdot)\Omega_\infty\\0\end{pmatrix}\Bigg|\Bigg|_\mathcal{H}\\
&\le&\notag \Bigg|\Bigg|\begin{pmatrix}w(s)\\\partial_s w(s)\end{pmatrix}-\begin{pmatrix}\kappa(d (s),\cdot)\Omega_\infty\\0\end{pmatrix}\Bigg|\Bigg|_\mathcal{H}+
||(\kappa(d (s),\cdot)-\kappa(d _\infty,\cdot)) \Omega_\infty||_{\mathcal{H}_0}\\&+&\notag||\kappa(d_\infty,\cdot)||_{\mathcal{H}_0} |R_{\theta(s)}(e_1)-R_{\theta_{\infty }}(e_1)|\\
&\le& ||q(s)||_\mathcal{H}+C|\lambda_\infty-\lambda(s)|+C|\theta_\infty-\theta(s)|\le C_4 \epsilon^*e^{-\mu_6(s-s^*)},
\end{eqnarray}
where, we used the fact that $\theta\in \mathbb{R}^{m-1}\mapsto \mathcal{O}^m$ is a Lipschitz function (see (\ref{jac}) to be convinced) and $\lambda\in \mathbb{R}\mapsto \kappa(d,\cdot)\in \mathcal{H}_0$ is also Lipschitz, where $d=\tanh \lambda$ (see \ref{avion}).This concludes the proof of Theorem \ref{theo3} in the case where $\Omega^*=e_1$ (see (\ref{53,5})).
From rotataion invariance of equation (\ref{equa}), this yields the conclusion of Theorem \ref{theo3} in the general case.
\end{proof}
\medskip
\appendix

\section{A some technical estimates}\label{AA}

In this section, we give the proof of estimate (\ref{61,5}) and Lemma \ref{prod}.\\

\bigskip

\noindent{\bf \it Proof of estimate (\ref{61,5})}:\\
Using (\ref{R_theta}), we see that
\begin{equation}\label{60,5}
\frac{\partial R_{\theta}}{\partial \theta_j}= R_2 ... R_{j-1}\frac{\partial R_j}{\partial \theta_j}R_{j+1}...R_m.
\end{equation}
From (\ref{matrice}), we see that
\begin{equation}\label{60,5}
\frac{\partial R_{\theta}}{\partial \theta_j}= \Pi_j\circ R_{j}(\theta_j+\frac{\pi}{2})= R_{j}(\theta_j+\frac{\pi}{2})\circ  \Pi_j,
\end{equation}
where $\Pi_j$ is the orthogonal projection on the plane spanned by $e_1$ and $e_j$, and the rotation $ R_{j}(\alpha)$ is given by considering the matrix of $R_j$ defined in (\ref{matrice}), and changing $\theta_j$ into $\alpha$.

Since $$\partial R_{j}^{-1 }\partial R_{j}(\theta_j+\frac{\pi}{2})=\partial R_{j}(\frac{\pi}{2}),$$
it follows from $(\ref{R_theta})$ and $(\ref{60,5})$ that
\begin{equation}\label{60,6}
R_{\theta}^{-1}\frac{\partial R_{\theta}}{\partial \theta_j}= R_m^{-1}...R_{j+1}^{-1}  R_{j}(\frac{\pi}{2})  \Pi_j  R_{j+1}...R_m.
\end{equation}
By the same argument, we drive that $\frac{\partial R_{\theta}^{-1}}{\partial \theta_j}\partial R_{\theta}$ has the same expression, thus, (\ref{61,5}) holds from (\ref{A_i}) and (\ref{60,6}).
\bigskip

Now, we give the proof of Lemma \ref{prod}.\\
\noindent{\bf \it Proof of Lemma \ref{prod}}:\\
i) We first give the expression of the $m\times m$ matrix $R_\theta$ defined (\ref{R_theta}). Indeed, using (\ref{matrice}) and (\ref{R_theta}), we have:

\begin{eqnarray}\label{n150}
R_\theta\equiv \begin{pmatrix}
 \varphi_{2,m}&-\sin \theta_2&\cdots &-\sin \theta_j  \varphi_{2,l-1}&\cdots&-\sin \theta_m \varphi_{2,m-1}\\ \\
 
\sin\theta_2 \varphi_{3,m}&\cos \theta_2& & & &.\\

 \vdots& &\ddots& &  \text{\huge {R}}_{\Huge \theta, k,l}    &\vdots\\
 
\vdots     & & &\cos \theta_j& &\vdots\\
 
 \sin \theta_{k}  \varphi_{k+1,m} & & \text{\Huge {0}} & & &\vdots\\
\vdots & & &   &\ddots&\vdots\\ \\
 \sin\theta_m\varphi_{m+1,m}&0 & \cdots& \cdots&0&\cos \theta_m\\

\end{pmatrix}
\end{eqnarray}
where for $k\ge 1$, $l\ge 2$:
\begin{equation} R_{\theta, k, l}=\left\{
\begin{array}{l}
 -\sin \theta_l \varphi_{2,l-1}\;  \mbox{ if } k=1\\
-\sin \theta_k \sin \theta_l \varphi_{k+1,l-1}\; \mbox{ if } 2\le k\le l-1\\
\cos \theta_k\; \mbox{ if } k=l\\
\; 0\; \; \mbox{ if } k\ge l+1
\end{array}
\right . \label{n151}
\end{equation}
with
\begin{equation} \label{n151,5}
\varphi_{ k, l}=\left\{
\begin{array}{l}
\prod\limits_{n=k}^{l}\cos \theta_n \mbox{ if } k \le l\\
1\mbox{ if } k\ge l+1.
\end{array}
\right . 
\end{equation}

\bigskip

In fact, we will prove the following identities, which imply item $i)$:\\
$(A)$ For all $i$,$j$ $\in \{2,...,m\}$, such that $  i\neq j$, we have
$$<e_j,A_i (e_1)>=0.$$
$(B)$ For all $i\in \{2,...,m\}$

 $$< e_1,A_i e_1>=0.$$
$(C)$ For all $i\in \{2,...,m\}$, we have
\begin{eqnarray*}
< e_i,A_i e_1>=
 \varphi_{ i+1, m},
\end{eqnarray*}

where $A_i$ and $\varphi_{i+1,m}$ are given in (\ref{A_i}) and (\ref{n151,5}) .\\

\noindent$\blacktriangleright${\bf \it Proof of $(A)$}. Let $i$,$j$ $\in \{2,...,m\}$, such that $ i\neq j$. The idea is to compute $< R_\theta e_j,\frac{\partial R_\theta }{\partial \theta_i} e_1>$ instead of $<e_j,A_i e_1 >$. In fact, using the conservation of the inner product after a rotation and the fact that $A_i=R_{\theta}^{-1} \frac{\partial R_\theta}{\partial \theta_i}$ (by (\ref{A_i})), we have:
\begin{eqnarray}\label{n27}
<e_j,A_i e_1 >=<R_\theta e_j,R_\theta A_i e_1>=< R_\theta e_j,\frac{\partial R_\theta }{\partial \theta_i} e_1>.
\end{eqnarray}
In the following, we distinguish two cases:\\
- Case $1$: $i\le j-1$,\\
- Case $2$: $i\ge j+1$.\\
We first handle Case $1$.\\
{\bf Case $1$:} $i\le j-1$.
Using (\ref{n150}) and its derivative with respect to $\theta_i$, we write:
\begin{equation} R_\theta e_j=(R_\theta e_j)_{k=1,...,m}=\left\{
\begin{array}{l}
 -\sin \theta_j \varphi_{2,j-1}, \mbox{ if } k=1\\
-\sin \theta_k \sin \theta_j \varphi_{k+1,j-1}\; \mbox{ if } 2\le k\le j-1\\
\cos \theta_j\; \mbox{ if } k=j\\
\; 0\; \; \mbox{ if } k\ge j+1
\end{array}
\right . \label{n151,6}
\end{equation}
and
\begin{equation} \frac{\partial R_\theta }{\partial \theta_i} e_1= (\frac{\partial R_\theta }{\partial \theta_i} e_1)_{ k=1,...., m}=\left\{
\begin{array}{l}
-\sin \theta_i  \frac{\varphi_{2,m}}{\cos \theta_i}    \mbox{ if } k =1\\
-\sin \theta_i  \sin \theta_k \frac{\varphi_{k+1,m}}{\cos \theta_i} \mbox{ if } 2\le k\le i-1\\
\cos  \theta_i \varphi_{i+1,m}, \mbox{ if } k=i\\
0\mbox{ if } k\ge i+1.
\end{array}
\right . \label{n151,7}
\end{equation}
Therefore,
\begin{eqnarray}\label{n152}
 &&< R_\theta e_j,\frac{\partial R_\theta }{\partial \theta_i} e_1>=\sum_{k=1}^{m}R_{\theta,k,j}{\frac{\partial R_\theta }{\partial \theta_i}}_{k,1}\notag
\\&=&\sin \theta_i  \frac{\varphi_{2,m}}{\cos \theta_i} \sin \theta_j \varphi_{2,j-1}+\sum_{k=2}^{i-1}(\sin \theta_k \sin \theta_i  \frac{\varphi_{k+1,m}}{\cos \theta_i} \times \sin \theta_k \sin \theta_j  \varphi_{k+1,j-1})\notag\\
&-& \cos \theta_i \varphi_{i+1,m} \sin \theta_i \sin \theta_j \varphi_{i+1,j-1}\notag\\
&=& \sin \theta_i \sin \theta_j \left(   \frac{\varphi_{2,m}}{\cos \theta_i} \varphi_{2,j-1}+ \sum_{k=2}^{i-1}(\sin \theta_k^2  \frac{\varphi_{k+1,m}}{\cos \theta_i} \times \varphi_{k+1,j-1}) - \cos \theta_i  \varphi_{i+1,m} \varphi_{i+1,j-1}    \right).
\end{eqnarray}
In order to transform the sum term in the previous identity, we make in the following a finite induction where the parameter $q$ decreases from $i-1$ to $1$:
\begin{lem}\label{lemrec} We have:
\begin{align}  \label{nhr} 
\forall q \in \{1,...,i-1\},\;\sum_{k=2}^{i-1}\sin \theta_k^2  \frac{\varphi_{k+1,m}}{\cos \theta_i} &\times \varphi_{k+1,j-1}-\cos \theta_i \varphi_{i+1,m} \varphi_{i+1,j-1} =\\
\sum_{k=2}^{q}&\sin \theta_k^2  \frac{\varphi_{k+1,m}}{\cos \theta_i} \times \varphi_{k+1,j-1}-\frac{ \varphi_{q+1,m} \varphi_{q+1,j-1} }{\cos \theta_i}. \notag
\end{align}
\end{lem}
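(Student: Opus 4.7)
The plan is to run a straightforward downward induction on $q$ from $q = i-1$ to $q = 1$, whose entire mechanism rests on the two identities
\[
\varphi_{q,n} = \cos\theta_q\,\varphi_{q+1,n} \qquad (\text{valid whenever } n \ge q),
\]
which follows directly from the definition (\ref{n151,5}), together with $\sin^2\theta_q = 1 - \cos^2\theta_q$. Note that throughout, we are in Case $1$, i.e.\ $i \le j-1$, so for every $q \in \{1,\dots,i-1\}$ we have both $m \ge q$ and $j-1 \ge q$, and the above factorization is therefore available for both $\varphi_{q,m}$ and $\varphi_{q,j-1}$.

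For the base case $q = i-1$, the two sides of (\ref{nhr}) differ only in the last subtracted term. What must be checked is
\[
\cos\theta_i\,\varphi_{i+1,m}\,\varphi_{i+1,j-1} \;=\; \frac{\varphi_{i,m}\,\varphi_{i,j-1}}{\cos\theta_i},
\]
and applying the factorization identity twice (once with $n=m$, once with $n=j-1$) on the right-hand side turns the numerator into $\cos^2\theta_i\,\varphi_{i+1,m}\,\varphi_{i+1,j-1}$, giving equality.

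For the inductive step, assuming (\ref{nhr}) at level $q \in \{2,\dots,i-1\}$, subtracting the corresponding identity at level $q-1$ reduces the claim to
\[
\sin^2\theta_q\,\frac{\varphi_{q+1,m}\,\varphi_{q+1,j-1}}{\cos\theta_i} \;-\; \frac{\varphi_{q+1,m}\,\varphi_{q+1,j-1}}{\cos\theta_i} \;=\; -\,\frac{\varphi_{q,m}\,\varphi_{q,j-1}}{\cos\theta_i}.
\]
The left-hand side factors as $-\cos^2\theta_q\,\varphi_{q+1,m}\,\varphi_{q+1,j-1}/\cos\theta_i$ by the Pythagorean identity, while the right-hand side, after one application of $\varphi_{q,n}=\cos\theta_q\,\varphi_{q+1,n}$ to each of its two $\varphi$-factors, becomes exactly the same expression. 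This closes the induction.

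There is no real obstacle here: the statement is purely algebraic and the only thing to be careful about is the range condition $n\ge q$ needed to legitimize $\varphi_{q,n}=\cos\theta_q\,\varphi_{q+1,n}$, which is guaranteed by the Case $1$ hypothesis $i\le j-1$ and the fact that $q$ stays in $\{1,\dots,i-1\}$. Once the lemma is established, one simply invokes it with $q=1$ to rewrite the bracketed quantity in (\ref{n152}) in a form where the telescoping with the very first term $\varphi_{2,m}\varphi_{2,j-1}/\cos\theta_i$ will cancel out, yielding assertion $(A)$.
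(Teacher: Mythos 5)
Your downward induction with the factorization $\varphi_{q,n}=\cos\theta_q\,\varphi_{q+1,n}$ (for $n\ge q$) and the Pythagorean identity is exactly the mechanism the paper uses, with the base case coming from what the paper isolates as Claim \ref{lemrecc}. The only cosmetic difference is that you fold the boundary check $i=j-1$ into the blanket range condition $n\ge q$, whereas the paper treats it as a separate two-case claim; the argument is the same.
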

\noindent {\bf Remark:} If $q=1$, the sum in the right hand side is naturally zero.\\
\begin{proof} See below.
\end{proof}
Applying this Lemma, we conclude the proof of $(A)$ in Case $1$ (i. e. when $i\le j-1$). Indeed, from (\ref{n152}) and Lemma \ref{lemrec} with $q=1$ we write 
\begin{eqnarray*}\label{}
 < R_\theta e_j,\frac{\partial R_\theta }{\partial \theta_i} e_1>=\sin \theta_i \sin \theta_j \left(   \frac{\varphi_{2,m}}{\cos \theta_i}  \varphi_{2,j-1} - \frac{\varphi_{2,m}}{\cos \theta_i} \varphi_{2,j-1}\right)=0.
\end{eqnarray*}
It remains now to prove Lemma \ref{lemrec}.
\begin{proof}[Proof of Lemma \ref{lemrec}] First, we give the following:
\begin{cl} \label{lemrecc} We have
\begin{eqnarray*}
\varphi_{i,j-1}=\cos \theta_i \varphi_{i+1,j-1}.
\end{eqnarray*}
\end{cl}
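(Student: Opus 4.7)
The plan is to dispatch Claim \ref{lemrecc} by a direct unwinding of the definition (\ref{n151,5}) of $\varphi_{k,l}$, distinguishing the two sub-cases according to whether $i+1\le j-1$ or $i+1=j$. Recall that we are in Case $1$ of the proof of $(A)$, so we may freely use $i\le j-1$.

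First I will handle the generic sub-case $i\le j-2$, i.e.\ $i+1\le j-1$. In this regime both indices in the products are in the range where (\ref{n151,5}) gives an honest product of cosines, and I simply factor off the first factor:
\begin{equation*}
\varphi_{i,j-1}=\prod_{n=i}^{j-1}\cos\theta_n=\cos\theta_i\cdot \prod_{n=i+1}^{j-1}\cos\theta_n=\cos\theta_i\,\varphi_{i+1,j-1}.
\end{equation*}
This is the only non-trivial content of the claim.

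Then I will treat the boundary sub-case $i=j-1$, which is essentially a bookkeeping check of the convention $\varphi_{k,l}=1$ for $k\ge l+1$ hidden in (\ref{n151,5}). Here $\varphi_{i,j-1}=\varphi_{j-1,j-1}=\cos\theta_{j-1}$ by the first branch of (\ref{n151,5}), whereas $\varphi_{i+1,j-1}=\varphi_{j,j-1}=1$ by the second branch, so again $\varphi_{i,j-1}=\cos\theta_i\,\varphi_{i+1,j-1}$.

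There is no real obstacle in this claim; its only purpose is to prepare the telescoping identity used in the induction of Lemma \ref{lemrec}. The only point that requires a moment of care is consistency with the convention for empty products in (\ref{n151,5}), which is why I separate the boundary case $i=j-1$ rather than writing a single line.
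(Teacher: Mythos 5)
Your proof is correct and follows the same route as the paper: both split into the cases $i\le j-2$ (where the identity is just factoring off the first term of the product) and $i=j-1$ (where the convention $\varphi_{k,l}=1$ for $k\ge l+1$ is invoked). You merely spell out the generic case that the paper dismisses as trivial.
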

\begin{proof} Since $i\le j-1$, we have two cases:\\
- If $i\le j-2$: trivial.\\
- If $i=j-1$:
 $\varphi_{i,j-1}=\varphi_{i,i} =\cos \theta_i  $ and $\varphi_{i+1,j-1}  = \varphi_{i+1,i}=1 $, and the result follows.
\end{proof}
Now, we are ready to start the proof of Lemma \ref{lemrec}. Let us prove the result using an induction with a decreasing index. For $q=i-1$, (\ref{nhr}) is satisfied using Claim \ref{lemrecc}. Assume now that (\ref{nhr}) is true for $q=i-1,...,2$ and let us prove it for $q-1$. Using (\ref{nhr}) with $q$, we write 
\begin{eqnarray*}
 &&\sum_{k=2}^{i-1}\sin \theta_k^2  \frac{\varphi_{k+1,m}}{\cos \theta_i} \times \varphi_{k+1,j-1} - \cos \theta_i  \varphi_{i+1,m} \varphi_{i+1,j-1}  =\\
&& \sum_{k=2}^{q-1}\sin \theta_k^2  \frac{\varphi_{k+1,m}}{\cos \theta_i} \times \varphi_{k+1,j-1}+ \sin \theta_q^2  \frac{\varphi_{q+1,m}}{\cos \theta_i} \times \varphi_{q+1,j-1} -\frac{ \varphi_{q+1,m} \varphi_{q+1,j-1} }{\cos \theta_i} =\\
&&  \sum_{k=2}^{q-1}\sin \theta_k^2  \frac{\varphi_{k+1,m}}{\cos \theta_i} \times \varphi_{k+1,j-1}- \cos \theta_q^2  \frac{\varphi_{q+1,m}}{\cos \theta_i} \times \varphi_{q+1,j-1}=  \\
 &&\sum_{k=2}^{q-1}\sin \theta_k^2  \frac{\varphi_{k+1,m}}{\cos \theta_i} \times \varphi_{k+1,j-1} -\frac{ \varphi_{q,m} \varphi_{q,j-1} }{\cos \theta_i}.
\end{eqnarray*}
Thus, (\ref{nhr}) is satisfied for $q-1$. This concludes the proof of Lemma \ref{lemrec} and identity $(A)$ when $i\le j-1$.
\end{proof}
Now, we handle Case $2$.\\
- {\bf Case} $2$: $i\ge j+1$.\\ Using (\ref{n151,6}) and (\ref{n151,7}), we write:
\begin{eqnarray}\label{n1521}
 &&< R_\theta e_j,\frac{\partial R_\theta }{\partial \theta_i} e_1>=\sum_{k=1}^{m}R_{\theta,k,j}{\frac{\partial R_\theta }{\partial \theta_i}}_{k,1}\notag
\\
&=& \sin \theta_i \sin \theta_j \left(   \frac{\varphi_{2,m}}{\cos \theta_i} \varphi_{2,j-1}+ \sum_{k=2}^{j-1}(\sin^2 \theta_k  \frac{\varphi_{k+1,m}}{\cos \theta_i} \times \varphi_{k+1,j-1}) - \cos \theta_j \frac{ \varphi_{j+1,m}}{\cos \theta_i}     \right).
\end{eqnarray}
In order to transform the sum term in the previous identity, we make in the following a finite induction where the parameter $q$ decreases from $j-1$ to $1$:
\begin{lem}\label{lemrec1} We have:
\begin{align}  \label{nhr1} 
\forall q \in \{1,...,j-1\},\;\sum_{k=2}^{j-1}\sin^2 \theta_k  \frac{\varphi_{k+1,m}}{\cos \theta_i} &\times \varphi_{k+1,j-1}-\cos \theta_j  \frac{ \varphi_{j+1,m}}{\cos \theta_i}   =\\
\sum_{k=2}^{q}&\sin^2 \theta_k \frac{\varphi_{k+1,m}}{\cos \theta_i} \times \varphi_{k+1,j-1}-\varphi_{q+1,j-1} \frac{ \varphi_{q+1,m}}{\cos \theta_i}  . \notag
\end{align}
\end{lem}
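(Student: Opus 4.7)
The proof of Lemma \ref{lemrec1} will follow the same architecture as that of Lemma \ref{lemrec}: a backward (decreasing) induction on $q$, starting from $q=j-1$ and descending to $q=1$. The strategy is to peel off the top index of the sum one at a time and absorb the extracted term into the floating $\varphi$-factor on the right, using the telescoping identity $\cos\theta_q\,\varphi_{q+1,\ell}=\varphi_{q,\ell}$ that follows from definition \eqref{n151,5}.

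For the base case $q=j-1$, the right-hand side of \eqref{nhr1} reads
$$\sum_{k=2}^{j-1}\sin^2\theta_k\,\frac{\varphi_{k+1,m}}{\cos\theta_i}\,\varphi_{k+1,j-1}\;-\;\varphi_{j,j-1}\,\frac{\varphi_{j,m}}{\cos\theta_i}.$$
By convention \eqref{n151,5}, $\varphi_{j,j-1}=1$, and the product rule gives $\varphi_{j,m}=\cos\theta_j\,\varphi_{j+1,m}$, which immediately recovers the left-hand side of \eqref{nhr1}.

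For the inductive step, assuming \eqref{nhr1} holds at index $q$, I would extract the term $k=q$ from the partial sum and rewrite
$$\sum_{k=2}^{q}\sin^2\theta_k\,\frac{\varphi_{k+1,m}}{\cos\theta_i}\,\varphi_{k+1,j-1}-\varphi_{q+1,j-1}\,\frac{\varphi_{q+1,m}}{\cos\theta_i}=\sum_{k=2}^{q-1}\sin^2\theta_k\,\frac{\varphi_{k+1,m}}{\cos\theta_i}\,\varphi_{k+1,j-1}+(\sin^2\theta_q-1)\,\varphi_{q+1,j-1}\,\frac{\varphi_{q+1,m}}{\cos\theta_i}.$$
Using $\sin^2\theta_q-1=-\cos^2\theta_q$ together with the two telescoping identities $\cos\theta_q\,\varphi_{q+1,j-1}=\varphi_{q,j-1}$ and $\cos\theta_q\,\varphi_{q+1,m}=\varphi_{q,m}$, the trailing correction becomes $-\varphi_{q,j-1}\,\varphi_{q,m}/\cos\theta_i$, which is precisely the form prescribed by \eqref{nhr1} at index $q-1$.

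The only delicate point — and the one place where the argument could slip — is the validity of $\cos\theta_q\,\varphi_{q+1,j-1}=\varphi_{q,j-1}$ at the boundary $q=j-1$, since there $\varphi_{q+1,j-1}=\varphi_{j,j-1}$ falls into the second branch of \eqref{n151,5} and equals $1$ by convention rather than by the product formula; a small case-split ($q\le j-2$ versus $q=j-1$), exactly as in Claim \ref{lemrecc}, handles this cleanly. Once the induction descends to $q=1$, the sum in \eqref{nhr1} is empty and only the factor $-\varphi_{2,j-1}\,\varphi_{2,m}/\cos\theta_i$ remains; plugging this into \eqref{n1521} yields
$$\langle R_\theta e_j,\tfrac{\partial R_\theta}{\partial\theta_i}e_1\rangle=\sin\theta_i\,\sin\theta_j\left(\frac{\varphi_{2,m}}{\cos\theta_i}\,\varphi_{2,j-1}-\varphi_{2,j-1}\,\frac{\varphi_{2,m}}{\cos\theta_i}\right)=0,$$
which, via \eqref{n27}, closes identity $(A)$ in Case~2 and completes the proof of Lemma \ref{lemrec1}.
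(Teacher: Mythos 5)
Your proposal is correct and follows the same route as the paper: a decreasing induction starting at $q=j-1$, extracting the top term of the partial sum, using $\sin^2\theta_q-1=-\cos^2\theta_q$, and absorbing a factor of $\cos\theta_q$ into each of $\varphi_{q+1,j-1}$ and $\varphi_{q+1,m}$ to lower the index. The extra care you take with the boundary case $q=j-1$ (where $\varphi_{q+1,j-1}=\varphi_{j,j-1}=1$ comes from the convention rather than the product formula) is exactly the small verification the paper isolates as Claim~\ref{lemrecc} in the proof of Lemma~\ref{lemrec}, and it goes through just as smoothly here.
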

\noindent {\bf Remark:} If $q=1$, the sum in the right hand side is naturally zero.\\
\begin{proof} See below.
\end{proof}
Applying this Lemma, we conclude the proof of $(A)$ in Case $2$ (i. e. when $i\ge j+1$). Indeed, from (\ref{n1521}) and Lemma \ref{lemrec1} with $q=1$ we write 
\begin{eqnarray*}\label{}
 < R_\theta e_j,\frac{\partial R_\theta }{\partial \theta_i} e_1>=\sin \theta_i \sin \theta_j \left(   \frac{\varphi_{2,m}}{\cos \theta_i}  \varphi_{2,j-1} - \frac{\varphi_{2,m}}{\cos \theta_i} \varphi_{2,j-1}\right)=0.
\end{eqnarray*}
It remains now to prove Lemma \ref{lemrec1}.
\begin{proof}[Proof of Lemma \ref{lemrec1}] We prove the result using an induction with a decreasing index. For $q=j-1$, (\ref{nhr1}) is satisfied. Assume now that (\ref{nhr1}) is true for $q=j-1,...,2$ and let us prove it for $q-1$. Using (\ref{nhr1}) with $q$, we write 
\begin{eqnarray*}
&&\sum_{k=2}^{j-1}\sin^2 \theta_k  \frac{\varphi_{k+1,m}}{\cos \theta_i} \times \varphi_{k+1,j-1}-\cos \theta_j  \frac{ \varphi_{j+1,m}}{\cos \theta_i}   =\\
&&\sum_{k=2}^{q}\sin^2 \theta_k \frac{\varphi_{k+1,m}}{\cos \theta_i} \times \varphi_{k+1,j-1}-\varphi_{q+1,j-1} \frac{ \varphi_{q+1,m}}{\cos \theta_i}= \\
&&\sum_{k=2}^{q-1}\sin^2 \theta_k \frac{\varphi_{k+1,m}}{\cos \theta_i} \times \varphi_{k+1,j-1}+\sin^2 \theta_q \frac{\varphi_{q+1,m}}{\cos \theta_i} \times \varphi_{q+1,j-1}-\varphi_{q+1,j-1} \frac{ \varphi_{q+1,m}}{\cos \theta_i} =\\
&&\sum_{k=2}^{q-1}\sin^2 \theta_k \frac{\varphi_{k+1,m}}{\cos \theta_i} \times \varphi_{k+1,j-1}- \frac{ \varphi_{q,m}}{\cos \theta_i}\varphi_{q,j-1}.
\end{eqnarray*}
Thus, (\ref{nhr1}) is satisfied for $q-1$. This concludes the proof of Lemma \ref{lemrec1}.
\end{proof}

\noindent$\blacktriangleright${\bf \it Proof of $(B)$}: As for (\ref{n27}) we have:
$$<e_1,A_i e_1 >=<R_\theta e_1,  \frac{\partial R_\theta }{\partial \theta_i} e_1>.$$
From (\ref{n150}), we have:
$$R_\theta e_1=( \varphi_{2,m},\sin\theta_2\varphi_{3,m}\,\cdots, \sin \theta_{i}  \varphi_{i+1,m} ,\cdots,\sin\theta_m).$$
Therefore, using  (\ref{n151,7}), we have:
\begin{eqnarray*}\label{}
 < R_\theta e_1,\frac{\partial R_\theta }{\partial \theta_i} e_1>=-\cos \theta_i\sin\theta_i\left((\frac{\displaystyle\varphi_{2,m}}{\cos \theta_i})^2  +\sum_{k=2}^{i-1}\sin^2\theta_k (\frac{\displaystyle\varphi_{k+1,m}}{\cos \theta_i})^2 \right)+\cos \theta_i\sin\theta_i     (\varphi_{i+1,m})^2.
\end{eqnarray*}

In order to transform the sum term in the previous identity, we make in the following a finite induction:

\begin{lem}\label{lemrec2} We have:
\begin{eqnarray}\label{eqrec}
\forall q\in \{2,..., i-1\},\;\displaystyle(\frac{\displaystyle\varphi_{2,m}}{\cos \theta_i})^2  +\sum_{l=2}^{i-1}\sin^2\theta_l (\frac{\displaystyle\varphi_{l+1,m}}{\cos \theta_i})^2=(\frac{\displaystyle\varphi_{q,m}}{\cos \theta_i})^2  +\sum_{l=q}^{i-1}\sin^2\theta_l (\frac{\displaystyle\varphi_{l+1,m}}{\cos \theta_i})^2.
\end{eqnarray}
\end{lem}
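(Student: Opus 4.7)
The plan is to follow the same finite-induction strategy used in Lemmas \ref{lemrec} and \ref{lemrec1}, but proceeding by \emph{ascending} induction on $q \in \{2,\ldots,i-1\}$, because at $q=2$ the right-hand side of (\ref{eqrec}) literally coincides with the left-hand side, making the base case a tautology. Equivalently, I would prove by induction that the right-hand side of (\ref{eqrec}) does not depend on $q$.

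For the inductive step from $q$ to $q+1$, after cancelling the common sum $\sum_{l=q+1}^{i-1}\sin^2\theta_l(\varphi_{l+1,m}/\cos\theta_i)^2$ on both sides, the goal collapses to the single pointwise identity
\begin{equation*}
\left(\frac{\varphi_{q,m}}{\cos\theta_i}\right)^2 + \sin^2\theta_q \left(\frac{\varphi_{q+1,m}}{\cos\theta_i}\right)^2 = \left(\frac{\varphi_{q+1,m}}{\cos\theta_i}\right)^2.
\end{equation*}
The key observation is the telescoping relation $\varphi_{q,m} = \cos\theta_q\,\varphi_{q+1,m}$, which is immediate from the definition (\ref{n151,5}) for any $q \le m$. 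Substituting this gives $(\varphi_{q,m}/\cos\theta_i)^2 = \cos^2\theta_q(\varphi_{q+1,m}/\cos\theta_i)^2$, and the Pythagorean identity $\cos^2\theta_q + \sin^2\theta_q = 1$ closes the step.

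I do not foresee any genuine obstacle: the argument rests only on the recursive structure of $\varphi_{k,l}$ and on $\sin^2+\cos^2=1$. The only care needed is with the index bookkeeping, to verify that the term removed from the sum in passing from $q$ to $q+1$ is exactly $\sin^2\theta_q(\varphi_{q+1,m}/\cos\theta_i)^2$. Finally, to recover identity $(B)$ from the computation preceding the lemma, one applies (\ref{eqrec}) at $q = i-1$: using $\varphi_{i-1,m}/\cos\theta_i = \cos\theta_{i-1}\varphi_{i+1,m}$ and $\varphi_{i,m}/\cos\theta_i = \varphi_{i+1,m}$, the right-hand side collapses (again by $\cos^2+\sin^2=1$) to $\varphi_{i+1,m}^2$, which cancels the term $\cos\theta_i\sin\theta_i(\varphi_{i+1,m})^2$ in the expression for $\langle R_\theta e_1,\partial_{\theta_i}R_\theta e_1\rangle$, yielding $\langle e_1, A_i e_1\rangle = 0$.
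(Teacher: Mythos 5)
Your proof is correct and follows essentially the same route as the paper's: ascending finite induction on $q$, base case $q=2$ trivial, and the step resting on the telescoping relation $\varphi_{q,m}=\cos\theta_q\,\varphi_{q+1,m}$ combined with $\cos^2\theta_q+\sin^2\theta_q=1$. Your explicit reduction of the inductive step to the single pointwise identity is just a cleaner way of writing the chained equalities the paper displays, and your remark on concluding identity $(B)$ by applying the lemma at $q=i-1$ (rather than the paper's $q=i$, which relies on the remark following the lemma) keeps you strictly within the lemma's stated range.
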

\noindent {\bf Remark:} If $q=i$, the sum in the right hand side is naturally zero.\\
Using Lemma \ref{lemrec2} with $q=i$ we get
\begin{eqnarray*}
 < R_\theta e_1,\frac{\partial R_\theta }{\partial \theta_i} e_1>=-\cos \theta_i\sin\theta_i   (\varphi_{i+1,m})^2 +\cos \theta_i\sin\theta_i    (\varphi_{i+1,m})^2=0,
 \end{eqnarray*}
which yields the result. In order to conclude $(B)$ we give the proof of Lemma \ref{lemrec2}.
\begin{proof}[Proof of Lemma \ref{lemrec2}] We proceed by induction for $q\in \{2,..., i-1\}$. For $q=2$, (\ref{eqrec}) is satisfied. Assume that (\ref{eqrec}) is true for $q=2,...,i-1$ and prove it for $q+1$. Using (\ref{eqrec}) with $q$, we write
\begin{eqnarray*}\label{}
\displaystyle(\frac{\displaystyle\varphi_{2,m}}{\cos \theta_i})^2  &+&\sum_{l=2}^{i-1}\sin^2\theta_l (\frac{\displaystyle\varphi_{l+1,m}}{\cos \theta_i})^2
=(\frac{\displaystyle\varphi_{q,m}}{\cos \theta_i})^2  +\sum_{l=q}^{i-1}\sin^2\theta_l (\frac{\displaystyle\varphi_{l+1,m}}{\cos \theta_i})^2
\\&=&\cos^2\theta_q (\frac{\displaystyle\varphi_{q+1,m}}{\cos \theta_i})^2  +\sin^2\theta_q (\frac{\displaystyle\varphi_{q+1,m}}{\cos \theta_i})^2+\sum_{l=q+1}^{i-1}\sin^2\theta_l (\frac{\displaystyle\varphi_{l+1,m}}{\cos \theta_i})^2\\&=&(\frac{\displaystyle\varphi_{q+1,m}}{\cos \theta_i})^2  +\sum_{l=q+1}^{i-1}\sin^2\theta_l (\frac{\displaystyle\varphi_{l+1,m}}{\cos \theta_i})^2.
\end{eqnarray*}
Thus (\ref{eqrec}) is satisfied for $q+1$. This concludes the proof of Lemma \ref{lemrec2} and identity $(B)$.
\end{proof}
\bigskip

\noindent$\blacktriangleright${\bf \it Proof of $(C)$}: Consider $i\in \{2,...,m\}$. As for (\ref{n27}) we have:
$$<e_i,A_i e_1 >=<R_\theta e_i,  \frac{\partial R_\theta }{\partial \theta_i} e_1>.$$
Using (\ref{n151}) and (\ref{n151,7}) 
\begin{eqnarray}\label{n158}
<e_i,A_i e_1 >=\sin^2\theta_i   \varphi_{i+1,m}  \left(  \varphi_{2,i-1}^2+\sum_{k=2}^{i-1}\sin^2\theta_k \varphi_{k+1,i-1}^2
\right)+\cos^2 \theta_i  \varphi_{i+1,m}.
\end{eqnarray}

In order to transform the sum term in the previous identity, we make in the following a finite induction:

\begin{lem}\label{lemrec3} We have: $\forall q\in \{2,..., i\}$,
\begin{eqnarray}\label{equarec}
\varphi_{2,i-1}^2+\sum_{l=2}^{i-1}\sin^2\theta_l \varphi_{l+1,i-1}^2=\varphi_{q,i-1}^2+\sum_{l=q}^{i-1}\sin^2\theta_l \varphi_{l+1,i-1}^2.
\end{eqnarray}
\end{lem}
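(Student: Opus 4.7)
The plan is to prove Lemma \ref{lemrec3} by a finite induction on $q$ running from $q=2$ up to $q=i$, following precisely the same template as the proofs of Lemmas \ref{lemrec}, \ref{lemrec1} and \ref{lemrec2} just above. The base case $q=2$ is trivial: both sides of (\ref{equarec}) coincide by inspection.

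For the inductive step, I would assume that (\ref{equarec}) holds at some $q \in \{2,\ldots,i-1\}$ and upgrade it to $q+1$. The single algebraic input needed is the factorisation
\begin{equation*}
\varphi_{q,i-1} = \cos\theta_q \cdot \varphi_{q+1,i-1},
\end{equation*}
which follows directly from the definition (\ref{n151,5}) since $q \leq i-1$. Squaring, and then combining with the Pythagorean identity $\cos^2\theta_q+\sin^2\theta_q=1$, gives
\begin{equation*}
\varphi_{q,i-1}^2 + \sin^2\theta_q\, \varphi_{q+1,i-1}^2 = (\cos^2\theta_q+\sin^2\theta_q)\,\varphi_{q+1,i-1}^2 = \varphi_{q+1,i-1}^2.
\end{equation*}
Isolating the $l=q$ term from the sum in the right-hand side of (\ref{equarec}) at index $q$ and applying the identity above, I obtain
\begin{equation*}
\varphi_{q,i-1}^2+\sum_{l=q}^{i-1}\sin^2\theta_l\,\varphi_{l+1,i-1}^2 = \varphi_{q+1,i-1}^2+\sum_{l=q+1}^{i-1}\sin^2\theta_l\,\varphi_{l+1,i-1}^2,
\end{equation*}
which is exactly (\ref{equarec}) at index $q+1$, closing the induction.

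Applying the result at the terminal value $q=i$ (for which the sum on the right-hand side is empty by convention), the inductive identity collapses to $\varphi_{2,i-1}^2+\sum_{l=2}^{i-1}\sin^2\theta_l\,\varphi_{l+1,i-1}^2 = \varphi_{i,i-1}^2 = 1$, the last equality using the convention from (\ref{n151,5}). Substituting this back into (\ref{n158}) yields $\langle e_i, A_i e_1\rangle = \sin^2\theta_i\,\varphi_{i+1,m}+\cos^2\theta_i\,\varphi_{i+1,m}=\varphi_{i+1,m}$, which is identity $(C)$. There is no genuine obstacle in this argument: the proof is a purely mechanical trigonometric induction, completely parallel to Lemmas \ref{lemrec2}, \ref{lemrec1}, \ref{lemrec}, and the only care needed is bookkeeping of the empty-sum convention when $q=i$.
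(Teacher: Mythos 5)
Your proposal is correct and follows essentially the same route as the paper's proof: an upward finite induction on $q$ from $q=2$ to $q=i$, with the inductive step resting on the factorisation $\varphi_{q,i-1}=\cos\theta_q\,\varphi_{q+1,i-1}$ (valid for $q\le i-1$ by (\ref{n151,5})) together with the Pythagorean identity to absorb the $l=q$ term of the sum. Your closing remark that $q=i$ collapses the identity to $1$ and hence recovers $(C)$ via (\ref{n158}) is precisely how the paper uses the lemma.
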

\noindent {\bf Remark:} If $q=i$, the sum in the right hand side is naturally zero.\\
From (\ref{n158}) and (\ref{equarec}) with $q=i$ we get
\begin{eqnarray*}
 < R_\theta e_i,\frac{\partial R_\theta }{\partial \theta_i} e_1>=\sin^2\theta_i  \varphi_{i+1,m}+\cos^2\theta_i  \varphi_{i+1,m}=  \varphi_{i+1,m}.
\end{eqnarray*}
which yields the result. In order to conclude $(C)$ we give the proof of Lemma \ref{lemrec3}.
\begin{proof}[Proof of Lemma \ref{lemrec3}] We proceed by induction for $q\in \{2,..., i\}$. For $q=2$, (\ref{equarec}) is satisfied. Assume now that (\ref{equarec}) is true for $q=2,...,i-1$ and prove it for $q+1$. Using (\ref{equarec}) with $q$, we write

\begin{eqnarray*}\label{}
\varphi_{2,i-1}^2+\sum_{l=2}^{i-1}\sin^2\theta_l \varphi_{l+1,i-1}^2&=&\varphi_{q,i-1}^2+\sum_{l=q}^{i-1}\sin^2\theta_l \varphi_{l+1,i-1}^2\\
&=&\cos^2\theta_q \varphi_{q+1,i-1}^2+\sin^2\theta_q \varphi_{q+1,i-1}^2+\sum_{l=q+1}^{i-1}\sin^2\theta_l \varphi_{l+1,i-1}^2\\
&=&\varphi_{q+1,i-1}^2+\sum_{l=q+1}^{i-1}\sin^2\theta_l \varphi_{l+1,i-1}^2.
\end{eqnarray*}

Thus (\ref{equarec}) is satisfied for $q+1$. This concludes the proof of Lemma \ref{lemrec3}.
\end{proof}

ii)We recall from (\ref{R_theta}) that we have
$$\frac{\partial R_\theta}{\partial \theta_j}=R_2 \cdots R_{j-1} \frac{\partial R_j}{\partial \theta_j} R_{j+1} \cdots R_m,$$
so by (\ref{A_i}), $A_j$ is given explicitly by
$$A_j=R_m^{-1} R_{m-1}^{-1} \cdots R_{j}^{-1} \frac{\partial R_j}{\partial \theta_j} R_{j+1} \cdots R_m.$$
From a straightforward geometrical observation, we can see that the rotation conserves the euclidien norm in $\mathbb{R}^m$. For $\frac{\partial R_j}{\partial \theta_j} $, it can be seen as a composition of a projection on the plane $(e_1,e_j)$ and a rotation with angle $\theta_j+\frac{\pi}{2}$, which decreases the norm. This concludes the proof of Lemma \ref{prod}.

\noindent{\bf Address:}\\
Universit\'e de Cergy-Pontoise, 
Laboratoire Analyse G\'eometrie Mod\'elisation, \\
  CNRS-UMR 8088, 2 avenue Adolphe Chauvin
95302, Cergy-Pontoise, France.
\\ \texttt{e-mail: asma.azaiez@u-cergy.fr}\\
Universit\'e Paris 13, Institut Galil\'ee, Laboratoire Analyse G\'eometrie et Applications, \\
  CNRS-UMR 7539, 99 avenue J.B. Cl\'ement 93430, Villetaneuse, France.
\\ \texttt{e-mail: zaag@math.univ-paris13.fr}
\end{document}